\newcommand{\hidefigures}[2]{#1} 
\newcommand{\pushright}[1]{\ifmeasuring@#1\else\omit\hfill$\displaystyle#1$\fi\ignorespaces}
\newcommand{\pushleft}[1]{\ifmeasuring@#1\else\omit$\displaystyle#1$\hfill\fi\ignorespaces}
\newcommand{\obsolete}[1]{}
\newcommand{\nocolsep}{\arraycolsep=1.4pt\def\arraystretch{1}}
\newcommand{\refereq}[2]{\overset{\text{\tiny{#1}}}{#2}}%
\DeclareMathOperator\symdif{\triangle}%
\newcommand{\Integer}{\mathbb{Z}}
\newcommand{\Real}{\mathbb{R}}
\DeclareMathOperator{\supp}{supp}
\newcommand{\cstB}{B}
\newcommand{\cstD}{D}
\newcommand{\cstDprime}[1]{\cstD^\prime}
\newcommand{\xvar}{x}
\newcommand{\yvar}{y}
\newcommand{\zvar}{z}
\newcommand{\altxvar}{\xvar^\prime}
\newcommand{\altaltxvar}{\xvar^{\prime\prime}}
\newcommand{\altzvar}{\zvar^\prime}
\newcommand{\yvari}[1]{\yvar_{#1}}
\newcommand{\distfunction}{d}
\newcommand{\dist}[1]{\distfunction(#1)}
\newcommand{\xmeasurefunction}{\mu}
\newcommand{\xmeasure}[1]{\xmeasurefunction(#1)}
\newcommand{\xMeasure}[1]{\xmeasurefunction\left(#1\right)}
\newcommand{\bigxmeasure}[1]{\xmeasurefunction\big(#1\big)}
\newcommand{\ymeasurefunction}{\nu}
\newcommand{\ymeasurei}[1]{\ymeasurefunction_{#1}}
\newcommand{\pricevalueset}{P}
\newcommand{\profitfunction}{\mathbb{P}}%
\newcommand{\profit}[1]{\profitfunction(#1)}%
\newcommand{\profitdeltafunction}[1]{\profitfunction^{#1}}%
\newcommand{\profitdelta}[2]{\profitdeltafunction{#1}(#2)}%
\newcommand{\pricefunction}{p}
\newcommand{\pricei}[1]{\pricefunction_{#1}}
\newcommand{\pricevec}{\bm{\pricefunction}}
\newcommand{\profile}{\pricevec}
\newcommand{\altprofile}{\tilde{\profile}}
\newcommand{\altpricefunction}{\pricefunction^\prime}
\newcommand{\altpricei}[1]{\altpricefunction_{#1}}
\newcommand{\altaltprofile}{q}
\newcommand{\altaltprice}{\altaltprofile}
\newcommand{\altaltpricei}[1]{\altprice_{#1}}
\newcommand{\pricemin}{\pricefunction^{\min}}
\newcommand{\pricemax}{\pricefunction^{\max}}
\newcommand{\Smooth}{\lambda}%
\newcommand{\lipschitz}{L}%
\newcommand{\stronglyconvex}{M}%
\newcommand{\xmeasuremax}{\bar{\xmeasurefunction}}
\newcommand{\ubar}[1]{\text{\b{$#1$}}}
\newcommand{\xmeasuremin}{\ubar{\xmeasurefunction}}
\newcommand{\IdistLipschitz}{\lipschitz_{\distfunction}}
\newcommand{\Idist}{I_{\distfunction}}
\renewcommand{\altaltprice}{q}%
\renewcommand{\altaltpricei}[1]{\altaltprofile_{#1}}%
\renewcommand{\altaltprofile}{\profile'}%
\newcommand{\pricemdeltafunction}[1]{\pricefunction^{#1}}
\newcommand{\altpricemdeltafunction}[1]{\altpricefunction^{#1}}
\renewcommand{\xvar}{s}
\renewcommand{\yvar}{t}
\renewcommand{\pricevec}{p}
\renewcommand{\altpricefunction}{\tilde{p}}
\newcommand{\latchfunction}[2]{\xvar_{{#1}{#2}}}%
\newcommand{\latch}[3]{\latchfunction{#1}{#2}({#3})}%
\newcommand{\Intervalsymbol}{I}
\newcommand{\fullIntervalsymbol}{J}
\newcommand{\Intervalfunction}{\Intervalsymbol}
\newcommand{\Intervalifunction}[1]{\Intervalfunction_{#1}}
\newcommand{\Intervali}[2]{\Intervalifunction{#1}({#2})}
\newcommand{\bigIntervali}[2]{\Intervalifunction{#1}\big({#2}\big)}
\newcommand{\fullIntervalfunction}{\fullIntervalsymbol}
\newcommand{\fullIntervalifunction}[1]{\fullIntervalfunction_{#1}}
\newcommand{\fullIntervali}[2]{\fullIntervalifunction{#1}({#2})}%
\newcommand{\slot}{k}
\newcommand{\altslot}{i}
\newcommand{\altaltslot}{j}
\newcommand{\spongeslot}{\slot^\star}
\newcommand{\profilemdelta}[1]{\pricemdeltafunction{#1}}
\newcommand{\pricemdeltai}[2]{\pricemdeltafunction{#1}_{#2}}
\newcommand{\altprofilemdelta}[1]{\altpricemdeltafunction{#1}}
\newcommand{\vanishingfunction}{h}%
\newcommand{\vanishing}[1]{\vanishingfunction(#1)}%
\newcommand{\heavisidemvectorsymbol}{v}
\newcommand{\heavisidemvectori}[1]{\heavisidemvectorsymbol_{#1}}
\newcommand{\heavisidepvectorsymbol}{w}
\newcommand{\heavisidepvectori}[1]{\heavisidepvectorsymbol_{#1}}
\newcommand{\deltamax}{\delta^{\max}}
\newcommand{\MCspongefunction}{N}%
\newcommand{\MCtool}{K}%
\newcommand{\MCwave}{T}%
\newcommand{\loadmax}{\tau}
\newcommand{\sublevelsetsymbol}{\mathcal{L}}%
\newcommand{\sublevelsetfunction}{\sublevelsetsymbol}%
\newcommand{\sublevelsetifunction}[1]{\sublevelsetfunction_{#1}}%
\newcommand{\sublevelseti}[2]{\sublevelsetifunction{#1}({#2})}%
\newcommand{\csth}{h}
\newcommand{\cstDeltafunction}{f}%
\newcommand{\cstDelta}[1]{\cstDeltafunction({#1})}
\newcommand{\cstgain}{\gamma}
\newcommand{\xivariable}{\xi}
\newcommand{\altpriceifunction}[1]{\pi}
\renewcommand{\profitfunction}{R}
\newcommand{\pricemindeltafunction}{\pricemin}%
\newcommand{\pricemindelta}[1]{\pricemindeltafunction}
\renewcommand{\cstB}{B}
\newcommand{\cstBprime}[1]{\cstB^\prime}
\DeclareMathOperator{\OPTsymbol}{Opt}
\newcommand{\OPT}[1]{\OPTsymbol({#1})}
\newcommand{\OPTdeltafunction}[1]{\OPTsymbol^{#1}}
\newcommand{\OPTdelta}[2]{\OPTdeltafunction{#1}({#2})}
\colorlet{pricecolor}{white}
\colorlet{diffpricecolor}{white}
\colorlet{xvarcolor}{white}
\colorlet{ecolor}{white}
\colorlet{scolor}{white}
\colorlet{presentcolor}{cyan!70!blue}
\colorlet{pastcolor}{yellow!10!lime}
\colorlet{linepathcolor}{white}
\spnewtheorem{fact}{Fact}[section]{\bfseries}{\itshape}\crefname{fact}{Fact}{Facts} 
\renewcommand{\geq}{\geqslant}
\renewcommand{\leq}{\leqslant}
\def\ds{\displaystyle}
\newacro{OT}{optimal transport}
\newacro{DP}{dynamic programming}
\newacro{PTAS}{polynomial-time approximation scheme}
\newacro{FPTAS}{fully polynomial-time approximation scheme}
\newacro{ATP}{available to promise}
\newacro{CTP}{capable to promise}
\newacro{PTP}{profitable to promise}
\newacro{EDF}{\'{E}lectricit\'{e} de France}
\newacro{AHD}{attended home delivery}
\newacro{MNL}{multinomial logit}
\definecolor{tealblue}{HTML}{007050}
\definecolor{MyBlue}{HTML}{000070}
\definecolor{MyGreen}{HTML}{007070}
\definecolor{changes}{HTML}{0050B0}
\definecolor{maroon}{HTML}{c00050}
\newcommand{\old}[1]{}
\definecolor{Cloud}{HTML}{F5F5F5}
\definecolor{FMcolor}{HTML}{CCEEFF}
\newcommand{\OBoff}[1]{}
\begin{document}

\title{An Exact Method for a Problem of Time Slot Pricing}


\author{Olivier Bilenne \and Frédéric Meunier}

\institute{%
Olivier Bilenne \at
    Conservatoire national des arts et m\'etiers, C\'EDRIC\\ 
    Paris, France\\
    olivier-stephane.bilenne@lecnam.net
\and
Frédéric Meunier \at
    CERMICS, \'Ecole nationale des ponts et chaussées\\
    France\\
    frederic.meunier@enpc.fr
%
}

\date{}

\maketitle

\begin{abstract}
A company provides a service at different time slots, each slot being endowed with a capacity. A non-atomic population of users is willing to purchase this service. The population is modeled as a continuous measure over the preferred times. Every user looks at the time slot that minimizes the sum of the price assigned by the company to this time slot and the distance to their preferred time. If this sum is non-negative, then the user chooses this time slot for getting the service. If this sum is positive, then the user rejects the service.

We address the problem of finding prices that ensure that the volume of users choosing each time slot is below capacity, while maximizing the revenue of the company. For the case where the distance function is convex, we propose an exact algorithm for solving this problem in time $O(n^3|P|^3)$, where $P$ is the set of possible prices and $n$ is the number of time slots. For the case where the prices can be any real numbers, this algorithm can also be used to find asymptotically optimal solutions in polynomial time under mild extra assumptions on the distance function and the measure modeling the population.
\end{abstract}
\keywords{Pricing \and Optimal transport \and Dynamic programming \and Demand smoothing}
\subclass{90B50
}

\section{Introduction}

\subsection{Motivation} Consider a company offering a service to a large population of users at different time slots, each with a limited capacity. The company can assign a distinct price to each slot, aiming at simultaneously keeping the number of users served on each time slot below some threshold, and maximizing its profit. This is exactly the problem addressed in the present paper.

This work results from a collaboration with \ac{EDF}, the French electricity producer. In anticipation of the growing popularity of electric vehicles, EDF has explored various aspects of this topic. One of them is whether carefully designed pricing strategies could help smooth out demand at charging stations. More precisely, assuming a perfect forecast of the demand, is it possible to adjust the time slot prices so as to incentivize users to change the time of their recharge and to keep the number of users always below some threshold, while maximizing the revenue? More generally, the question of smoothing demand at charging stations belongs to the broader task of efficiently managing electricity consumption through dynamic pricing schemes.

This question actually looks relevant across diverse domains where users express preferences for receiving services at specific times, and their decisions are influenced by price and inconvenience incurred due to deviations from their preferred timing. Among these we find for instance ridesharing services, public transportation systems, streaming platforms, etc. along with all domains where pricing strategies can be used to match supply and demand while maximizing revenue.
 
In this work, we introduce a modeling approach for problems of this nature and an efficient (polynomial) strategy to address them. 

\subsection{Problem Formulation}
A company is providing a service at different time slots and a population of users is willing to benefit from this service. Each user is characterized by a preferred time $s$. The preferred times are modeled by a finite absolutely continuous measure $\mu$ over $\Real$ (which throughout the paper is endowed with the Lebesgue measure): for a measurable subset $A$ of $\Real$, the quantity $\mu(A)$ is the volume of users whose preferred time for being served is in $A$. The cost incurred by a user for being served at time $t$ while preferring time $s$ is modeled as $d(s-t)$, where $d$ is a strictly convex function (and thus continuous) with its unique minimum at $0$. 

Service availability is restricted to~$n$ distinct time slots $1, 2, \dots, n$. Each time slot~$j$ is characterized by a time of service~$t_j$ and a limited capacity~$\nu_j$: at time $t_j$, there is a maximum volume $\nu_j \in \Real_+$ of users that can be served.

The company assigns each time slot~$j$ a price $p_j$, taken from a given set $P$ of possible prices. The users are free to choose the time slot~$j$ over which they are served, or to choose not to be served at all. Yet, the assumption for the decision process is the following. Each user with preferred time $s$ considers a time slot~$j$ that minimizes the total cost $d(s-t_j) + p_j$: if the minimal value of the total cost is non-positive, then the user chooses to be served at time $t_j$; otherwise, the user chooses not to be served. 

The objective for the company is to maximize its revenue defined as the integral of the price paid by the users getting the service. We formulate the problem as the following mathematical program:
\begin{equation}\label{upper-level}\tag{U}
\begin{array}{r@{\qquad}l@{\quad}l}
\underset{\hspace{-10mm} p_1,\ldots,p_n\hspace{-10mm}}{\text{maximize}} & \ds{\sum_{j=1}^n p_j} \mu(I_j(p))
\\
\text{subject to} 
 & \mu(I_j(p)) \leq \nu_j  & \forall j \in [n]
 \\ & p_j \in P & \forall j \in [n] \, ,
\end{array}
\end{equation}
where $I_j(p) = \bigl\{s\in\Real \colon d(s-t_j)+p_j \leq \min(0,d(s-t_k)+p_k) \,\, \forall k \in [n]\bigl\}$ is formed by the users choosing time slot~$j$ for getting the service, and~$\mu(I_j(p))$ forms the service load at~$j$.  We denote by~$\OPT{P}$ the optimal value of the revenue, with an emphasis on its dependency on the price set~$P$. 

Formally, this problem is {\em bilevel}: there is the optimization problem of the company (upper level), but the users, by choosing a time slot or deciding not to be served, are also solving their own elementary optimization problem (lower level). Regarding \eqref{upper-level} as a bilevel program, it is not difficult to see that optimistic and pessimistic interpretations of the problem
---which would respectively assume cooperation and non-cooperation of the users with the company---coincide, due to the 
strict convexity of the fonction~$d$ and  the non-atomic nature of the measure $\mu$. This ensures that the interiors of the sets~$I_1,\dots,I_n$ do not overlap and that problem~\eqref{upper-level} is well formulated. 
See, e.g.,~\cite{colson2007overview} for further discussions about bilevel programming. 

We consider two special cases of the problem:
\begin{itemize}
    \item the case where $P$ is finite.
    \item the case where $P$ is $\Real$, $\mu$ has bounded support, and $d$ is strongly convex.
\end{itemize}

\begin{remark}
The users actually solve what is called in optimal transport theory a {\em semi-discrete transport problem} (see, e.g.,~\cite{santambrogio15}). It is well-known that the dual variables of such problems provide prices that lead to an ``automatic'' satisfaction of the capacity constraints. However, the presence of the revenue as an objective function makes unlikely that straightforward adaptations of results from optimal transport theory could lead to optimal solutions of~\eqref{upper-level}.
\end{remark}

\subsection{Contributions}

Our contributions are mainly algorithmic. We assume that the following operations take constant time: computing the value of $d$ at any point;
inverting and minimizing functions of the form $s\mapsto d(s)$ or $s\mapsto d(s)-d(s-t)$;
computing the value of $\mu$ on any interval. 
It is worth noting that the strict convexity of function~$d$ lends plausibility to the computational assumption; 
in particular, $s \mapsto d(s) - d(s-t)$ is increasing (see \cref{fact:increasing} below).
Under this computation assumption, we prove two theorems. The first one is concerned with finite price sets.

\begin{theorem}\label{thm:semi-disc}
When $P$ is finite, the problem can be solved in time $O(n^3|P|^3)$.
\end{theorem}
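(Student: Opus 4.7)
Assume without loss of generality that the slots are sorted in time as $t_1 < t_2 < \cdots < t_n$. The plan is to set up a dynamic program whose state encodes the pair of consecutive active slots most recently "committed", together with their prices. First I would establish the geometry of the sets $I_j(p)$: by strict convexity of $d$ and \cref{fact:increasing}, for any $p$ each $I_j(p)$ is a (possibly empty) interval $[a_j, b_j]$, and the intervals appear in the same order as the $t_j$'s. Writing $[l_j, r_j] = \{s : d(s-t_j) + p_j \leq 0\}$ for the individual feasibility range of slot $j$ (with $l_j \leq t_j \leq r_j$), and $\sigma_{i,j}(p_i, p_j)$ for the unique crossover solving $d(s-t_i)+p_i = d(s-t_j)+p_j$, one has $a_j = \max(l_j, \sigma_{i,j}(p_i, p_j))$ and $b_j = \min(r_j, \sigma_{j,k}(p_j, p_k))$, where $i < j < k$ are consecutive active slots. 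By the computation assumptions, all of $l_j, r_j, \sigma_{i,j}(\cdot,\cdot)$ and $\mu$-measures of intervals are obtainable in $O(1)$.

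For $0 \leq k < i \leq n$ and $p_k, p_i \in P$ (with $k=0$ encoding "no predecessor"), define
\[
f(k, p_k, i, p_i) = \text{maximum revenue from slots } 1, \ldots, k,
\]
the max being over price vectors in $P^n$ such that the two rightmost active slots in $\{1, \ldots, i\}$ are $k$ and $i$ with prices $p_k, p_i$; the contribution of slot $i$ is deferred, since its right boundary is not yet known. The recurrence
\[
f(k, p_k, i, p_i) = \max_{0 \leq l < k,\, p_l \in P} \bigl\{ f(l, p_l, k, p_k) + p_k \cdot \mu([a_k, b_k]) \bigr\}
\]
fixes slot $k$'s contribution upon learning its two neighbours, with $a_k = \max(l_k, \sigma_{l,k}(p_l, p_k))$, $b_k = \min(r_k, \sigma_{k,i}(p_k, p_i))$, and convention $\sigma_{0,k} := -\infty$; transitions violating $\mu([a_k, b_k]) \leq \nu_k$ are discarded. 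The optimum is then
\[
\OPT{P} = \max\bigl\{ f(k, p_k, i, p_i) + p_i \cdot \mu([a_i, r_i]) : \mu([a_i, r_i]) \leq \nu_i \bigr\},
\]
alongside the trivial configuration of value $0$.

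The complexity follows by direct count: the table has $O(n^2|P|^2)$ entries, each recurrence enumerates $O(n|P|)$ predecessors, and every operation is $O(1)$, yielding $O(n^3|P|^3)$. The main obstacle I anticipate is correctness, specifically the two-way correspondence between DP-enumerated configurations and feasible price vectors in $P^n$: one must verify that any slot $m$ "skipped" between two active slots $(k, i)$ with prices $(p_k, p_i)$ admits some $p_m \in P$ rendering $I_m = \emptyset$. Since this consistency check depends only on the tuple $(k, p_k, i, p_i, m)$, it can be precomputed within the $O(n^3|P|^3)$ budget, preserving the overall bound.
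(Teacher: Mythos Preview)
Your approach is essentially the same as the paper's. The paper phrases it as a longest-path computation in the line digraph $L(D)$ of a DAG $D$ whose vertices are pairs $(j,q)\in[n]\times P$ (plus a source and a sink); your DP state $(k,p_k,i,p_i)$ is precisely an arc of $D$, i.e., a vertex of $L(D)$, and your transition is an arc of $L(D)$ carrying exactly the reward $w(a,a')$ the paper defines. The $O(n^3|P|^3)$ count is identical.

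The one place the paper is more explicit is the consistency check for skipped slots, which it formalizes as a \emph{covering} condition: a skipped slot $m$ between consecutive path vertices $(k,p_k)$ and $(i,p_i)$ must satisfy $d(s_{ki}-t_k)+p_k\le d(s_{ki}-t_m)+\max P$ at the crossover $s_{ki}$, which is equivalent to the full interval $J_m$ collapsing when $p_m=\max P$. Your phrasing ``some $p_m\in P$ renders $I_m=\emptyset$'' is a slightly weaker requirement (it allows $J_m$ nonempty provided the sublevel set $\{s:d(s-t_m)+p_m\le 0\}$ misses it), and you should check two things you left implicit: first, that $p_m=\max P$ is always the best candidate, so the existential quantifier costs nothing; second, that under this weaker condition your boundary formula $b_k=\min(r_k,\sigma_{k,i})$ is still correct---it is, because $I_m=\emptyset$ with $J_m$ nonempty forces $r_k<\sigma_{k,m}<\sigma_{k,i}$, but this deserves a line of argument. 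The paper sidesteps the issue by taking ``active'' to mean $J_j$ has nonzero length rather than $I_j\neq\emptyset$; either bookkeeping works.
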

Our second main result addresses the case when $P$ is $\Real$.
The main message is that, under mild assumptions, we can efficiently find solutions within arbitrary optimality gaps. 
%
Suppose that $\mu$ has a bounded support, and for $\delta>0$ consider the countable set~$\delta\Integer$. 
Since~$\delta\Integer$ is a subset of~$\Real$, we have $\OPT{\delta\Integer} \leq \OPT{\Real}$. 
Using \cref{thm:semi-disc}, we show that the computation of the lower bound $\operatorname{LB}(\delta)=\OPT{\delta\Integer}$ can be done in time $O(\frac 1 {\delta^3} n^3)$ (see \cref{sec:lowerbound}), and the next theorem claims that a close upper bound can be computed with the same time complexity at the price of a small extra assumption (see \cref{sec:upperbound}). This upper bound only depends on the support of~$\xmeasurefunction$, the lower and upper bounds on~$\xmeasurefunction$, the function~$\distfunction$, and the times~$\yvari{1},\dots,\yvari{n}$, and is independent of the values given to the measures~$\xmeasurefunction$ and~$\ymeasurefunction$.

\begin{theorem}\label{thm:cont}
Assume that~$\distfunction$ is strongly convex and the density of~$\xmeasurefunction$ is interval supported and lower- and upper-bounded above zero. Then an upper bound~$
\operatorname{UB}(\delta)
$ on $\OPT{\Real}$ can be computed in time $O(\frac 1 {\delta^3} n^3)$, such that $
\operatorname{UB}(\delta)-\operatorname{LB}(\delta)
= O(\delta^{1/8})$.
\end{theorem}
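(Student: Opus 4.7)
The plan is to define $\operatorname{UB}(\delta)$ as the value of a slightly relaxed $\delta\Integer$-grid problem (with inflated capacities and possibly a price restriction), chosen so that (i) it dominates $\OPT{\Real}$ by a controlled rounding argument and (ii) its gap with $\operatorname{LB}(\delta)=\OPT{\delta\Integer}$ is governed by a tuning parameter $\eta=\eta(\delta)$ that we set to balance two competing error terms.

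First, I would establish sensitivity estimates for the demand $p\mapsto\xmeasure{\Intervali{j}{p}}$. Combining the $M$-strong convexity of $\distfunction$ with the uniform upper bound on the density of $\xmeasurefunction$, the inner boundaries $\distfunction(s-\yvari{j})+\pricei{j}=\distfunction(s-\yvari{k})+\pricei{k}$ move Lipschitz in $p$ with constants controlled by $\min_{j\neq k}|\yvari{j}-\yvari{k}|$ (since $d'(s-\yvari{j})-d'(s-\yvari{k})$ is bounded away from $0$ by strong convexity). The outer boundaries $\distfunction(s-\yvari{j})+\pricei{j}=0$ are the delicate ones: because $|\distfunction'(x)|\geq M|x|$ and $|s_\pm-\yvari{j}|\asymp\sqrt{(\pricemax-\pricei{j})/M}$ near the minimum of $\distfunction$, the outer boundary sensitivity is $|ds_\pm/d\pricei{j}|\leq 1/\sqrt{2M(\pricemax-\pricei{j})}$. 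Thus, if all prices stay below $\pricemax-\eta$, the demand at each slot is Lipschitz in $p$ with constant $O(1/\sqrt{\eta})$.

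Second, I would construct a continuous-to-grid rounding. Take a feasible $p\in[\pricemin,\pricemax]^n$ and round every $\pricei{j}$ \emph{up} to the nearest point $\tilde\pricei{j}\in\delta\Integer$. A uniform upward shift of all prices weakly shrinks every $\Intervali{j}{\cdot}$, so demand moves controllably; combined with Step~1, prices in $[\pricemin,\pricemax-\eta]$ incur a per-slot demand overshoot of at most $C\delta/\sqrt{\eta}$ after rounding, while prices in $(\pricemax-\eta,\pricemax]$ concern intervals of width $O(\sqrt\eta)$ and thus carry at most $O(\sqrt\eta)$ revenue. Set $\operatorname{UB}(\delta)$ to be the optimum of the $\delta\Integer$-grid problem with capacities inflated to $\ymeasurei{j}+C\delta/\sqrt\eta$, plus an explicit constant $C'\sqrt\eta$ to account for the restricted regime. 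The rounding argument then certifies $\operatorname{UB}(\delta)\geq\OPT{\Real}$.

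Third, the main obstacle is bounding $\operatorname{UB}(\delta)-\operatorname{LB}(\delta)$. The revenue gain from inflating each capacity by $C\delta/\sqrt\eta$ and from the added constant $C'\sqrt\eta$ must be bounded only in terms of $\delta$ and $\eta$, using the structural properties of $\distfunction$ and $\xmeasurefunction$ rather than the (unknown) optimal solution. A naive load-times-maximum-price bound yields $\operatorname{UB}(\delta)-\operatorname{LB}(\delta)=O(\sqrt\eta+\delta/\sqrt\eta)$, which balances at $\eta=\delta$ and gives only a $\sqrt\delta$ gap. Obtaining the sharper exponent $\delta^{1/8}$ requires a more careful argument, for instance decomposing the revenue gain from the inflated capacity into a Lagrangian contribution (where the effective marginal price is itself $O(\sqrt\eta)$ because of strong convexity and the outer-boundary structure) times the excess load. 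Plugging such a refined marginal-price estimate into the balance produces terms of order $\eta+\delta/\sqrt\eta$ and $\sqrt\eta\cdot(\delta/\sqrt\eta)^{1/2}$, and tuning $\eta$ then yields the announced $\delta^{1/8}$ gap.

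Finally, the complexity follows from \cref{thm:semi-disc} applied to the restricted grid $[\pricemin,\pricemax-\eta]\cap\delta\Integer$, which has cardinality $O(1/\delta)$, giving an overall running time of $O(n^{3}/\delta^{3})$. I expect Step~3 (the refined sensitivity of the $\delta$-grid optimum to capacity inflation that yields the exponent $1/8$ rather than $1/2$) to be the delicate part, as it is the one requiring the strong-convexity and interval-support assumptions to be fully exploited beyond their qualitative use in Steps~1--2.
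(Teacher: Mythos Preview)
Your overall architecture is right: define $\operatorname{UB}(\delta)$ as the optimum of a $\delta\Integer$-grid problem with inflated capacities, certify $\operatorname{UB}(\delta)\geq\OPT{\Real}$ by rounding, and bound the gap with $\operatorname{LB}(\delta)$. Your sensitivity estimates in Step~1 are also essentially correct and match the paper's \cref{lemma:twolemmasinone,lemma:sublevelsets}. But your Step~3 is a genuine gap, not a technicality, and the $\eta$-parametrization you introduce in Step~2 is not how the paper proceeds.

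The paper does not restrict to prices below $\pricemax-\eta$. Instead it uses the $\frac12$-H\"older bound on sublevel sets directly (your own Step~1 observation) to get a fixed capacity inflation of order $\sqrt{\delta}$; there is no tuning parameter. The hard part is then exactly what you flagged: showing that an optimal profile $p$ of the relaxed problem can be turned into a \emph{feasible} profile $\tilde p$ of the original problem with $\|p-\tilde p\|_\infty=O(\delta^{1/4})$. Your ``Lagrangian contribution / effective marginal price'' suggestion does not supply this. The paper's mechanism is constructive and uses optimality of $p$ in a specific way: if you raise \emph{all} prices simultaneously by an amount $N\delta\asymp\delta^{1/4}$, then either all demand vanishes, or (by comparing revenues and using optimality of $p$) there must exist a ``sponge'' slot $k^\star$ whose load drops by at least $h\sqrt{\delta}$ for any prescribed constant $h$. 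One then repairs feasibility at the remaining slots by sweeping inward from both ends toward $k^\star$, raising blocks of prices; the cumulative spillover lands at $k^\star$, where enough slack was created to absorb it. The total price movement is $O(\delta^{1/4})$, and the $\frac12$-H\"older revenue dependence on prices then yields the $O(\delta^{1/8})$ gap.

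Without something playing the role of this sponge-slot argument, your Step~3 cannot close: a generic sensitivity-of-the-optimum-to-capacity bound will not beat $O(\sqrt{\delta})$, as you yourself compute. The exponent $1/8$ arises precisely from the chain $\sqrt{\delta}\text{ (capacity slack)}\to\delta^{1/4}\text{ (price repair via optimality)}\to\delta^{1/8}\text{ (revenue via H\"older)}$, and the middle step is the one your proposal is missing.
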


We close this section by emphasizing that our algorithms are conceptually simple and easy to implement; the length of the paper is mostly due to the proofs, which require some work.


\subsection{Literature Review}
\label{sec:stateoftheart}

Many optimization problems focusing on time slot pricing are addressed from the perspective of revenue management, for which a rich literature exists (see, e.g.,~\cite{van2005introduction} for an introduction to revenue management).
Several studies have also examined the question of time slot pricing, in particular in the context of \ac{AHD} where, usually, the users are assumed to enter the time slot booking systems sequentially according to a stochastic process 
and the revenue maximization problem is solved heuristically (see, e.g.,~\cite{akkerman22} and references therein). A common trend in \ac{AHD} is to develop online solutions based on stochastic dynamic programming, which break down the pricing problem into sub-problems solved recursively for the user arrival times~\cite{asdemir2009dynamic,lebedev2021dynamic,strauss2021dynamic,yang17}.
Other such studies explore the specific context of charging stations for electric vehicles from diverse angles. 
In~\cite{kazemtarghi24}, for instance, the pricing problem at charging stations is modeled as a linear program for revenue maximisation and solved by an interior point method. In~\cite{xiong16}, time slot pricing is seen as a congestion game for social cost minimization. Bayesian stochastic models have also been considered for revenue maximization at charging stations and load balancing, in combination with reinforcement learning~\cite{zhang22} or with genetic algorithms~\cite{kalakanti25,moghaddam20}. A genetic algorithm is used in~\cite{liu23} as well to solve a pricing problem modeled as multi-objective optimization. An approach in spirit close to our work is found in~\cite{anjos2024integrated}, where the time slot pricing of charging stations is formulated as a bilevel mathematical program for the service provider (upper-level problem) and for the users (lower-level problem), and solved heuristically by integer linear programming after single-level reformulation. 

In addition to the diversity of settings and methods of solution, the litterature on time slot pricing also differs in the way the users are assumed to choose their favourite times slots. The conventions typically used in the literature include rank-based models~\cite{akkerman22}, preference lists with maximal price thresholds~\cite{anjos2024integrated}, and probabilistic models such as the \acl{MNL}~\cite{asdemir2009dynamic,lebedev2021dynamic,strauss2021dynamic,yang17}.
In this paper, the preferences of each user are derived from the distances to the time slots of their ideal time for getting the service. Under this modeling approach, the lower-level problem of time slot selection is reminiscent of the 
optimal transport problem on the real line. This analogy lends the time slot pricing problem the optimal structure of dynamic programming, allowing for exact pricing algorithms as computationally attractive as the online heuristics.

As already pointed out, the idea of using pricing to adjust demand to capacity while maximizing  revenue is especially present for electricity consumption; see, e.g., two surveys on this topic~\cite{crew95,chinhuihao24}. However, we are unaware of any research that addresses the problem examined in this work. In fact, papers proposing exact polynomial algorithms for practically relevant problems related to demand adjustment and revenue maximization---our motivation---are scarce in the literature.


%
\section{Preliminary Results}
\label{sec:preliminaries}

We start with a natural fact in convex analysis, which we state as an observation for future references.

\begin{fact}\label{fact:increasing}
   The map $s \mapsto d(s-t_j) - d(s-t_k)$ is increasing for $j < k$.
\end{fact}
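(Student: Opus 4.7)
The plan is to reduce \cref{fact:increasing} to a single application of the strict convexity of $d$, through the standard four-point inequality for convex functions. Under the natural convention that the time slots are indexed in increasing order of service time (so $t_j < t_k$ whenever $j<k$; this is implicit in the statement), the claim is equivalent, after rearrangement, to
$$d(s_1 - t_j) + d(s_2 - t_k) < d(s_1 - t_k) + d(s_2 - t_j) \qquad \text{for all } s_1 < s_2.$$

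The key observation driving the proof is the sum identity
$(s_1 - t_j) + (s_2 - t_k) = (s_1 - t_k) + (s_2 - t_j)$, combined with the strict inequalities $s_1 - t_k < s_1 - t_j$ and $s_2 - t_k < s_2 - t_j$. This means that the two ``inner'' arguments $s_1 - t_j$ and $s_2 - t_k$ can each be written as a strict convex combination of the two ``outer'' arguments $s_1 - t_k$ and $s_2 - t_j$, with complementary weights $\lambda$ and $1-\lambda$, where
$$\lambda \;=\; \frac{t_k - t_j}{(t_k - t_j) + (s_2 - s_1)} \;\in\; (0,1).$$
An elementary check (the only routine computation in the proof) confirms that
$s_1 - t_j = (1-\lambda)(s_1 - t_k) + \lambda (s_2 - t_j)$ and $s_2 - t_k = \lambda (s_1 - t_k) + (1-\lambda)(s_2 - t_j)$.

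Since $s_1 - t_k \neq s_2 - t_j$ (their difference is $(s_2 - s_1) + (t_k - t_j) > 0$), strict convexity of $d$ applied to each of the two convex combinations yields two strict inequalities; adding them, the weights on the right-hand side collapse to $\lambda + (1-\lambda) = 1$, producing exactly the desired inequality. There is no genuine obstacle here: the argument is one line of strict convexity once the convex-combination identity above is in hand.
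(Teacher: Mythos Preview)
Your argument is correct: the four-point inequality you derive is exactly the submodularity characterization of strict convexity, and your computation of the complementary weights $\lambda$ and $1-\lambda$ checks out (both inner points lie strictly between $s_1-t_k$ and $s_2-t_j$, and the sums match, so the weights must be complementary). Adding the two strict-convexity inequalities gives the desired strict inequality with no gap.

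The paper, for its part, gives no proof at all: \cref{fact:increasing} is simply stated as ``a direct consequence of the strict convexity of $d$'' and left to the reader. Your proposal is thus a full proof where the paper offers none. One minor remark: you rely on the convention $t_j < t_k$ for $j<k$; the paper does use the $t_j$ as ordered time slots throughout (see for instance the definition of $L$ in \cref{sec:algoPisR}, or the proof of \cref{lemma:monotonicity}, which deduces $k\leq k'$ from exactly this monotonicity), so your reading is the intended one, but the paper never makes the ordering fully explicit before this point.
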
 
%
This a direct consequence of the strict convexity of~$d$. Alternatively, the statement follows from the monotonicity of secant slopes of convex functions; see, e.g., \cite[Section~4.1]{bertsekas2003convex}. 

It is convenient in the problem~\eqref{upper-level} to rewrite the sets~$\Intervali{j}{\profile}$ as
\begin{equation}
\label{intersections:a}
\Intervali{j}{\profile} 
=
\fullIntervali{j}{\profile} 
\cap
\sublevelseti{j}{\pricei{j}} 
\quad\forall \profile\in P^n,\ j\in[n]
\, ,
\end{equation}
where $
\fullIntervali{j}{\profile} 
=
\{\xvar\in\Real \colon \dist{\xvar-\yvari{j}} +  \pricei{j} \leq \dist{\xvar-\yvari{k}} + \pricei{k}   \,\, \forall  k \in [n] \}
$,
and
$
 \sublevelseti{j}{\altaltprice} 
 = \{ \xvar \in \Real\colon\dist{\xvar-\yvari{j}}+ \altaltprice \leq 0 \}
$ denotes the sublevel set of~$\xvar\mapsto\dist{\xvar-\yvari{j}}$.
The upcoming result  is well known in the context of optimal transport; see, e.g., \cite[Theorem 2.9]{santambrogio15} which, together with~\cite[Theorem 2.5]{santambrogio15}, characterizes the solutions of a convex optimal transport problem.

\begin{lemma}[Monotonicity]\label{lemma:monotonicity}
Let $\profile
\in  \pricevalueset^n$ and $\xvar,\altxvar \in \Real$. If $\xvar<\altxvar$, then $k \leq k'$ for all $k,k'\in[n]$ such that~$\xvar\in\fullIntervali{k}{\profile}$ and~$\altxvar\in\fullIntervali{k'}{\profile}$.
\end{lemma}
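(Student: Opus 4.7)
The plan is to argue by contradiction using \cref{fact:increasing}. Suppose we have $\xvar<\altxvar$ with $\xvar\in\fullIntervali{k}{\profile}$ and $\altxvar\in\fullIntervali{k'}{\profile}$, but $k>k'$.

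From $\xvar\in\fullIntervali{k}{\profile}$, applied at index~$k'$, I would extract
\[
\dist{\xvar-\yvari{k}}+\pricei{k}\leq \dist{\xvar-\yvari{k'}}+\pricei{k'},
\]
which rearranges to $\pricei{k}-\pricei{k'}\leq \dist{\xvar-\yvari{k'}}-\dist{\xvar-\yvari{k}}$. Symmetrically, from $\altxvar\in\fullIntervali{k'}{\profile}$ applied at index~$k$, I would obtain $\pricei{k}-\pricei{k'}\geq \dist{\altxvar-\yvari{k'}}-\dist{\altxvar-\yvari{k}}$. Chaining the two inequalities gives
\[
\dist{\altxvar-\yvari{k'}}-\dist{\altxvar-\yvari{k}}\;\leq\;\dist{\xvar-\yvari{k'}}-\dist{\xvar-\yvari{k}}.
\]

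The final step is then to invoke \cref{fact:increasing} with the pair $(k',k)$ (since we are assuming $k'<k$): the map $\xvar\mapsto \dist{\xvar-\yvari{k'}}-\dist{\xvar-\yvari{k}}$ is strictly increasing, so $\xvar<\altxvar$ forces the strict reverse inequality, which contradicts the display above. I do not foresee any real obstacle here; the only care needed is to orient the two membership inequalities consistently so that both sides of the contradiction involve the same difference $\dist{\cdot-\yvari{k'}}-\dist{\cdot-\yvari{k}}$ in the correct direction.
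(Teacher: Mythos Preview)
Your proof is correct and essentially identical to the paper's: both extract the two membership inequalities, combine them (you chain through $\pricei{k}-\pricei{k'}$, the paper adds them directly), and conclude via \cref{fact:increasing}. The only cosmetic difference is that you phrase it as a contradiction while the paper states the conclusion directly.
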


\begin{proof}
Let $k,k'\in[n]$, $\xvar\in\fullIntervali{k}{\profile}$, $\altxvar\in\fullIntervali{k'}{\profile}$, and assume that $\xvar<\altxvar$. 
By definition of~$\fullIntervali{k}{\profile}$ and~$\fullIntervali{k'}{\profile}$, we have
$
\dist{\xvar-\yvari{k}}+\pricei{k}\leq \dist{\xvar-\yvari{k'}}+\pricei{k'}
$ and $
\dist{\altxvar-\yvari{k'}}+\pricei{k'} \leq \dist{\altxvar-\yvari{k}}+\pricei{k}
$.
Adding up the previous two inequalities yields
$
\dist{\xvar-\yvari{k}}+\dist{\altxvar-\yvari{k'}}\leq \dist{\xvar-\yvari{k'}}+ \dist{\altxvar-\yvari{k}}
$, 
which rewrites as
$
\dist{\xvar-\yvari{k}} - \dist{\xvar-\yvari{k'}} \leq  \dist{\altxvar-\yvari{k}} - \dist{\altxvar-\yvari{k'}}
$. It follows from \cref{fact:increasing} that $k \leq k'$.
\qed\end{proof}
\begin{remark}\label{remark:monocity}
 The last inequality in the proof can be interpreted as a continuous analogue of the Monge property for matrices (see, e.g., \cite{burkard96}), which in discrete optimization implies monotone optimal assignments.
\end{remark}

\cref{prop:intervals} shows that the sets $ \Intervali{j}{\profile}  $, $\fullIntervali{j}{\profile}$, $ \sublevelseti{j}{q} $ follow a particular arrangement. 
Its proof relies on~\cref{lemma:monotonicity}. Notice that any of these sets can be empty.

\begin{proposition}\label{prop:intervals}
    The sets $ \Intervali{j}{\profile}  $, $\fullIntervali{j}{\profile}$, $ \sublevelseti{j}{q} $ are closed intervals for every $j \in [n]$ and every $\profile \in P^n$ and $q\in P$. 
    Moreover, for every $\profile \in P^n$, the sets $\fullIntervali{1}{\profile},\dots,\fullIntervali{n}{\profile}$ cover~$\Real$, have pairwise disjoint interiors, and are ordered by increasing index~$j$. 
\end{proposition}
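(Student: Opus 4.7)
The plan breaks into three parts: (i) each of the sets $\sublevelseti{j}{\altaltprice}$, $\fullIntervali{j}{\profile}$, and $\Intervali{j}{\profile}$ is a closed interval; (ii) the family $\fullIntervali{1}{\profile},\ldots,\fullIntervali{n}{\profile}$ covers $\Real$ and has pairwise disjoint interiors; (iii) these sets are ordered by increasing index. The work relies almost entirely on \cref{fact:increasing} and \cref{lemma:monotonicity}, with only some care needed for empty cases.

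For (i), the set $\sublevelseti{j}{\altaltprice}$ is the sublevel set of the strictly convex continuous function $\xvar\mapsto\dist{\xvar-\yvari{j}}$ at level $-\altaltprice$, hence a closed convex subset of $\Real$ and therefore a closed interval (possibly empty, possibly all of $\Real$). For $\fullIntervali{j}{\profile}$, I would rewrite each defining inequality as $\dist{\xvar-\yvari{j}}-\dist{\xvar-\yvari{k}}\leq\pricei{k}-\pricei{j}$. By \cref{fact:increasing}, the left-hand side is strictly increasing in $\xvar$ when $k>j$, so the constraint carves out a closed set of the form $(-\infty,b_k]$ (or $\Real$, or $\emptyset$), while for $k<j$ it is strictly decreasing, yielding one of the form $[a_k,+\infty)$; the constraint for $k=j$ is trivial. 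The intersection of such closed half-lines is a closed interval. Finally, $\Intervali{j}{\profile}=\fullIntervali{j}{\profile}\cap\sublevelseti{j}{\pricei{j}}$ is the intersection of two closed intervals.

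For the covering in (ii), any $\xvar\in\Real$ belongs to $\fullIntervali{j}{\profile}$ whenever $j$ achieves the minimum of $\dist{\xvar-\yvari{k}}+\pricei{k}$ over $k\in[n]$, and such a minimizer exists because $[n]$ is finite. For the disjointness of interiors, suppose a point $\xvar$ lies in the interior of $\fullIntervali{k}{\profile}\cap\fullIntervali{k'}{\profile}$ with $k<k'$. Then a small open interval around $\xvar$ is contained in both sets, and one can pick $\xvar_1<\xvar_2$ inside it with $\xvar_1\in\fullIntervali{k'}{\profile}$ and $\xvar_2\in\fullIntervali{k}{\profile}$; \cref{lemma:monotonicity} then forces $k'\leq k$, a contradiction.

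For (iii), fix $j<j'$ with $\fullIntervali{j}{\profile}$ and $\fullIntervali{j'}{\profile}$ both nonempty and pick $\xvar\in\fullIntervali{j}{\profile}$, $\altxvar\in\fullIntervali{j'}{\profile}$. If it were the case that $\altxvar<\xvar$, then applying \cref{lemma:monotonicity} to $\altxvar<\xvar$ with $\altxvar\in\fullIntervali{j'}{\profile}$ and $\xvar\in\fullIntervali{j}{\profile}$ would give $j'\leq j$, contradicting $j<j'$. Hence $\xvar\leq\altxvar$, so $\fullIntervali{j}{\profile}$ lies entirely to the left of $\fullIntervali{j'}{\profile}$, which together with (ii) completes the proof. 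The argument is routine once \cref{fact:increasing} and \cref{lemma:monotonicity} are in hand; the only mild nuisance is keeping empty intervals out of the way, which is handled by checking that the relevant claims are vacuously true in those cases.
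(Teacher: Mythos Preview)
Your proof is correct and follows essentially the same approach as the paper: both use \cref{fact:increasing} to show the defining half-spaces are intervals (hence so are the intersections), use the existence of a minimizer over the finite set $[n]$ for the covering, and invoke \cref{lemma:monotonicity} for the disjointness of interiors and the ordering. The paper is terser, simply stating that the last two claims are consequences of \cref{lemma:monotonicity}, whereas you spell out the contradiction arguments explicitly.
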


\begin{proof}
    The sets $ \Intervali{j}{\profile}  $, $\fullIntervali{j}{\profile}$, $ \sublevelseti{j}{q} $ are closed as they contain all their limit points. 
    For each $k\in[n]$, the set $\{\xvar\in\Real\colon \dist{\xvar-\yvari{j}}+\pricei{j} \leq \dist{\xvar-\yvari{k}}+\pricei{k} \}$ is an 
    interval by the monotonicity of $\xvar\mapsto\dist{\xvar-\yvari{j}}-\dist{\xvar-\yvari{k}}$ (\cref{fact:increasing}).
    Hence, $\fullIntervali{j}{\profile}$ is an 
    interval as the intersection of all these sets for $k$ ranging over $[n]$.
    The set~$ \sublevelseti{j}{\profile} $ is an 
    interval as the sublevel set of a convex function. 
    It follows from~\eqref{intersections:a} that~$\Intervali{j}{\profile}$ is an 
    interval, as intersection of two 
    intervals. 

    Further, $\xvar\in\Real$ belongs to~$\fullIntervali{j}{\profile}$ whenever~$j$ minimizes $\dist{\xvar-\yvari{j}}+\pricei{j}$ over~$[n]$.
    Since every finite set has minimal elements, at least one such~$j$ exists, and therefore the sets $\fullIntervali{1}{\profile},\dots,\fullIntervali{n}{\profile}$ cover~$\Real$.
    The last two claims are consequences of \cref{lemma:monotonicity}.
\qed\end{proof}
See \cref{fig:intervals} for an illustration of \cref{prop:intervals} and its relation with the service loads at the time slots.

\begin{figure}[ht]
\begin{center}
\includegraphics[]{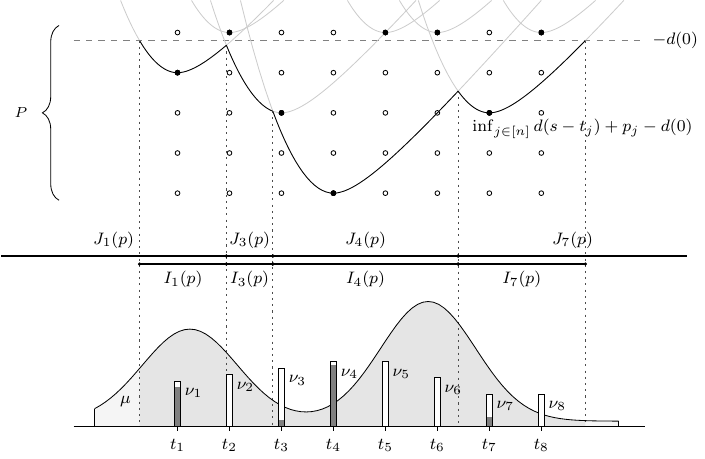}
\end{center}
\caption{\label{fig:intervals}
The black dots form an example of a feasible price profile when $n=8$ and~$P$ has five elements. 
The lower envelope of the functions $\dist{\xvar-\yvari{j}}+\pricei{j}$ determines the intervals~$\fullIntervali{j}{\profile}$, which are ordered as in \cref{prop:intervals}. Its nonpositivity further delimits the intervals~$\Intervali{j}{\profile}$. All time slots with prices above the threshold~$-\dist{0}$ are therefore avoided.
The solution is feasible as the service loads $\xmeasure{\Intervali{j}{\profile}}$ are less than the capacities~$\ymeasurei{j}$.
}
\end{figure}

\section{Algorithm when \texorpdfstring{$P$}{\textit{P}} Is Finite}\label{sec:algoPfinite}

The purpose of this section is to prove~\cref{thm:semi-disc} and to explicitly describe the algorithm guaranteed by this theorem. This is done in a first subsection, assuming two lemmas, which are eventually proved in a second subsection.

\subsection{Description of the Algorithm and Proof of~\cref{thm:semi-disc}}\label{subsec:algoPfinite}
\newcommand{\priceleft}{r}
\newcommand{\pricecentral}{q}
\newcommand{\priceright}{r'}
The algorithm for solving the problem when $P$ is finite uses a directed graph. 
To ease the description of this graph, we will say that a time slot~$\ell$ is {\em covered} by a pair $(i,\pricecentral) \in [n] \times P$ at $s \in \Real$ if 
\[
d(s-t_i) + \pricecentral \leq d(s-t_{\ell}) + \max P .
\]
By extension, we consider that $\ell\in [n]$ is covered by~$(i,\pricecentral)$ at $-\infty$ if
$\lim\nolimits_{s'\rightarrow -\infty} d(s'-t_i) - d(s'-t_{\ell}) \leq \max P - \pricecentral$, and at~$+\infty$ if
$\lim\nolimits_{s'\rightarrow +\infty} d(s'-t_i) - d(s'-t_{\ell}) \leq \max P - \pricecentral$.
These limits are well defined by \cref{fact:increasing}.
From~\cref{fact:increasing} we also know that, for~$i,j\in[n]$ with $i<j$ and $\pricecentral\in P$, the equation $\dist{\xvar-\yvari{i}}-\dist{\xvar-\yvari{j}}=\pricecentral$ has at most one solution. We write this solution~$\latch{i}{j}{\pricecentral}$, where $\latchfunction{i}{j}$ is the inverse of $\xvar\mapsto\dist{\xvar-\yvari{i}}-\dist{\xvar-\yvari{j}}$, with the convention $\latch{i}{j}{\pricecentral}=-\infty$ when $\dist{\xvar-\yvari{i}}-\dist{\xvar-\yvari{j}}>\pricecentral$ for all $\xvar\in\Real$, and 
$\latch{i}{j}{\pricecentral}=+\infty$ when $\dist{\xvar-\yvari{i}}-\dist{\xvar-\yvari{j}}<\pricecentral$ for all $\xvar\in\Real$.
Let $D=(V,A)$ be the directed digraph with $V \coloneqq ([n]\times P ) \cup\{(0,\cdot),(n+1,\cdot)\}$ and with $A \coloneqq A_1 \cup A_2 \cup A_3$, where
\begin{align*}
    A_1 & \coloneqq \left\{ \bigl((0,\cdot),(j,\pricecentral)\bigl) \colon \text{every $\ell$ in $\{1,\ldots,j\}$  is covered by $(j,\pricecentral)$ at $-\infty$}\right\} \\[1ex] 
    A_2 &  \coloneqq \left\{ \bigl((i,\priceleft),(j,\pricecentral)\bigl)\colon i<j, \; \latch{i}{j}{\pricecentral-\priceleft}\neq \pm\infty \; \text{and every $\ell$ in $\{i,\ldots,j\}$ is 
    } \right. 
    \\[-1ex] & \hspace{45mm}  \left. \text{covered by $(i,\priceleft)$ 
    and by $(j,\pricecentral)$ 
    at $\latch{i}{j}{\pricecentral-\priceleft}$}\right\}  \\
    A_3 & \coloneqq \left\{ \bigl((j,\pricecentral),(n+1,\cdot)\bigl) \colon \text{every $\ell$ in $\{j,\ldots,n\}$ is covered by $(j,\pricecentral)$ at $+\infty$}\right\} \, .
\end{align*}
In $D$, the vertices $(0,\cdot)$ and~$(n+1,\cdot)$ are respectively seen as the source and the sink. To each pair of consecutive arcs $a,a'$, we assign a \emph{reward}~$w(a,a')$ as follows.

\smallskip

\begin{easylist}\ListProperties(Style1*=\scshape$\bullet$, Hide1=2, Space1*=0.3cm)

# \label{A1} 
{\em 
When $a = \bigl((0,\cdot),(j,\pricecentral)\bigl)\in A_1$ and $a' =  \bigl((j,\pricecentral),(k,\priceright)\bigl)\in A_2$.} 
Then set
\[
w(a,a') \coloneqq \left\{\begin{array}{ll}\pricecentral v(a,a') & \text{if $v(a,a')\leq \nu_j$,} \\ -\infty & \text{otherwise,}
\end{array}\right.
\]
where
$ 
v(a,a') \coloneqq \mu\bigl(\{s\in(-\infty,\latch{j}{k}{\priceright-\pricecentral}]\colon  d(s-t_j) + \pricecentral\leq 0\}\bigl)
$. 

# \label{A2} 
{\em When $a =  \bigl((i,\priceleft),(j,\pricecentral)\bigl)\in A_2$ and $a'=\bigl((j,\pricecentral),(k,\priceright)\bigl)\in A_2$.} 
Then set 
\[
w(a,a') \coloneqq \left\{\begin{array}{ll}\pricecentral v(a,a') & \text{if $\latch{i}{j}{\pricecentral-\priceleft}\leq \latch{j}{k}{\priceright-\pricecentral}$ and $v(a,a')\leq \nu_j$,} \\ -\infty & \text{otherwise,}
\end{array}\right.
\]
where
$ 
v(a,a') = \mu\bigl(\{s \in [\latch{i}{j}{\pricecentral-\priceleft},\latch{j}{k}{\priceright-\pricecentral}] \colon  d(s-t_j) + \pricecentral\leq 0 \} \bigl)
$. 

# \label{A3} 
{\em When $a = \bigl((i,\priceleft),(j,\pricecentral)\bigl)\in A_2$ and $a' = \bigl((j,\pricecentral),(n+1,\cdot)\bigl)\in A_3$.} 
Then set 
\[
w(a,a') \coloneqq \left\{\begin{array}{ll}\pricecentral v(a,a') & \text{if $v(a,a')\leq \nu_j$,} \\ -\infty & \text{otherwise,}
\end{array}\right.
\]
where
$ 
v(a,a') \coloneqq \mu\bigl(\{s \in [\latch{i}{j}{\pricecentral-\priceleft},+\infty) \colon  d(s-t_j) + \pricecentral\leq 0 \}\bigl)
$. 

# \label{A4} 
{\em 
When $a = \bigl((0,\cdot),(j,\pricecentral)\bigl)\in A_1$ and $a' =  \bigl((j,\pricecentral),(n+1,\cdot)\bigl)\in A_3$.} 
Then set
\[
w(a,a') \coloneqq \left\{\begin{array}{ll}\pricecentral v(a,a') & \text{if $v(a,a')\leq \nu_j$,} \\ -\infty & \text{otherwise,}
\end{array}\right.
\]
where
$ 
v(a,a') \coloneqq \mu\bigl(\{s\in\Real\colon  d(s-t_j) + \pricecentral\leq  0\}\bigl)
$. 

\end{easylist}

We consider this reward to be attached to the corresponding arc in the line digraph of~$D$, denoted by~$L(D)$. (We remind the reader that the \emph{line digraph} has~$A$ as its vertex set and an arc~$(a,a')$ for each pair of consecutive arcs~$a,a'$ in~$D$.) As usual, the reward~$w(C)$ of a path~$C$ in~$L(D)$ is defined as the sum of the rewards on its arcs. We are thus interested in paths in~$L(D)$ whose induced paths in~$D$ go from the source to the sink.
To lighten the text, we call source-to-sink  path in~$L(D)$ the line path of any source-to-sink path in~$D$. 
 
The idea of the construction is to model the price profiles as source-to-sink paths in $D$. Roughly speaking, the vertices visited by such a path indicate which time slots 
should be considered for selection by the users,
and which prices may be attached to this time slots. The conditions in the definition of $A_1$, $A_2$, and $A_3$ ensure that it is always possible to choose prices for unused time slots that keep them unattractive to all users.
The revenue generated by a vertex of such a path depends on the vertices just before and just after on the path, and this is the reason why it is more convenient to consider~$L(D)$.
See \cref{fig:path} for an illustration of a source-to-sink path in~$D$ and its corresponding line path in~$L(D)$.
%

\begin{figure}[ht]
\begin{center}
\includegraphics[]{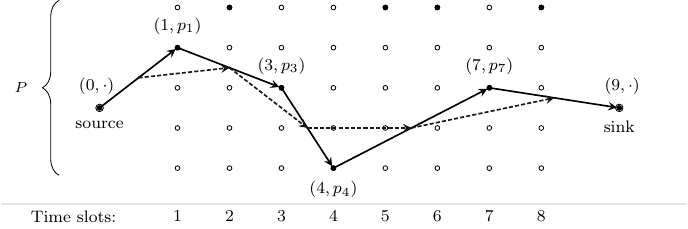}
\end{center}
\caption{\label{fig:path}
Example of a source-to-sink path 
in~$D$ with its associated path in~$L(D)$ (dashed lines) when $n=8$ and~$P$ has five elements. 
The corresponding price profile derived in \cref{lem:path-price} is given by the sequence of black dots.
}
\end{figure}

For the next statements, we denote by $R(p)$ the revenue $\sum_{j=1}^n\mu(I_j(p))$ when~$p$ is a feasible solution of~\eqref{upper-level}. We set $R(p)\coloneqq -\infty$ when~$p$ is not a feasible solution of~\eqref{upper-level}.

\begin{lemma}\label{lem:price-path}
Given a price profile $p \in P^n$, there exists a source-to-sink path~$C$ in~$L(D)$ with $R(p) \leq w(C)$.
\end{lemma}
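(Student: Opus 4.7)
The plan is to construct, from the price profile $\profile$, an explicit source-to-sink path in $L(D)$ whose reward equals $R(\profile)$. If $\profile$ is infeasible then $R(\profile) = -\infty$ and the inequality is vacuous, so assume $\profile$ is feasible. Let $J = \{j \in [n] \colon \fullIntervali{j}{\profile} \neq \emptyset\}$ and order it as $j_1 < j_2 < \cdots < j_m$; by \cref{prop:intervals} this set is non-empty. The candidate path $C$ is the line path of the source-to-sink walk $(0,\cdot), (j_1, \pricei{j_1}), \dots, (j_m, \pricei{j_m}), (n+1,\cdot)$ in $D$.

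The key step is verifying that every arc of this walk belongs to $D$. For an internal transition $((j_r,\pricei{j_r}),(j_{r+1},\pricei{j_{r+1}}))$, \cref{prop:intervals} guarantees that $\fullIntervali{j_r}{\profile}$ and $\fullIntervali{j_{r+1}}{\profile}$ are adjacent non-empty closed intervals, so their common endpoint $s^\star$ satisfies $\dist{s^\star - \yvari{j_r}} + \pricei{j_r} = \dist{s^\star - \yvari{j_{r+1}}} + \pricei{j_{r+1}}$ and therefore equals $\latch{j_r}{j_{r+1}}{\pricei{j_{r+1}} - \pricei{j_r}} \in \Real$. For every $\ell \in \{j_r, \dots, j_{r+1}\}$, the minimality of $j_r$ and $j_{r+1}$ at $s^\star$ and the bound $\pricei{\ell} \leq \max P$ give
\[
\dist{s^\star - \yvari{j_r}} + \pricei{j_r} \leq \dist{s^\star - \yvari{\ell}} + \pricei{\ell} \leq \dist{s^\star - \yvari{\ell}} + \max P,
\]
and the same inequality holds with $j_{r+1}$ in place of $j_r$, placing the arc in $A_2$. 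For the initial arc from the source, the ordering-and-covering property of \cref{prop:intervals} together with $\fullIntervali{j}{\profile} = \emptyset$ for $j < j_1$ forces $\fullIntervali{j_1}{\profile}$ to extend to $-\infty$; passing to the limit in $\dist{s - \yvari{j_1}} + \pricei{j_1} \leq \dist{s - \yvari{\ell}} + \pricei{\ell}$, which holds for all sufficiently negative $s$, delivers the $A_1$ covering condition. The terminal arc is handled symmetrically.

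Computing the reward is then a direct identification: for each $r$, the set whose $\xmeasurefunction$-measure enters the reward formula (in the case that applies) is precisely $\fullIntervali{j_r}{\profile} \cap \sublevelseti{j_r}{\pricei{j_r}} = \Intervali{j_r}{\profile}$, because the interval prescribed there coincides with $\fullIntervali{j_r}{\profile}$, possibly unbounded on one side at the endpoints of the walk. Feasibility of $\profile$ ensures $\xmeasure{\Intervali{j_r}{\profile}} \leq \ymeasurei{j_r}$, so no $-\infty$ penalty is triggered, and the contribution at $(j_r, \pricei{j_r})$ is $\pricei{j_r} \xmeasure{\Intervali{j_r}{\profile}}$. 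Summing over $r$ and using $\xmeasure{\Intervali{j}{\profile}} = 0$ whenever $j \notin J$ (since $\Intervali{j}{\profile} \subseteq \fullIntervali{j}{\profile} = \emptyset$) yields $w(C) = \sum_{j=1}^n \pricei{j} \xmeasure{\Intervali{j}{\profile}} = R(\profile)$, in fact an equality stronger than the lemma's claim. The main obstacle will be the careful bookkeeping for the extreme arcs and the limit arguments at $\pm \infty$; the degenerate case $m=1$ is absorbed uniformly into a single source-to-sink arc pair using $\fullIntervali{j_1}{\profile} = \Real$.
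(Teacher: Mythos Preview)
Your proof is correct and follows essentially the same approach as the paper's: build the source-to-sink walk in $D$ by visiting $(j_r,\pricei{j_r})$ for the time slots~$j_r$ whose interval $\fullIntervali{j_r}{\profile}$ is non-trivial, then identify each arc-pair reward with $\pricei{j_r}\xmeasure{\Intervali{j_r}{\profile}}$, obtaining the equality $w(C)=R(\profile)$. The only cosmetic difference is your selection criterion $\fullIntervali{j}{\profile}\neq\varnothing$ versus the paper's ``$\fullIntervali{j}{\profile}$ has non-zero length''; both choices yield a valid path in $D$ and the same conclusion, and you spell out the verification of the $A_1$, $A_2$, $A_3$ covering conditions a bit more explicitly than the paper does.
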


\begin{lemma}\label{lem:path-price}
Given a source-to-sink path~$C$ in~$L(D)$, there exists a price profile~$p \in P^n$ with $R(p) \geq w(C)$. Such a profile can be built in linear time.
\end{lemma}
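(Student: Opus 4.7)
The plan is to extract from the source-to-sink path $C$ the underlying path in $D$, namely
\[
(0,\cdot) \to (j_1,q_1) \to \cdots \to (j_m,q_m) \to (n+1,\cdot),
\]
where $1 \leq j_1 < \cdots < j_m \leq n$ (strict inequality by the definition of $A_2$). I would then set $p_{j_k} \coloneqq q_k$ for every $k \in \{1,\dots,m\}$ and $p_\ell \coloneqq \max P$ for every $\ell \in [n]$ not visited by the path. This construction is clearly linear in $n$.

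Next I would introduce the latch points $s_k \coloneqq \latch{j_k}{j_{k+1}}{q_{k+1}-q_k}$ for $1 \leq k \leq m-1$, together with $s_0 \coloneqq -\infty$ and $s_m \coloneqq +\infty$; the ordering condition built into $A_2$ ensures $s_{k-1} \leq s_k$. Applying \cref{prop:intervals} to the restriction of $p$ to $\{j_1,\dots,j_m\}$ would then yield that the map $k' \mapsto \dist{s-t_{j_{k'}}} + q_{k'}$ is minimized at $k' = k$ precisely when $s \in [s_{k-1},s_k]$.

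The main step is to show that every off-path slot $\ell$ is globally dominated by some on-path slot, so that no user ever selects it. For $\ell < j_1$, the $A_1$ ``covered at $-\infty$'' condition combined with the monotonicity of $s \mapsto \dist{s-t_{j_1}} - \dist{s-t_\ell}$ from \cref{fact:increasing} (which is decreasing since $\ell < j_1$) yields $\dist{s-t_{j_1}} + q_1 \leq \dist{s-t_\ell} + \max P$ for every $s \in \Real$. The case $\ell > j_m$ is symmetric via the $A_3$ condition. For $\ell$ with $j_a < \ell < j_{a+1}$, the $A_2$ condition at the arc $\bigl((j_a,q_a),(j_{a+1},q_{a+1})\bigr)$ gives domination of $\ell$ at the single point $s_a$ by both $(j_a,q_a)$ and $(j_{a+1},q_{a+1})$; \cref{fact:increasing} then extends the former to $s \leq s_a$ (using $j_a < \ell$) and the latter to $s \geq s_a$ (using $\ell < j_{a+1}$). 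Combined with the within-path optimality from the previous paragraph, this gives $\dist{s-t_{j_k}} + q_k \leq \dist{s-t_\ell} + \max P$ for every $s \in [s_{k-1},s_k]$ and every $\ell \in [n]$. Managing this case analysis with the correct orientation of monotonicity is the main technical obstacle I anticipate.

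Finally I would compute the revenue. Off-path slots have $\fullIntervali{\ell}{p}$ consisting of at most one tie point (by \cref{fact:increasing}), so $\xmeasure{\Intervali{\ell}{p}} = 0$ by absolute continuity of $\mu$. Each on-path $j_k$ satisfies $\Intervali{j_k}{p} = [s_{k-1},s_k] \cap \sublevelseti{j_k}{q_k}$ up to a null set, so $\xmeasure{\Intervali{j_k}{p}}$ equals precisely the quantity $v(a_k,a_{k+1})$ appearing in the reward on the two consecutive arcs of $C$ through $(j_k,q_k)$. If $w(C) = -\infty$ the inequality $R(p) \geq w(C)$ is immediate; otherwise every $v(a_k,a_{k+1}) \leq \nu_{j_k}$, making $p$ feasible for \eqref{upper-level}, and summing yields $R(p) = \sum_{k=1}^m q_k v(a_k,a_{k+1}) = w(C)$.
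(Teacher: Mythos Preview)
Your approach is essentially the paper's: build $p$ by assigning $\max P$ to off-path slots, identify the $J_{j_k}(p)$ with $[s_{k-1},s_k]$ via the covering conditions plus \cref{fact:increasing}, and read off $R(p)=w(C)$. Two small corrections. First, the ordering $s_{k-1}\leq s_k$ is \emph{not} built into $A_2$ (membership in $A_2$ only gives $s_{ij}(q-r)\neq\pm\infty$); it is enforced by the finiteness of the reward in case~A2 of the definition of $w$, so you must dispose of the case $w(C)=-\infty$ \emph{before} invoking this ordering, not after. Second, that off-path $J_\ell(p)$ reduces to at most a point is a consequence of \cref{prop:intervals} (ordered intervals with disjoint interiors, squeezed between $J_{j_a}(p)\supseteq[s_{a-1},s_a]$ and $J_{j_{a+1}}(p)\supseteq[s_a,s_{a+1}]$), not directly of \cref{fact:increasing}.
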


\Cref{lem:price-path,lem:path-price} show that computing an optimal price profile boils down to computing a source-to-sink path in~$L(D)$ of maximal reward. Since~$D$ is acyclic, $L(D)$ is acyclic as well, and such a computation can be done in linear time in the number of arcs of~$L(D)$ (see, e.g., \cite{moravek70},\cite[Theorem 8.16]{schrijver03}), i.e., in $O(|A||V|) = O(n^3|P|^3)$.
The construction of~$D$ itself can be done in the same time complexity.
This suffices to prove  Theorem~\ref{thm:semi-disc}.
Observe that the price profile of \cref{fig:intervals} and the source-to-sink path of  \cref{fig:path} are related through \cref{lem:price-path,lem:path-price}.

\subsection{Proofs of the Lemmas}

\begin{proof}[
\Cref{lem:price-path}]
Let $p=(p_1,\ldots,p_n) \in P^n$. If $\mu(I_j(p)) > \nu_j$ for at least one $j$, then we have $R(p) = -\infty$, and in particular $R(p) \leq w(C)$ for any source-to-sink path~$C$. So we assume that $\mu(I_j(p)) \leq \nu_j$ for all $j$. Consider the~$m$ time slots $
j_1, \dots , j_m
$ such that $
j_1 < j_2 < \cdots < j_m
$ and 
$\fullIntervali{j_i}{p}$ is an interval of non-zero length for every $i$, where $1\leq m \leq n$. The sequence of vertices 
$(0,\cdot),(j_1,p_{j_1}),\dots,(j_m,p_{j_m}),(n+1,\cdot)$ 
induces a source-to-sink path in $D$ by \cref{prop:intervals}: 
the endpoints of the intervals $\fullIntervali{j_1}{p},\dots,\fullIntervali{j_m}{p}$ are precisely the points $-\infty,\latch{j_1}{j_2}{p_{j_2}-p_{j_1}},\dots,\latch{j_{m-1}}{j_{m}}{p_{j_{m}}-p_{j_{m-1}}},+\infty$.
Set $(j_0,p_0)=(0,\cdot)$ and $(j_{m+1},p_{n+1})=(n+1,\cdot)$, 
and consider two consecutive arcs $a_i=\big((j_{i-1},p_{j_{i-1}}),(j_{i},p_{j_{i}})\big)$ and $a_{i+1}=\bigl((j_{i},p_{j_i}),(j_{i+1},p_{j_{i+1}})\bigl)$ of this path. By construction, $v(a_i,a_{i+1}) = \mu(\Intervali{j_{i}}{p})$ is the service load at~$j_{i}$ and $w(a_i,a_{i+1}) = p_{j_{i}}\mu(\Intervali{j_{i}}{p})$. The path induces in turn a source-to-sink path $C$ in $L(D)$ such that $R(p) = w(C)$.
\qed\end{proof}

\begin{proof}[
\Cref{lem:path-price}]
\newcommand{\realizedset}{U}%
Let~$C$ be a source-to-sink path in $L(D)$ with $m$ arcs denoted by $(a_0,a_1),\dots,$ $(a_{m-1},a_{m})$, where $1\leq m \leq n$. 
Let $
(j_1,p_{j_1}),\dots,$ $(j_m,p_{j_m})
$ be the internal vertices of the path induced by $C$ in $D$,
and set $\realizedset=\{j_1,\dots,j_m\}$. For each $j\notin\realizedset$, 
set $p_j \coloneqq \max P$. This defines a price profile $p=(p_1,\ldots,p_n)$. If $w(a,a')=-\infty$ for an arc $(a,a')$ of $C$, then we have $w(C)=-\infty$, and in particular $R(p) \geq w(C)$.
So we assume $w(a,a') > -\infty$ for each arc $(a,a')$ of $C$. 

In the rest of the proof, we consider the sequence 
$s_1,\dots,s_{m-1}$, where
$s_{i} \coloneqq \latch{j_{i}}{j_{i+1}}{p_{j_{i+1}}-p_{j_{i}}}$. When $m>1$, this sequence is nonempty, and for $i\in[m-1]$ we have $\big((j_{i},p_{j_{i}}),((j_{i+1},p_{j_{i+1}}))\big)\in A_2$ and $w\big((j_{i},p_{j_{i}}),((j_{i+1},p_{j_{i+1}}))\big)>-\infty$, which imply $-\infty < s_1 
\leq \cdots \leq s_{m-1} < +\infty$.

First assume that $m>1$. We claim that 
$-\infty , s_1 
, \dots , s_{m-1} , +\infty$
are the endpoints of $\fullIntervali{j_1}{p},\dots,\fullIntervali{j_m}{p}$.
To prove this, it is enough to show that
the inequality
\begin{equation}\label{theinequality}
\dist{\xvar-\yvari{j_{i}}}+\pricei{j_{i}} \leq \dist{\xvar-\yvari{\ell}}+\pricei{\ell}
\end{equation} 
holds for $j_{i}\leq \ell \leq n$ when $i \in [m-1]$ and $\xvar\leq s_{i}$, and when $i=m$ and $\xvar\in\Real$; and the  inequality
$\dist{\xvar-\yvari{j_{i}}}+\pricei{j_{i}} \leq \dist{\xvar-\yvari{\ell}}+\pricei{\ell}$ holds for $
1 \leq \ell \leq j_{i}$ when $i=1$ and $\xvar\in\Real$, and when $i-1 \in [m-1]$ and $\xvar\geq s_{i-1}$. 
Indeed, combining these inequalities yields $\fullIntervali{j_1}{p}\supseteq (-\infty,s_{1}] , \fullIntervali{j_2}{p}\supseteq [s_{1},s_{2}] ,\dots, \fullIntervali{j_{m-1}}{p}\supseteq [s_{m-2},s_{m-1}], \fullIntervali{j_m}{p} \supseteq [s_{m-1},+\infty) $.
The claim then follows from \cref{prop:intervals}.
Suppose now that $m=1$. We claim that~$\fullIntervali{j_1}{p}=\Real$. To prove this, it is enough to show that the same two inequalities hold for $i=1=m$ and~$\xvar\in\Real$. 
To establish the two claims, 
we only verify the inequality~\eqref{theinequality}; the second one can be checked similarly.
We do this by considering~$i\in [m]$, $j_{i}\leq \ell \leq n$, and~$\xvar\in\Real$ in the following three cases:

\smallskip
\begin{easylist}
\ListProperties(Numbers=r,Mark=.,FinalMark={)},FinalMark3={)})

\item $\bullet$
\emph{When $i=m$ and~$\xvar\in\Real$.}
Then, we have $\ell\geq j_{m}$, which implies $\ell\notin \realizedset$ and  $p_{\ell} = \max P$.
Since~$C$ is a source-to-sink path in $L(D)$, we infer from the definition of the arcs of~$D$ that
~$\ell$ is covered by $(j_m,p_{j_m})$ at $+\infty$.
It follows from \cref{fact:increasing} and $j_m \leq \ell$ that
$
\dist{\xvar-\yvari{j_{m}}}
-
\dist{\xvar-\yvari{\ell}}
\leq
\dist{\altxvar-\yvari{\yvari{j_{m}}}}
-
\dist{\altxvar-\yvari{\ell}}
$ holds for all $\altxvar\geq\xvar$.
Equivalently, we have $
\dist{\xvar-\yvari{j_{m}}}+\pricei{j_{m}} 
\leq 
\dist{\xvar-\yvari{\ell}}
+
\pricei{j_{m}} 
+
\dist{\altxvar-\yvari{\yvari{j_{m}}}}
-
\dist{\altxvar-\yvari{\ell}}
$ for all $\altxvar\geq\xvar$.
Using the fact that~$\ell$ is covered and letting $\altxvar\to+\infty$ yields 
$
\dist{\xvar-\yvari{j_{m}}}+\pricei{j_{m}} 
\leq
\dist{\xvar-\yvari{\ell}}
+
\pricei{j_{m}} 
+
\max P
-
\pricei{j_{m}} 
=
\dist{\xvar-\yvari{\ell}}
+
\pricei{\ell}
$.
Hence, the inequality~\eqref{theinequality} holds.

\item $\bullet$
\emph{When~$1\leq i<m$, $\ell\in\realizedset$, and $\xvar\leq s_{i}$.} Then, we show~\eqref{theinequality} by induction on~$k$.
For~$k=i$, the inequality~\eqref{theinequality} is immediate. Now, suppose it holds for $k\in \{i,\dots,m -1\}$.
Using successively \cref{fact:increasing} with $\xvar\leq s_{i}\leq s_{k}$ and the definition of~$\latch{j_{k}}{j_{k+1}}{p_{k+1}-p_{k}}$, we find $\dist{\xvar-\yvari{j_{k}}}
-
\dist{\xvar-\yvari{j_{k+1}}}
\leq
\dist{s_{k}-\yvari{j_{k}}}
-
\dist{s_{k}-\yvari{j_{k+1}}}
=
\pricei{j_{k+1}}-\pricei{j_{k}}
$.
Then, 
combining the induction hypothesis with the last inequality,
we find
$
\dist{\xvar-\yvari{j_{i}}}+\pricei{j_{i}} 
\leq
\dist{\xvar-\yvari{j_{k}}}+\pricei{j_{k}} 
\leq
\dist{\xvar-\yvari{j_{k+1}}}
+
\pricei{j_{k+1}} 
$. 
Hence, the inequality~\eqref{theinequality} holds.

\item $\bullet$
\emph{When~$1\leq i<m$, $\ell\notin\realizedset$, and $\xvar\leq s_{i}$.} Then,
we have $p_{\ell} = \max P$. 
Since~$C$ is a source-to-sink path in $L(D)$, we infer from the definition of the arcs of~$D$ that there is $k\in\{i,\dots,m\}$ such that $j_k\leq \ell$ and~$\ell$ is covered either by $(j_k,p_{j_k})$ at $s_{k}$ when $i\leq k < m$, or by $(j_m,p_{j_m})$ at $+\infty$ when $k=m$.
In the first case, 
it follows from \cref{fact:increasing}, $j_k \leq \ell$, and $\xvar\leq s_{k}$ that
$
\dist{\xvar-\yvari{j_{k}}}
-
\dist{\xvar-\yvari{\ell}}
\leq
\dist{s_{k}-\yvari{j_{k}}}
-
\dist{s_{k}-\yvari{\ell}}
$.
Applying successively the inequality~\eqref{theinequality} at~$j_k$, the fact that~$\ell$ is covered, and
the last inequality,
we find 
$
\dist{\xvar-\yvari{j_{i}}}+\pricei{j_{i}} 
\leq
\dist{\xvar-\yvari{j_{k}}}+\pricei{j_{k}} 
\leq 
\dist{s_{k}-\yvari{\ell}}
+
\max P
+
\dist{\xvar-\yvari{j_{k}}}
-
\dist{s_{k}-\yvari{j_{k}}}
\leq
\dist{\xvar-\yvari{\ell}}
+
\pricei{\ell} 
$.
In the second case, 
applying the inequality~\eqref{theinequality} successively at~$\xvar\leq s_{i}$ for~$j_i\leq j_m$ and then at~$\xvar\in\Real$ for $  j_m\leq \ell$, we find $
\dist{\xvar-\yvari{j_{i}}}+\pricei{j_{i}} 
\leq
\dist{\xvar-\yvari{j_{m}}}+\pricei{j_{m}} 
\leq 
\dist{\xvar-\yvari{\ell}}
+
\pricei{\ell}
$.
Hence, the inequality~\eqref{theinequality} holds. 

\end{easylist}

\smallskip

Thus, we have shown 
\[
\begin{array}{r}
\fullIntervali{j_1}{p}=(-\infty,s_{1}] \, 
,\ 
\fullIntervali{j_2}{p}=[s_{1},s_{2}] \, 
,\ \dots,\
\fullIntervali{j_{m-1}}{p}=[s_{m-2},s_{m-1}]\, 
,\ 
\qquad\\
\fullIntervali{j_m}{p}=[s_{m-1},+\infty)\, .
\end{array}
\]
This implies $v(a_{i-1},a_{i}) = \xmeasure{\{\xvar\in\fullIntervali{j_i}{p}\colon \dist{\xvar-\yvari{j_i}} +  \pricei{j_i}\leq 0\}} = \xmeasure{\Intervali{j_i}{p}}$ for  $i=1,\ldots,m$, and therefore $\xmeasure{\Intervali{j_i}{p}} \leq \ymeasurei{j_i}$ and $\pricei{j_i}\xmeasure{\Intervali{j_i}{p}} = w(a_{i-1},a_{i})$. 
Since $\fullIntervali{j_1}{p},\dots,\fullIntervali{j_m}{p}$ cover~$\Real$, for every~$\ell$ that is not realized as $j_i$ we have 
$\xmeasure{\Intervali{\ell}{p}} = 0$. 
Hence $R(p) = \sum_{\ell=1}^{n} \pricei{\ell}\xmeasure{\Intervali{\ell}{p}} = w(C)$.
\qed\end{proof}


\section{Algorithm when \texorpdfstring{$P$}{\textit{P}} Is \texorpdfstring{$\Real$}{\textbf{R}}} \label{sec:algoPisR}

This section is devoted to the proof of~\cref{thm:cont}, which deals with the case $P=\Real$. 
Throughout \cref{sec:algoPisR} we assume that~$\distfunction$ is strongly convex with parameter~$\stronglyconvex$ and that the density of~$\xmeasurefunction$  is interval supported and  lower- and upper-bounded above~$0$.
We respectively denote the corresponding bounds on the density by~$\xmeasuremin>0$ and~$\xmeasuremax$.
We also define the constant
$\lipschitz= 2 \max\big\{[\stronglyconvex (\yvari{i+1}-\yvari{i})]^{-1} \, \colon \, i\in[n-1] \big\}$, which plays a role in our analysis, as well as the price thresholds
$\pricemax \coloneqq -d(0)$  and
\[
\pricemin \coloneqq \min\left(\pricemax,
\inf
\big\lbrace d(s-t_j)-d(s-t_k)
\colon s \in \supp(\xmeasurefunction)
, j,k\in[n] 
\big\rbrace\right)
\, .
\]
Lastly, we adopt the following notation: for any~$q\in\Real$, we write  $q^{\delta}=\lfloor q /\delta \rfloor\delta$; 
for any~$\profile\in\Real^n$, we write  $\profilemdelta{\delta}=(\pricemdeltai{\delta}{1},\dots,\pricemdeltai{\delta}{n})$.

\subsection{Lower Bound}\label{sec:lowerbound}

As a lower bound on~$\OPT{\Real}$, we consider $\operatorname{LB}(\delta)=\OPT{\delta\Integer}$,  where $\delta>0$. The computation of~$\operatorname{LB}(\delta)$ rests on the following result.

\begin{proposition}\label{prop:bounded}
We have $\OPT{\Real} = \OPT{[\pricemin,\pricemax]}$. Moreover, for every $\delta>0$, we have $\OPT{\delta\Integer} = \OPT{[\pricemin,\pricemax+\delta} \cap \delta\Integer)$.
\end{proposition}
%
%
\begin{proof}

We first deal with the case $P=\Real$. Setting $p_k$ to $p^{\max}$ for every $k$ yields a feasible solution giving $0$ to the objective function, which implies $\OPT{\Real} \geq 0$. If $\OPT{\Real} = 0$, then the solution with all $p_k$ equal to $p^{\max}$ is optimal and we have $\OPT{\Real} = \OPT{[\pricemin,\pricemax]}$. We can thus suppose $\OPT{\Real} > 0$ and consider a feasible solution $p \in \Real^n$ giving a positive value to the objective function. Pick a $j \in [n]$ such that $\mu(I_j(p)) > 0$ and $p_j >0$.
By the definition of $j$, there is a point $s \in \supp(\xmeasurefunction)$ such that $d(s-t_j) + p_j \leq d(s-t_k) + p_k $ for every $k \in [n]$, yielding $   p_k  \geq d(s-t_j) - d(s-t_k) + p_j \geq \pricemin +  p_j > \pricemin$. Thus, $p_k > p^{\min}$ for every $k \in [n]$. 
Since setting $p'_k\coloneqq \min(p_k, p^{\max})$ provides a feasible solution $p'$ that does not decrease the objective function, we get the equality $\OPT{\Real} = \OPT{[\pricemin,\pricemax]}$.

We then deal with the case $P=\delta\Integer$. Similarly as for the previous case, setting $p_k$ to $q \coloneqq \lceil p^{\max} / \delta\rceil\delta$ yields a feasible solution giving $0$ to the objective function, which implies $\OPT{\delta\Integer} \geq 0$. If $\OPT{\delta\Integer}=0$, then $\OPT{\delta\Integer} = \OPT{[\pricemin,\pricemax+\delta} \cap \delta\Integer)$ for the same reasons as above. We can thus suppose $\OPT{\delta\Integer}>0$ and consider a feasible solution $p \in (\delta\Integer)^n$ giving a positive value to the objective function. 
Again, as above, we necessarily have $p_k > p^{\min}$ for every $k \in [n]$. Since setting $p'_k\coloneqq \min(p_k, q)$ provides a
feasible solution~$p'$ that does not decrease the objective function, we get the equality $\OPT{\delta\Integer} = \OPT{[\pricemin,\pricemax+\delta} \cap \delta\Integer)$.
\qed\end{proof}

By continuity of the objective and the constraints in~\eqref{upper-level} and by compactness, an immediate consequence of this proposition is that there always exists an optimal solution to problem~\eqref{upper-level} on~$[\pricemin,\pricemax]$ when $P = \Real$, and one on~$[\pricemin,\pricemax+\delta ) \cap \delta\Integer$ when $P = \delta\Integer$.
Combining \cref{prop:bounded} with \cref{thm:semi-disc} implies that the computation of the lower bound $\operatorname{LB}(\delta)
$ can be done in time $O(\frac 1 {\delta^3} n^3)$. 

\subsection{Upper Bound and Proof of~\cref{thm:cont}}\label{sec:upperbound}
Given~$\delta>0$, 
consider the following 
``approximate'' 
version of~\eqref{upper-level}:
\begin{equation}\label{upper-level-alter-delta}\tag{U${}^\delta$}
\begin{array}{r@{\qquad}l@{\quad}l}
\underset{\hspace{-10mm} \pricei{1},\ldots,\pricei{n}\hspace{-10mm}}{\text{maximize}} & \ds{\sum_{j=1}^n 
(\pricemdeltai{\delta}{j} + \delta )
\xmeasure{\Intervali{j}{\profilemdelta{\delta}}}
}
\\
\text{subject to} 
 &
 \xmeasure{\Intervali{j}{\profilemdelta{\delta}}}
\leq
\ymeasurei{j} + \xmeasuremax\lipschitz \delta 
+ 2\xmeasuremax\sqrt{2 \delta /\stronglyconvex}  
& \forall j \in [n]
 \\ & \pricei{j} \in \Real
 & \forall j \in [n] \, 
 .
\end{array}
\end{equation}
\noindent 
Denote by $\OPTdelta{\delta}{\Real}$ its optimal value.

We first state a counterpart to~\cref{prop:bounded} for the modified problem~\eqref{upper-level-alter-delta}.
\begin{proposition}\label{prop:bounded-delta}\label{lem:upper}
For every $ \delta>0$, we have $\OPTdelta{\delta}{\Real}
+n\xmeasuremax\lipschitz \delta\pricemax
\geq \OPT{\Real}$. Moreover, \eqref{upper-level-alter-delta} admits an optimal solution in~$\big([\pricemindelta{\delta}-\delta,\pricemax+\delta]\cap\delta\Integer\big)^n$.
\end{proposition}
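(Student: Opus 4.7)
The proof addresses the two assertions separately.

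\textbf{First assertion.} The plan is to take an optimal solution $\profile^\star$ of \eqref{upper-level} in $[\pricemin,\pricemax]^n$ (granted by \cref{prop:bounded}) and use it as a candidate in \eqref{upper-level-alter-delta}. Denote by $\altprofile$ its coordinate-wise $\delta$-floor, so $\altpricei{j}=\lfloor\pricei{j}^\star/\delta\rfloor\delta\in[\pricei{j}^\star-\delta,\pricei{j}^\star]$; then the objective and constraint of \eqref{upper-level-alter-delta} at $\profile^\star$ depend on $\profile^\star$ only through $\altprofile$. The crux is a perturbation estimate for $\xmeasure{\Intervali{j}{\cdot}}$ between $\profile^\star$ and $\altprofile$, obtained via the decomposition $\Intervali{j}{\profile}=\fullIntervali{j}{\profile}\cap\sublevelseti{j}{\pricei{j}}$ and two consequences of strong convexity of $\distfunction$ with parameter $\stronglyconvex$:
\begin{enumerate}
\item[(i)] each endpoint of $\fullIntervali{j}{\profile}$ solves an equation $\dist{\xvar-\yvari{j}}-\dist{\xvar-\yvari{k}}=\pricei{k}-\pricei{j}$ for some $k$ adjacent to $j$ in the partition of \cref{prop:intervals}; strong convexity makes the left-hand side have slope in $\xvar$ at least $\stronglyconvex|\yvari{j}-\yvari{k}|$, so each endpoint shifts by at most $\delta/[\stronglyconvex|\yvari{j}-\yvari{k}|]\leq\lipschitz\delta/2$ when the prices are perturbed by at most $\delta$ per coordinate; consequently $\xmeasure{\fullIntervali{j}{\altprofile}\triangle\fullIntervali{j}{\profile^\star}}\leq\xmeasuremax\lipschitz\delta$;
\item[(ii)] since $\altpricei{j}\leq\pricei{j}^\star$, the set $\sublevelseti{j}{\altpricei{j}}$ contains $\sublevelseti{j}{\pricei{j}^\star}$ and, via $\dist{\xvar}\geq\dist{0}+(\stronglyconvex/2)\xvar^{2}$, exceeds it in length by at most $2\sqrt{2\delta/\stronglyconvex}$.
\end{enumerate}
Combining (i) and (ii) gives the two-sided estimate
\[
\xmeasure{\Intervali{j}{\profile^\star}}-\xmeasuremax\lipschitz\delta \leq \xmeasure{\Intervali{j}{\altprofile}}\leq \xmeasure{\Intervali{j}{\profile^\star}}+\xmeasuremax\lipschitz\delta+2\xmeasuremax\sqrt{2\delta/\stronglyconvex},
\]
in which the lower bound lacks the $\sqrt{\delta}$ term because $\sublevelseti{j}{\cdot}$ only grows under flooring of $\pricei{j}$. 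The upper bound combined with $\xmeasure{\Intervali{j}{\profile^\star}}\leq\ymeasurei{j}$ gives feasibility of $\profile^\star$ in \eqref{upper-level-alter-delta}.

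To compare objective values, I would decompose
\[
\sum_{j=1}^{n}(\altpricei{j}+\delta)\xmeasure{\Intervali{j}{\altprofile}}-\OPT{\Real}=\sum_{j=1}^{n}(\altpricei{j}+\delta-\pricei{j}^\star)\xmeasure{\Intervali{j}{\altprofile}}+\sum_{j=1}^{n}\pricei{j}^\star\bigl[\xmeasure{\Intervali{j}{\altprofile}}-\xmeasure{\Intervali{j}{\profile^\star}}\bigr],
\]
where the first sum is nonnegative ($\altpricei{j}+\delta\geq\pricei{j}^\star$ and $\xmeasurefunction\geq 0$) and the second is bounded below by $-n\xmeasuremax\lipschitz\delta\pricemax$ using the lower bound above together with $\pricei{j}^\star\leq\pricemax$. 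The main technical obstacle I foresee is the accounting in~(i): the adjacency structure of $\fullIntervali{\cdot}{\cdot}$ may differ between $\profile^\star$ and $\altprofile$ (some slots may enter or leave the active partition after flooring), so the two partitions must be compared carefully boundary-by-boundary using \cref{fact:increasing,lemma:monotonicity,prop:intervals}.

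\textbf{Second assertion.} The plan mirrors the proof of \cref{prop:bounded}. Since \eqref{upper-level-alter-delta} depends on $\profile$ only through $\profilemdelta{\delta}\in\delta\Integer^n$, I may restrict to $\profile\in\delta\Integer^n$. The uniform choice $\pricei{j}=\lceil\pricemax/\delta\rceil\delta\in[\pricemax,\pricemax+\delta]$ is feasible and yields objective $0$, so $\OPTdelta{\delta}{\Real}\geq 0$. If $\OPTdelta{\delta}{\Real}=0$ the trivial solution above already lies in the required box. Otherwise any feasible $\profile\in\delta\Integer^n$ with positive objective has some $\altslot$ with $\pricei{\altslot}+\delta>0$ and $\xmeasure{\Intervali{\altslot}{\profile}}>0$, and the argument in the second half of the proof of \cref{prop:bounded}---applied with $\pricei{\altslot}>-\delta$---yields $\pricei{k}\geq\pricemin-\delta$ for every $k$. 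Finally, capping each $\pricei{j}$ at $\lceil\pricemax/\delta\rceil\delta$ preserves feasibility and objective value, since for $\pricei{j}>\pricemax$ the set $\sublevelseti{j}{\pricei{j}}$ is disjoint from $\supp(\xmeasurefunction)$ by $\pricemax=-\dist{0}$, so $\xmeasure{\Intervali{j}{\profile}}=0$ regardless.
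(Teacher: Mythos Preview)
Your proposal is essentially the paper's proof. The two-sided estimate you derive is exactly \cref{lemma:doubleinequalities-part-two:new}, which the paper proves from \cref{lemma:twolemmasinone} (the $\fullIntervalifunction{j}$ part) and \cref{lemma:sublevelsets} (the $\sublevelsetifunction{j}$ part); the objective comparison and the second assertion then proceed just as you describe. The one technical difference is in your step~(i): the paper does not track endpoints of $\fullIntervali{j}{\cdot}$ or worry about which slot is adjacent to $j$, but instead argues set-theoretically that any $\xvar\in\fullIntervali{j}{\altprofile}\setminus\fullIntervali{j}{\profile}$ must, after a shift of $\tfrac{1}{2}\lipschitz\delta$, fall outside $\fullIntervali{j}{\altprofile}$ (using \cref{lem:rate-strong}); this bounds the length of the set difference directly and sidesteps the ``adjacency may change'' obstacle you flagged.
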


\noindent

Our proof for ~\cref{thm:cont} then rests on the following result.

\begin{proposition}\label{prop:upper_bound}
We have
 $\OPTdelta{\delta}{\Real} +n\xmeasuremax\lipschitz \delta\pricemax
 - \OPT{\delta\Integer} =O(\delta^{1/8})$.
\end{proposition}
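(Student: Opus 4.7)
The plan is to start from an optimal profile $p^\star$ of~\eqref{upper-level-alter-delta}, which by~\cref{prop:bounded-delta} can be taken in $(\delta\Integer)^n$, and to perturb it into a profile $\tilde p\in(\delta\Integer)^n$ that is feasible for the original problem on $P=\delta\Integer$ and whose revenue is within $O(\delta^{1/8})$ of the~\eqref{upper-level-alter-delta}-objective value of $p^\star$. Writing $C(\delta):=\xmeasuremax\lipschitz\delta + 2\xmeasuremax\sqrt{2\delta/\stronglyconvex} = O(\sqrt\delta)$, we have $\xmeasure{\Intervali{j}{p^\star}}\leq\ymeasurei{j}+C(\delta)$ for every slot~$j$. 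Since $n\xmeasuremax\lipschitz\delta\pricemax=O(\delta)\subseteq O(\delta^{1/8})$, this corrective term is absorbed into the target bound and it suffices to exhibit such $\tilde p$ achieving revenue at least $\sum_j(p_j^\star+\delta)\xmeasure{\Intervali{j}{p^\star}} - O(\delta^{1/8})$.

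I would set $\tilde p_j := p_j^\star + k_j\delta$ for nonnegative integers $k_j$ calibrated to restore feasibility. Raising $p_j$ has two monotone effects on the load $\xmeasure{\Intervali{j}{\tilde p}}$: it contracts the Voronoi-like interval $\fullIntervali{j}{\tilde p}$---whose endpoints are the pivots $\latchfunction{i}{j}$ of~\cref{sec:algoPfinite}---and it shrinks the sublevel set $\{s\colon\distfunction(s-\yvari{j})+\tilde p_j\leq 0\}$, so the load at~$j$ is non-increasing in $k_j$. Strong convexity of $\distfunction$ provides the Lipschitz rate at which each boundary moves in response to a $\delta$-increment, while the density bounds $\xmeasuremin\leq\xmeasurefunction\leq\xmeasuremax$ translate these geometric displacements into quantitative load variations. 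Because shrinking $\fullIntervali{j}{\tilde p}$ enlarges the Voronoi-like intervals of the neighbors, the $k_j$ must be calibrated jointly; the monotonicity of~\cref{lemma:monotonicity} together with the ordered structure from~\cref{prop:intervals} makes a well-defined sweep over the slots possible.

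Given $\tilde p$, the revenue gap decomposes into three contributions: (i) a nonnegative loss $\sum_j p_j^\star(\xmeasure{\Intervali{j}{p^\star}}-\xmeasure{\Intervali{j}{\tilde p}})$ controlled by $\pricemax$ times the total lost mass; (ii) a constant shift $\delta\sum_j\xmeasure{\Intervali{j}{p^\star}} = O(\delta)$; and (iii) a nonnegative gain $\sum_j k_j\delta\xmeasure{\Intervali{j}{\tilde p}}$ that actually improves the revenue. Parameterizing the perturbation magnitude by $\eta:=\delta\max_j k_j$, the feasibility restoration forces $\eta$ to absorb $C(\delta)=O(\sqrt\delta)$ through a geometric ratio fixed by $\stronglyconvex$ and~$\lipschitz$; reciprocally, the lost mass in (i) is bounded by a quantity growing in $\eta$. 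Balancing these two contributions through the interval geometry produces a bound of the form $A\eta^{\alpha}+B\delta^{1/2}\eta^{-\beta}$ with suitable positive exponents, whose minimum over $\eta$ is of the form $\delta^{1/8}$.

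The principal obstacle is the cascading interaction between neighboring slots: any upward correction on one slot can overload its neighbors, so the calibration of the $k_j$ must simultaneously ensure global feasibility while keeping the aggregate revenue loss below $O(\delta^{1/8})$. Executing this rigorously will combine~\cref{fact:increasing}, the intersection decomposition~\eqref{intersections:a}, and the quantitative consequences of strong convexity on the widths of the sublevel sets.
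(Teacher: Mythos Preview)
Your overall framework matches the paper's: start from an optimal $p^\star$ of~\eqref{upper-level-alter-delta} in $(\delta\Integer)^n$, raise prices to obtain a $\tilde p$ feasible for~\eqref{upper-level}, and bound the revenue gap. However, the proposal is missing the central mechanism that makes this work, and the ``balancing'' heuristic you offer for the exponent $1/8$ is not how it actually arises.

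The concrete gap is the one you yourself flag as the principal obstacle: the cascade. If you sweep and raise $p_j$ to shed the excess $C(\delta)$ at slot $j$, the displaced mass spills into $j+1$; raising $p_{j+1}$ to compensate pushes mass into $j+2$; and so on. A one-directional sweep simply pushes all the overflow to the last slot, where there is nowhere for it to go. The paper resolves this with a \emph{sponge slot}: it first raises \emph{all} prices by a common amount $N\delta=O(\delta^{1/4})$, which leaves the $J_j$ unchanged but shrinks every sublevel set. An optimality argument (\cref{lemma:sponge-slot}) then identifies a slot $k^\star$ where this uniform shrinkage alone removes at least $h\sqrt\delta$ of load---enough to absorb the total cascaded overflow from all the other slots. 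Only then does a two-sided wave (\cref{lemma:tool}) push excess inward from both ends toward $k^\star$. Your proposal has no analogue of this absorbing slot, so the sweep you describe is not well-defined at its terminus.

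As for the exponent: there is no $A\eta^\alpha+B\delta^{1/2}\eta^{-\beta}$ trade-off producing $\delta^{1/8}$. The actual chain is that the sponge-slot argument forces $\|\tilde p-p^\star\|_\infty=O(\delta^{1/4})$ (this is where a balancing does occur, inside \cref{lemma:sponge-slot}, to get a $\sqrt\delta$ load drop from a $\delta^{1/4}$ price shift). The revenue gap then involves $|\sublevelseti{j}{p_j^\star}\symdif\sublevelseti{j}{\tilde p_j}|$, which by the H\"older-$\tfrac12$ continuity of sublevel sets (\cref{lemma:sublevelsets}) is $O(\|p^\star-\tilde p\|_\infty^{1/2})=O(\delta^{1/8})$. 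So the $1/8$ is $\tfrac12\cdot\tfrac14$, not the outcome of optimizing a free parameter $\eta$.
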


With this proposition, and the algorithm of Section~\ref{sec:algoPfinite}, the proof of~\cref{thm:cont} is somehow routine.

\begin{proof}[
\cref{thm:cont}]
\Cref{lem:upper} ensures that $\operatorname{UB}(\delta)=\OPTdelta{\delta}{\Real}+n\xmeasuremax\lipschitz \delta\pricemax$ is an upper bound on $\OPT{\Real}$. On the other hand,~\cref{prop:upper_bound} ensures that $
\OPTdelta{\delta}{\Real}+n\xmeasuremax\lipschitz \delta\pricemax - \OPT{\delta\Integer} = O(\delta^{1/8})$. Finally, the algorithm of Section~\ref{subsec:algoPfinite} can be adapted in a straightforward way to solve~\eqref{upper-level-alter-delta} in~$O(n^3|P|^3)$.
\qed\end{proof}

\subsection{Proofs of \cref{prop:upper_bound,prop:bounded-delta}} \quad

\subsubsection{Preliminaries}

We start by establishing two basic lemmas about strongly convex functions.
Recall that a function~$f$ is strongly convex with parameter~$\alpha$ if
\begin{equation}\label{strongconvexity}
f(\theta x + (1-\theta) y)
\leq
\theta f(x) + (1-\theta) f(y) 
-
\frac{\alpha}{2} \theta(1-\theta) (x - y)^2
\end{equation}
holds for all $x,y \in \Real$ and $\theta\in[0,1]$; see~\cite{nesterov14}.

\begin{lemma}\label{lem:strong-lb}
    Let $f\colon \Real \to \Real$ be a strongly convex function with parameter~$\alpha$ and reaching its global minimum at $0$. Then $f(y) - f(x) \geq \frac 1 2 \alpha (y-x)^2$ for all $0 \leq x \leq y$.
\end{lemma}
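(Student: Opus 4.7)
The plan is to use the strong convexity inequality~\eqref{strongconvexity} directly, by writing $x$ as a particular convex combination of the minimizer $0$ and the point $y$. Concretely, for $0<x<y$ I would set $\theta=(y-x)/y\in[0,1)$, so that $x=\theta\cdot 0+(1-\theta)\,y$ and $1-\theta=x/y>0$. Strong convexity applied at $u=0$ and $v=y$ with this $\theta$ yields
\[
f(x)\ \leq\ \theta f(0)+(1-\theta)f(y)-\frac{\alpha}{2}\,\theta(1-\theta)\,y^2.
\]

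Now the global-minimum hypothesis is used: since $f(0)\leq f(x)$, the term $\theta f(0)$ can be replaced by $\theta f(x)$ without reversing the inequality. Rearranging,
\[
(1-\theta)f(x)\ \leq\ (1-\theta)f(y)-\frac{\alpha}{2}\,\theta(1-\theta)\,y^2,
\]
and since $1-\theta>0$ I can divide through to get $f(y)-f(x)\geq \frac{\alpha}{2}\,\theta\,y^2=\frac{\alpha}{2}(y-x)\,y$. Because $0\leq y-x\leq y$, this implies the desired bound $f(y)-f(x)\geq \frac{\alpha}{2}(y-x)^2$.

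Two degenerate sub-cases remain: $x=y$ is trivial since both sides vanish; and $x=0$ cannot be handled by the division step ($1-\theta=0$), but follows by sending $x\to 0^+$ in the estimate just obtained, using the fact that a strongly convex function on $\Real$ is continuous (being convex on an open set). Alternatively, for $y=0$ (which forces $x=0$) the statement is again trivial.

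The one step that requires some care is choosing the \emph{correct} convex combination: writing $x$ as a combination of $0$ and $y$ (rather than, say, writing $y$ as a combination of $x$ and something) is what lets the $f(0)$ term be absorbed via the minimum property. Once this is spotted, the proof is a one-line manipulation, so I do not expect any real obstacle beyond handling the $x=0$ endpoint by continuity.
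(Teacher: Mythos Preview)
Your argument is correct, but it differs from the paper's. The paper applies the strong convexity inequality \eqref{strongconvexity} directly to the pair $(x,y)$ with a free parameter $\theta\in[0,1)$, obtaining
\[
f(y)-f(x)\geq \frac{1}{1-\theta}\bigl(f(\theta x+(1-\theta)y)-f(x)\bigr)+\frac{\alpha}{2}\theta(y-x)^2,
\]
then uses that $f$ is nondecreasing on $[0,\infty)$ (a consequence of the minimum being at $0$ together with convexity) to drop the first term, and finally sends $\theta\to 1$. In contrast, you express $x$ as a convex combination of the minimizer $0$ and $y$, and use only the simpler fact $f(0)\leq f(x)$. Your route actually produces the stronger intermediate bound $f(y)-f(x)\geq\frac{\alpha}{2}(y-x)\,y$ without any limit in the generic case $0<x<y$; the limiting argument is pushed to the boundary case $x=0$ via continuity. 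The paper avoids that case split at the price of needing the limit $\theta\to 1$ uniformly. Both proofs are of comparable length and difficulty.
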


\begin{proof}
For $\theta \in [0,1)$, we have by~\eqref{strongconvexity}
\[     f(y) - f(x) \geq \frac 1 {1-\theta} \left(f\bigl(\theta x+(1-\theta)y\bigl) - f(x) \right) + \frac \alpha 2 \theta  (x-y)^2 
\geq \frac \alpha 2 \theta  (y-x)^2
\, ,       \]
where we used $\theta x+(1-\theta)y\geq x$ and the fact that~$f$ does not decrease on nonnegative numbers. We get the desired inequality by making~$\theta$ go to $1$.%
\qed\end{proof}

\begin{lemma}\label{lem:rate-strong}
    Let $f\colon \Real \to \Real$ be a strongly convex function with parameter $\alpha$. Then for all $x \leq y $ and $h >0$, we have $f(y+h)- f(y) \geq f(x+h) - f(x) + \alpha h (y-x)$.
\end{lemma}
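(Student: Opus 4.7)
The plan is to reduce to the convexity of a simple quadratic perturbation of $f$, and then to apply the elementary monotonicity of forward differences for convex functions.

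First, I would use the standard equivalence that $f$ is strongly convex with parameter $\alpha$ if and only if the function $g(t)\coloneqq f(t)-\frac{\alpha}{2}t^2$ is convex. This lets me strip the quadratic part from the inequality and work with a plain convex function, which is much easier to manipulate than the strongly convex definition directly.

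Next, I would invoke the monotonicity of forward differences for convex functions: for any convex $g$, any $x\leq y$ and any $h>0$, one has $g(y+h)-g(y)\geq g(x+h)-g(x)$. This follows from the monotone slope property $\frac{g(b)-g(a)}{b-a}\leq\frac{g(d)-g(c)}{d-c}$ whenever $a\leq b$, $c\leq d$, $a\leq c$, $b\leq d$, applied either to $(x,x+h,y,y+h)$ if $x+h\leq y$, or to $(x,y,x+h,y+h)$ otherwise; the denominator equals $h$ on both sides in the first case, and $y-x$ in the second, so in both cases the desired inequality drops out immediately.

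Finally, I would substitute $g(t)=f(t)-\frac{\alpha}{2}t^2$ back into the monotonicity inequality and expand the quadratic terms. The four quadratic contributions combine as $-\tfrac{\alpha}{2}\big[(y+h)^2-y^2-(x+h)^2+x^2\big]=-\alpha h(y-x)$ on the left-hand side, which moves to the right to produce exactly $f(x+h)-f(x)+\alpha h(y-x)$. There is no real obstacle here; the only thing to be careful about is the straightforward algebra of the quadratic correction, together with the two-case verification of the forward-difference monotonicity for convex functions.
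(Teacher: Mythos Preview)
Your proof is correct but takes a genuinely different route from the paper. The paper works directly with the strong convexity inequality~\eqref{strongconvexity}: it observes that with $\theta = h/(y+h-x)$ one has simultaneously $x+h = \theta(y+h)+(1-\theta)x$ and $y = \theta x + (1-\theta)(y+h)$, applies~\eqref{strongconvexity} to each, and adds the two resulting inequalities; the quadratic penalty terms $\frac{\alpha}{2}\theta(1-\theta)(y+h-x)^2 = \frac{\alpha}{2}h(y-x)$ then combine to give exactly $\alpha h(y-x)$. Your approach instead factors the problem: you strip off the quadratic via $g=f-\tfrac{\alpha}{2}t^2$, reduce to the forward-difference monotonicity of a plain convex function, and recover the $\alpha h(y-x)$ term from the quadratic bookkeeping. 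Your argument is more modular and reusable (it cleanly separates ``strong'' from ``convex''), while the paper's is entirely self-contained from the stated definition and avoids invoking the $g$-characterization or the slope-monotonicity lemma as external facts. Both are equally short once the relevant identities are in hand.
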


\begin{proof}
Set $\theta = h/(y+h-x)$, and observe that $x+h = \theta (y+h) + (1-\theta) x$. Applying~\eqref{strongconvexity} at~$x+h$, we find $ f(x+h) \leq \theta f(y+h) + (1-\theta) f(x) - \frac{\alpha}{2} h (y-x) $, where we have used $\theta(1-\theta)(y+h-x)^2 = h (y-x)$.
Similarly, $ y = \theta x + (1-\theta) (y+h) $ yields $ f(y) \leq \theta f(x) + (1-\theta) f(y+h) - \frac{\alpha}{2} h (y-x) $. The desired inequality follows by adding the last two results and rearranging the terms.  \qed\end{proof}

Our proofs for \cref{prop:bounded-delta,prop:upper_bound} rest on a series of ancillary results.
We emphasize that the sets considered in the sequel will be measurable thanks to \cref{prop:intervals}.
In the upcoming sections, we use~$|X|$ to denote the Lebesgue measure of a measurable set~$X$, and~$A\symdif B=(A\setminus B)\cup(B\setminus A)$ to denote the symmetric difference between two sets~$A$ and~$B$.

\subsubsection{Proof of \cref{prop:bounded-delta}}

The next result relies on \cref{lem:rate-strong}.
We recall that $\Intervali{j}{\profilemdelta{\delta}} 
=\fullIntervali{j}{\profilemdelta{\delta}} \cap
\sublevelseti{j}{\pricemdeltai{\delta}{j}} 
$, where~$\fullIntervalifunction{j}
$ and~$\sublevelsetifunction{j}$ are defined in \cref{sec:preliminaries}.

\begin{lemma}\label{lemma:twolemmasinone}
\renewcommand{\altprofile}{\altpricefunction}%
\renewcommand{\altpricefunction}{p'}%
For all $\eta,\eta'\geq0$, all~$\profile,\altprofile\in\Real^n$ such that $\pricei{k}-\eta'\leq\altpricei{k}\leq\pricei{k}+\eta$ for $k\in[n]$, and all~$j\in[n]$, we have $
|\fullIntervali{j}{\altprofile}\setminus\fullIntervali{j}{\profile}|
\leq
(\eta+\eta')\lipschitz
$ and $
|\fullIntervali{j}{\profile}\setminus \fullIntervali{j}{\altprofile}| \leq (\eta+\eta')\lipschitz 
$. 
\end{lemma}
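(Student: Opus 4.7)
The plan is to show that each endpoint of the interval $\fullIntervali{j}{\profile}$ is $(\lipschitz/2)$-Lipschitz with respect to the pairwise price differences. Since the hypothesis yields $|(\altpricei{k}-\altpricei{j})-(\pricei{k}-\pricei{j})|\leq\eta+\eta'$ for all $k$, and the interval has two endpoints, each one-sided difference will be bounded by the sum of the shifts of the two endpoints, namely $(\eta+\eta')\lipschitz$.

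First, I would establish that for $i<k$ the increasing function $\pivotij{i}{k}{s}:=\dist{s-\yvari{i}}-\dist{s-\yvari{k}}$ (monotonicity comes from \cref{fact:increasing}) grows with ``slope'' at least $\stronglyconvex(\yvari{k}-\yvari{i})$. Indeed, applying \cref{lem:rate-strong} to $f=\distfunction$ with $h=s'-s\geq 0$, $x=s-\yvari{k}$ and $y=s-\yvari{i}$ (so that $x<y$ since $\yvari{i}<\yvari{k}$) gives $\pivotij{i}{k}{s'}-\pivotij{i}{k}{s}\geq\stronglyconvex(s'-s)(\yvari{k}-\yvari{i})$. Hence the inverse $\latchfunction{i}{k}$ is Lipschitz with constant at most $[\stronglyconvex(\yvari{k}-\yvari{i})]^{-1}\leq\lipschitz/2$, the last inequality coming from $\yvari{k}-\yvari{i}\geq\yvari{i+1}-\yvari{i}$ and the definition of $\lipschitz$.

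Next, using \cref{prop:intervals,lemma:monotonicity} together with the $\pm\infty$ conventions for $\latchfunction{i}{k}$ introduced in \cref{subsec:algoPfinite}, I would express the endpoints of $\fullIntervali{j}{\profile}$ as
\[
\ell_j(\profile)=\max_{k<j}\latch{k}{j}{\pricei{j}-\pricei{k}},\qquad r_j(\profile)=\min_{k>j}\latch{j}{k}{\pricei{k}-\pricei{j}},
\]
so that $\fullIntervali{j}{\profile}=[\ell_j(\profile),r_j(\profile)]$ when $\ell_j(\profile)\leq r_j(\profile)$ and is empty otherwise. Both endpoints depend on $\profile$ only through pairwise differences, so combining the Lipschitz bound from the previous step with the elementary inequalities $|\min_k f_k-\min_k g_k|\leq\max_k|f_k-g_k|$ and its analogue for $\max$ yields $|r_j(\profile)-r_j(\altprofile)|\leq(\eta+\eta')\lipschitz/2$ and $|\ell_j(\profile)-\ell_j(\altprofile)|\leq(\eta+\eta')\lipschitz/2$. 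Any $s\in\fullIntervali{j}{\profile}\setminus\fullIntervali{j}{\altprofile}$ must then either lie below the shifted left endpoint or above the shifted right endpoint of $\fullIntervali{j}{\altprofile}$; summing the two contributions bounds the one-sided difference by $(\eta+\eta')\lipschitz$, and the other inclusion follows by symmetry.

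The main obstacle will be handling the degenerate cases cleanly: $\fullIntervali{j}{\profile}$ may be empty or unbounded (endpoints at $\pm\infty$), and the index $k$ achieving the $\min$ or $\max$ may differ between $\profile$ and $\altprofile$. These issues are resolved by taking the $\min$ and $\max$ over the \emph{same} full index sets $\{k>j\}$ and $\{k<j\}$ with the $\pm\infty$ conventions, which lets one compare the same family of $(\lipschitz/2)$-Lipschitz functions in both profiles and makes the endpoint-shift estimate apply uniformly.
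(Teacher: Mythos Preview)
Your proof is correct and takes a genuinely different, more structural route than the paper. The paper argues locally: it splits $\fullIntervali{j}{\altprofile}\setminus\fullIntervali{j}{\profile}$ into a ``left'' part $E$ (where some $i<j$ beats $j$ under $\profile$) and a ``right'' part $F$ (where some $k>j$ does), and shows via \cref{lem:rate-strong} that any point of $E$ shifted left by $\tfrac{1}{2}\lipschitz(\eta+\eta')$ leaves $E$, hence $|E|\leq\tfrac{1}{2}\lipschitz(\eta+\eta')$, and similarly for $F$. You instead package the same strong-convexity estimate as a global $(\lipschitz/2)$-Lipschitz bound on the inverses $\latchfunction{i}{k}$, then read off the endpoints of $\fullIntervali{j}{\profile}$ as $\max_{k<j}\latch{k}{j}{\pricei{j}-\pricei{k}}$ and $\min_{k>j}\latch{j}{k}{\pricei{k}-\pricei{j}}$ and control their shift directly. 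Your approach is arguably more transparent and reusable (the Lipschitz property of the endpoints could be invoked elsewhere), while the paper's avoids importing the endpoint formulas from \cref{subsec:algoPfinite} and handles the degenerate cases implicitly. Note also that under the standing strong-convexity assumption of Section~\ref{sec:algoPisR} the functions $\pivotijfunction{i}{k}$ are surjective onto $\Real$, so the $\pm\infty$ conventions are never actually triggered; your handling of the empty-interval case via $\ell_j(\altprofile)>r_j(\altprofile)$ is fine regardless.
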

\begin{proof}
\renewcommand{\altprofile}{\altpricefunction}%
\renewcommand{\altpricefunction}{p'}%
\newcommand{\pricedistance}{\tau}%

Since $\pricei{k}-\eta'\leq\altpricei{k}\leq\pricei{k}+\eta$ rewrites as $\altpricei{k}-\eta\leq\pricei{k}\leq\altpricei{k}+\eta'$, we only show the first inequality. The second inequality can be shown similarly by exchanging the roles of~$\profile$ and~$\altprofile$.

We focus on $\fullIntervali{j}{\altprofile}\setminus\fullIntervali{j}{\profile}$, which we rewrite as~$E\cup F$, where
\[
\begin{array}{l}  
E = \{
\xvar\in\fullIntervali{j}{\altprofile} \colon \exists i< j, \, 
\dist{\xvar-\yvari{i}}+\pricei{i}
< 
\dist{\xvar-\yvari{j}}+\pricei{j}
\}
\, ,
\\
F = \{
\xvar\in\fullIntervali{j}{\altprofile} \colon \exists k > j , \, 
\dist{\xvar-\yvari{k}}+\pricei{k}
< 
\dist{\xvar-\yvari{j}}+\pricei{j}
\}
\, .
\end{array}
\]
%
Let~$\xvar\in E$ and~$\xivariable\geq\eta+\eta'$. 
By successively using the assumption on $\profile$ and~$\altprofile$, the definition of~$E$, and \cref{lem:rate-strong} (with $x=\xvar-\frac{1}{2}\lipschitz\xivariable-\yvari{j}$, $y=\xvar-\yvari{j}$, and $h=\yvari{j}-\yvari{i}$), we find $i<j$ such that
$
\altpricei{j}-\altpricei{i}
\geq
\pricei{j}-\pricei{i} -\eta-\eta'
>
\dist{\xvar-\yvari{i}} - \dist{\xvar-\yvari{j}} -\eta-\eta'
\geq
\dist{\xvar-\frac{1}{2}\lipschitz\xivariable-\yvari{i}} - \dist{\xvar-\frac{1}{2}\lipschitz\xivariable-\yvari{j}} + \frac{1}{2}\lipschitz\stronglyconvex(\yvari{j}-\yvari{i})\xivariable -\eta-\eta'
\geq
\dist{\xvar-\frac{1}{2}\lipschitz\xivariable-\yvari{i}} - \dist{\xvar-\frac{1}{2}\lipschitz\xivariable-\yvari{j}} 
$, which implies $\xvar-\frac{1}{2}\lipschitz\xivariable\notin\fullIntervali{j}{\altprofile}$ and therefore $\xvar-\frac{1}{2}\lipschitz\xivariable\notin E$. Since this holds for all~$\xivariable\geq\eta+\eta'$,
we have 
$|E|\leq \frac{1}{2}\lipschitz(\eta+\eta')$.
Similarly, let~$\xvar\in F$ and~$\xivariable\geq\eta+\eta'$. 
By successively using \cref{lem:rate-strong} (with $x=\xvar-\yvari{k}$, $y=\xvar+\frac{1}{2}\lipschitz\xivariable-\yvari{k}$, and $h=\yvari{k}-\yvari{j}$), the definition of~$F$, and the assumption on $\profile$ and~$\altprofile$, we find $k>j$ such that
$
\dist{\xvar+\frac{1}{2}\lipschitz\xivariable-\yvari{j}} - \dist{\xvar+\frac{1}{2}\lipschitz\xivariable-\yvari{k}} 
\geq
\dist{\xvar-\yvari{j}} - \dist{\xvar-\yvari{k}} +\frac{1}{2}\lipschitz\stronglyconvex(\yvari{k}-\yvari{j})\xivariable
>
\pricei{k}-\pricei{j}
+\eta+\eta' 
\geq
\altpricei{k}-\altpricei{j} 
$, which implies $\xvar+\frac{1}{2}\lipschitz\xivariable\notin\fullIntervali{j}{\altprofile}$ and therefore $\xvar+\frac{1}{2}\lipschitz\xivariable\notin F$. 
Since this holds for all~$\xivariable\geq\eta+\eta'$,
we have 
$
|F|\leq \frac{1}{2}\lipschitz(\eta+\eta')$.
Thus, $
|\fullIntervali{j}{\altprofile}\setminus\fullIntervali{j}{\profile}|
\leq
|E|+|F|
\leq
\lipschitz(\eta+\eta')
$. 
\qed\end{proof}

Next, we show that the length of the sublevel set~$\sublevelsetifunction{j}$ is Hölder continuous with  exponent~${1}/{2}$.

\begin{lemma}\label{lemma:sublevelsets}
For all $\altaltprice,\altaltprice^\prime\in\Real$ and all~$j\in[n]$,
 we have
$
|\sublevelseti{j}{\altaltprice}\symdif \sublevelseti{j}{\altaltprice^\prime}| 
 \leq
2\sqrt{ {2}| \altaltprice^\prime-\altaltprice
 |/{\stronglyconvex} }  
$.
\end{lemma}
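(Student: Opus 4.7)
My plan is to exploit the fact, already established in \cref{prop:intervals}, that each $\sublevelseti{j}{q}$ is a closed interval, so the symmetric difference is controlled by how fast the two endpoints move as the level $-q$ changes. The strong convexity of $\distfunction$ (via \cref{lem:strong-lb}) gives a quadratic lower bound on $\distfunction$ away from its minimum, which translates into a square-root upper bound on endpoint displacement.

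Concretely, I would assume without loss of generality that $\altaltprice\leq\altaltprice^\prime$, so that $\sublevelseti{j}{\altaltprice^\prime}\subseteq\sublevelseti{j}{\altaltprice}$ and the symmetric difference reduces to $\sublevelseti{j}{\altaltprice}\setminus\sublevelseti{j}{\altaltprice^\prime}$. If $\sublevelseti{j}{\altaltprice}$ is empty the claim is trivial, so I may write $\sublevelseti{j}{\altaltprice}=[a,b]$ and $\sublevelseti{j}{\altaltprice^\prime}=[a^\prime,b^\prime]$ with $a\leq a^\prime\leq \yvari{j}\leq b^\prime\leq b$ (using that $\distfunction$ attains its unique minimum at $0$, so $\yvari{j}\in\sublevelseti{j}{\altaltprice^\prime}$ whenever the latter is nonempty; the case where $\sublevelseti{j}{\altaltprice^\prime}$ is empty is handled analogously with the single endpoint argument below). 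Then
\[
|\sublevelseti{j}{\altaltprice}\setminus\sublevelseti{j}{\altaltprice^\prime}|=(a^\prime-a)+(b-b^\prime).
\]

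For the right endpoints, continuity of $\distfunction$ together with $b^\prime<b$ (if the difference is nontrivial) gives $\dist{b-\yvari{j}}=-\altaltprice$ and $\dist{b^\prime-\yvari{j}}=-\altaltprice^\prime$, with $0\leq b^\prime-\yvari{j}\leq b-\yvari{j}$. Applying \cref{lem:strong-lb} to the shifted function $\xvar\mapsto\dist{\xvar}$ (which is strongly convex with parameter $\stronglyconvex$ and minimized at $0$) yields
\[
\altaltprice^\prime-\altaltprice=\dist{b-\yvari{j}}-\dist{b^\prime-\yvari{j}}\geq \tfrac{\stronglyconvex}{2}(b-b^\prime)^2,
\]
hence $b-b^\prime\leq\sqrt{2(\altaltprice^\prime-\altaltprice)/\stronglyconvex}$. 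The argument for $a^\prime-a$ is symmetric (apply \cref{lem:strong-lb} to $\xvar\mapsto\dist{-\xvar}$, which is also strongly convex with parameter $\stronglyconvex$ and minimized at $0$). Summing the two bounds gives the claim.

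I do not foresee a genuine obstacle: the only point to be a little careful about is the case distinction when one of the sublevel sets is empty or reduces to a point, and handling unbounded sublevel sets — but strong convexity forces $\distfunction(\xvar)\to+\infty$ as $|\xvar|\to\infty$, so every sublevel set is bounded, and the endpoint argument applies verbatim.
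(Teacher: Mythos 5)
Your proposal is correct and follows essentially the same route as the paper: reduce by monotonicity to the nested case, locate the four interval endpoints around $\yvari{j}$, and apply \cref{lem:strong-lb} once on each side of the minimum to get the two square-root bounds. The only place the paper is more explicit is the degenerate case where $\sublevelseti{j}{\altaltprice^\prime}$ is empty (i.e.\ $\altaltprice^\prime>-\min\distfunction$), which it handles by truncating both prices at $-\min\distfunction$ and using non-expansiveness of that projection; your ``single endpoint'' remark does work, but needs the observation that $-\altaltprice-\min\distfunction<\altaltprice^\prime-\altaltprice$ in that case.
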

\begin{proof}
Without loss of generality, we assume $\altaltprice\leq\altaltprice^\prime$.  
We have
$\sublevelseti{j}{\altaltprice}\supseteq\sublevelseti{j}{\altaltprice^\prime}$ and  it follows from \cref{prop:intervals} that
$|\sublevelseti{j}{\altaltprice}\symdif \sublevelseti{j}{\altaltprice^\prime}| = |\sublevelseti{j}{\altaltprice}\setminus \sublevelseti{j}{\altaltprice^\prime}|= |\sublevelseti{j}{\altaltprice}| -|\sublevelseti{j}{\altaltprice^\prime}|$.
It remains to show that $
 |\sublevelseti{j}{\altaltprice}| -|\sublevelseti{j}{\altaltprice^\prime}| 
 \leq
2\sqrt{ {2}( \altaltprice^\prime-\altaltprice
 )/{\stronglyconvex} } $.

We first assume that $\altaltprice\leq\altaltprice^\prime\leq -\min\distfunction$, so that~$-\altaltprice$ and~$-\altaltprice^\prime$ belong to the codomain of~$\distfunction$.
Since~$\xvar\mapsto\dist{\xvar-\yvari{j}}+ \altaltprice$ is strongly convex, its sublevel sets are compact sets and therefore finite intervals of~$\Real$. It follows that we can find $\zvar\leq\altzvar\leq\yvari{j}\leq\altxvar\leq\xvar$
such that 
$\dist{\zvar-\yvari{j}}=\dist{\xvar-\yvari{j}}=- \altaltprice$ and $\dist{\zvar^\prime-\yvari{j}}=\dist{\xvar^\prime-\yvari{j}}=- \altaltprice^\prime$. 
Using \cref{lem:strong-lb} with $f=\distfunction$, $x=\altxvar-\yvari{j}$, and $y=\xvar-\yvari{j}$, we find $
\altaltprice^\prime-\altaltprice
=
\dist{\xvar-\yvari{j}} - \dist{\altxvar-\yvari{j}}
\geq \frac{1}{2} \stronglyconvex (\xvar-\altxvar)^2
$, which implies $\xvar-\altxvar\leq \sqrt{2(\altaltprice^\prime-\altaltprice)/\stronglyconvex}$.
Using \cref{lem:strong-lb} with $f\colon\xvar\mapsto\dist{-\xvar}$, $x=\yvari{j}-\altzvar$, and $y=\yvari{j}-\zvar$, we find $\altaltprice^\prime-\altaltprice = \dist{\zvar-\yvari{j}} - \dist{\altzvar-\yvari{j}}\geq \frac{1}{2} \stronglyconvex (\altzvar-\zvar)^2
$, which implies $\altzvar-\zvar\leq \sqrt{2(\altaltprice^\prime-\altaltprice)/\stronglyconvex}$.
Combining the last two results gives 
$
 |\sublevelseti{j}{\altaltprice}| -|\sublevelseti{j}{\altaltprice^\prime}| 
=
 (\xvar - \zvar)-(\altxvar - \altzvar)
 \leq
2\sqrt{ {2}( \altaltprice^\prime-\altaltprice
 )/{\stronglyconvex} } 
 $.

\newcommand{\generalprice}{\tilde{\altaltprice}}
In the general case, consider $\generalprice=\min(\altaltprice,-\min\distfunction)$ and $\generalprice^\prime=\min(\altaltprice^\prime,-\min\distfunction)$. Note that $\generalprice
\leq
\generalprice^\prime$.
By construction, we have $|\sublevelseti{j}{\altaltprice}|=|\sublevelseti{j}{\generalprice}|$ and $ |\sublevelseti{j}{\altaltprice^\prime}|=|\sublevelseti{j}{\generalprice^\prime}|$.
Since the projection $x\mapsto\min(x,-\min\distfunction)$ is non-expansive, we find $\generalprice^\prime-\generalprice\leq\altaltprice^\prime-\altaltprice$. Applying the previous result to $\generalprice\leq\generalprice^\prime\leq-\min\distfunction$ yields $
 |\sublevelseti{j}{\altaltprice}| -|\sublevelseti{j}{\altaltprice^\prime}| 
=
|\sublevelseti{j}{\generalprice}| -|\sublevelseti{j}{\generalprice^\prime}| 
 \leq
2\sqrt{ {2}( \generalprice^\prime-\generalprice
 )/{\stronglyconvex} } 
 \leq
2\sqrt{ {2}( \altaltprice^\prime-\altaltprice
 )/{\stronglyconvex} } 
 $.
\qed\end{proof}

The next lemma is a consequence of \cref{lemma:twolemmasinone,lemma:sublevelsets}. 
It tells us to what extent a feasible price profile of the modified problem~\eqref{upper-level-alter-delta} may violate the capacity constraints in the original problem~\eqref{upper-level}.

\begin{lemma}
\label{lemma:doubleinequalities-part-two:new}
Let~$\delta>0$.
For all~$\profile\in\Real^n$ and~$j\in[n]$, we have
 \begin{equation*} 
\xMeasure{\Intervali{j}{\profile}} 
- \xmeasuremax\lipschitz \delta
\leq
\xMeasure{\Intervali{j}{\profilemdelta{\delta}}}
\leq
\xMeasure{\Intervali{j}{\profile}} + \xmeasuremax\lipschitz \delta 
+ 2\xmeasuremax\sqrt{2 \delta /\stronglyconvex}  
\, . %
\end{equation*}
\end{lemma}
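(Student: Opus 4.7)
The plan is to decompose the symmetric difference $\Intervali{j}{\profile}\symdif\Intervali{j}{\profilemdelta{\delta}}$ into contributions from the $\fullIntervalifunction{j}$ part and the $\sublevelsetifunction{j}$ part, and then use the two preceding lemmas to bound each contribution separately. The ingredient that makes the two asymmetric bounds in the statement work is the observation that rounding down, i.e.\ $\pricemdeltai{\delta}{k}\in[\pricei{k}-\delta,\pricei{k}]$, produces a one-sided inclusion of the sublevel sets: since $\pricemdeltai{\delta}{j}\leq\pricei{j}$, we have $\sublevelseti{j}{\pricei{j}}\subseteq \sublevelseti{j}{\pricemdeltai{\delta}{j}}$, so the sublevel-set error only contributes in one direction.

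Concretely, I would first write
\[
\Intervali{j}{\profilemdelta{\delta}}\setminus\Intervali{j}{\profile}
\subseteq
\bigl(\fullIntervali{j}{\profilemdelta{\delta}}\setminus\fullIntervali{j}{\profile}\bigr)
\cup
\bigl(\sublevelseti{j}{\pricemdeltai{\delta}{j}}\setminus \sublevelseti{j}{\pricei{j}}\bigr)
\]
and symmetrically for the difference with the roles swapped, using the decomposition $\Intervali{j}{\cdot}=\fullIntervali{j}{\cdot}\cap\sublevelseti{j}{\cdot}$ from~\eqref{intersections:a} and the elementary set identity $(A\cap B)\setminus(A'\cap B')\subseteq(A\setminus A')\cup(B\setminus B')$. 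Then I would apply Lemma~\ref{lemma:twolemmasinone} with $\eta=0$ and $\eta'=\delta$ (since $\pricei{k}-\delta\leq\pricemdeltai{\delta}{k}\leq\pricei{k}$) to bound both $|\fullIntervali{j}{\profilemdelta{\delta}}\setminus\fullIntervali{j}{\profile}|$ and $|\fullIntervali{j}{\profile}\setminus\fullIntervali{j}{\profilemdelta{\delta}}|$ by $\lipschitz\delta$, and Lemma~\ref{lemma:sublevelsets} with $\altaltprice=\pricemdeltai{\delta}{j}$ and $\altaltprice'=\pricei{j}$ to bound $|\sublevelseti{j}{\pricemdeltai{\delta}{j}}\setminus\sublevelseti{j}{\pricei{j}}|$ by $2\sqrt{2\delta/\stronglyconvex}$. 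The other sublevel-set difference is empty, which is precisely what breaks the symmetry between the two bounds.

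Finally, since the density of $\xmeasurefunction$ is upper bounded by $\xmeasuremax$, I pass from Lebesgue measure to $\xmeasurefunction$-measure by multiplying by $\xmeasuremax$, and combine via
\[
\xMeasure{\Intervali{j}{\profilemdelta{\delta}}}-\xMeasure{\Intervali{j}{\profile}}
=
\xMeasure{\Intervali{j}{\profilemdelta{\delta}}\setminus\Intervali{j}{\profile}}
-
\xMeasure{\Intervali{j}{\profile}\setminus\Intervali{j}{\profilemdelta{\delta}}}
\]
to read off the desired two-sided inequality.

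The main obstacle is conceptual rather than computational: one must correctly identify which side of the symmetric difference picks up the sublevel-set contribution and which does not. Once the monotonicity $\pricemdeltai{\delta}{j}\leq\pricei{j}\Rightarrow\sublevelseti{j}{\pricei{j}}\subseteq\sublevelseti{j}{\pricemdeltai{\delta}{j}}$ is invoked, the remaining calculation is just a bookkeeping step applying the two preceding lemmas.
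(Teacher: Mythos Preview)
Your proposal is correct and essentially follows the paper's own proof. Both arguments apply \cref{lemma:twolemmasinone} with $\eta=0,\eta'=\delta$ and \cref{lemma:sublevelsets}, and both hinge on the one-sided inclusion $\sublevelseti{j}{\pricei{j}}\subseteq\sublevelseti{j}{\pricemdeltai{\delta}{j}}$; the only cosmetic difference is that the paper uses the identity $A\cap B\subseteq(C\cap B)\cup(A\setminus C)$ to obtain direct inclusions of the form $\Intervali{j}{\profile}\subseteq\Intervali{j}{\profilemdelta{\delta}}\cup(\cdots)$, whereas you bound the set differences via $(A\cap B)\setminus(A'\cap B')\subseteq(A\setminus A')\cup(B\setminus B')$ and then combine them through $\xMeasure{\cdot}-\xMeasure{\cdot}=\xMeasure{\cdot\setminus\cdot}-\xMeasure{\cdot\setminus\cdot}$.
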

\begin{proof}
\renewcommand{\altprofile}{\altpricefunction}%
\renewcommand{\altpricefunction}{p'}%

Applying \cref{lemma:twolemmasinone} to~$\profile$ and~$
\profilemdelta{\delta}$  with parameters $\eta=0$ and $\eta'=\delta$, we find
$
|\fullIntervali{j}{\profile}\setminus \fullIntervali{j}{\profilemdelta{\delta}}| \leq \lipschitz \delta
$ and $
|\fullIntervali{j}{\profilemdelta{\delta}}\setminus\fullIntervali{j}{\profile}|
\leq
\lipschitz \delta
$, which imply
$
\xMeasure{\fullIntervali{j}{\profile}\setminus \fullIntervali{j}{\profilemdelta{\delta}}} \leq \xmeasuremax\lipschitz \delta
$ and $
\xMeasure{\fullIntervali{j}{\profilemdelta{\delta}}\setminus\fullIntervali{j}{\profile}}
\leq
\xmeasuremax\lipschitz \delta
$.

Now, it
%
%
follows from~$\pricemdeltai{\delta}{j}\leq \pricei{j}$ and the definition of~$\sublevelsetifunction{j}$ that $\sublevelseti{j}{\pricei{j}}\subseteq \sublevelseti{j}{\pricemdeltai{\delta}{j}}$.
Using this inclusion and the identity 
$
A\cap B
\subseteq
 (C\cap B) \cup (A\setminus C)
$, we find 
\[
\begin{array}{l}
\Intervali{j}{\profile} 
\refereq{\eqref{intersections:a}}{=} 
\fullIntervali{j}{\profile} \cap
\sublevelseti{j}{\pricei{j}} 
\subseteq
\fullIntervali{j}{\profile} \cap
\sublevelseti{j}{\pricemdeltai{\delta}{j}} 
\qquad\\\qquad\qquad\hfill
\subseteq
\big(\fullIntervali{j}{\profilemdelta{\delta}}\cap
\sublevelseti{j}{\pricemdeltai{\delta}{j}}\big)\cup\big(\fullIntervali{j}{\profile}\setminus\fullIntervali{j}{\profilemdelta{\delta}}\big)
=
\Intervali{j}{\profilemdelta{\delta}}\cup\big(\fullIntervali{j}{\profile}\setminus\fullIntervali{j}{\profilemdelta{\delta}}\big)
\, .
\end{array}
\]
It follows 
from~$
\xMeasure{\fullIntervali{j}{\profile}\setminus \fullIntervali{j}{\profilemdelta{\delta}}} \leq \xmeasuremax\lipschitz \delta
$
that
$
\bigxmeasure{\Intervali{j}{\profile}}
\leq 
\bigxmeasure{
\Intervali{j}{\profilemdelta{\delta}}}
+
\bigxmeasure{\fullIntervali{j}{\profile}\setminus\fullIntervali{j}{\profilemdelta{\delta}}}
\leq
\bigxmeasure{
\Intervali{j}{\profilemdelta{\delta}}}
+ \xmeasuremax\lipschitz \delta
$, which is the first inequality.  
Similarly, using the same identity twice, we find
\[
\begin{array}{l}
\Intervali{j}{\profilemdelta{\delta}} 
\refereq{\eqref{intersections:a}}{=}
\fullIntervali{j}{\profilemdelta{\delta}} \cap
\sublevelseti{j}{\pricemdeltai{\delta}{j}} 
\subseteq
\big(\fullIntervali{j}{\profile}\cap\sublevelseti{j}{\pricemdeltai{\delta}{j}}
\big)\cup\big(\fullIntervali{j}{\profilemdelta{\delta}}\setminus\fullIntervali{j}{\profile}\big)
\\\qquad\qquad\quad
=
\big(\sublevelseti{j}{\pricemdeltai{\delta}{j}}
\cap\fullIntervali{j}{\profile}\big)\cup\big(\fullIntervali{j}{\profilemdelta{\delta}}\setminus\fullIntervali{j}{\profile}\big)
\\\qquad\qquad\qquad\quad
\subseteq
\big(\sublevelseti{j}{\pricei{j}}\cap\fullIntervali{j}{\profile}
\big)
\cup
\big(\sublevelseti{j}{\pricemdeltai{\delta}{j}}\setminus\sublevelseti{j}{\pricei{j}}\big)
\cup
\big(\fullIntervali{j}{\profilemdelta{\delta}}\setminus\fullIntervali{j}{\profile}\big)
\\\qquad\qquad\qquad\qquad\qquad\qquad
=
\Intervali{j}{\profile}
\cup
\big(\sublevelseti{j}{\pricemdeltai{\delta}{j}}\setminus\sublevelseti{j}{\pricei{j}}\big)
\cup
\big(\fullIntervali{j}{\profilemdelta{\delta}}\setminus\fullIntervali{j}{\profile}\big)
\, .
\end{array}
\]
It follows 
from $
\xMeasure{\fullIntervali{j}{\profilemdelta{\delta}}\setminus\fullIntervali{j}{\profile}}
\leq
\xmeasuremax\lipschitz \delta
$
and \cref{lemma:sublevelsets}
that
$
\bigxmeasure{\Intervali{j}{\profilemdelta{\delta}}}
\leq
\bigxmeasure{\Intervali{j}{\profile}}
+
\bigxmeasure{\fullIntervali{j}{\profilemdelta{\delta}}\setminus\fullIntervali{j}{\profile}}
+
\bigxmeasure{\sublevelseti{j}{\pricemdeltai{\delta}{j}}\setminus\sublevelseti{j}{\pricei{j}}}
\leq
\bigxmeasure{\Intervali{j}{\profile}}
+
\xmeasuremax\lipschitz \delta
+
2\xmeasuremax \sqrt{ 2\delta/\stronglyconvex }  
$, which is the second inequality. 
\qed\end{proof}

\noindent
The~$\xmeasuremax\lipschitz \delta$ terms 
in \cref{lemma:doubleinequalities-part-two:new}
are due to deviations of the preference intervals in the modified problem~\eqref{upper-level-alter-delta} after discretization of the prices. An additional term~$2\xmeasuremax\sqrt{2 \delta/\stronglyconvex}$ appears in 
the second inequality
because rounded down prices in~\eqref{upper-level-alter-delta} may attract users that refuse service in~\eqref{upper-level}.
Note that in the particular case~$\profile\in(\delta\Integer
)^n$, we have $\Intervali{j}{\profilemdelta{\delta}}
=
\Intervali{j}{\profile} 
$ and thus $\xMeasure{\Intervali{j}{\profilemdelta{\delta}}}
=
\xMeasure{\Intervali{j}{\profile}} 
$.

Our proof for \cref{prop:bounded-delta} follows from \cref{lemma:doubleinequalities-part-two:new}.

\begin{proof}[
\cref{prop:bounded-delta}]
Following the lines of the proof of \cref{prop:bounded}, we can show that~\eqref{upper-level-alter-delta} admits an optimal solution~$\profile$ in $[\pricemindelta{\delta}-\delta,\pricemax+\delta]^n$.
Since by construction
$\altprofile=(\pricemdeltai{\delta}{1},\dots,\pricemdeltai{\delta}{n})$
satisfies $\Intervali{j}{\profilemdelta{\delta}} = \Intervali{j}{\altprofilemdelta{\delta}}$ for~$j\in[n]$, we find that
$\altprofile\in\big([\pricemindelta{\delta}-\delta,\pricemax+\delta]\cap\delta\Integer\big)^n$ is an optimal solution of~\eqref{upper-level-alter-delta} too. 
Hence $\OPTdelta{\delta}{\Real}=\OPTdelta{\delta}{\delta\Integer}$.

It remains to show the inequality $\OPTdelta{\delta}{\delta\Integer} 
+n\xmeasuremax\lipschitz \delta\pricemax
\geq \OPT{\Real}$.
Let $\profile\in[\pricemin,\pricemax]^n$ be an optimal solution of~\eqref{upper-level}, which exists according to \cref{prop:bounded}, and consider its discrete approximation $\profilemdelta{\delta}$. 
Using the second inequality in \cref{lemma:doubleinequalities-part-two:new}, we find $\xMeasure{\Intervali{j}{\profilemdelta{\delta}}}
\leq
\xMeasure{\Intervali{j}{\profile}} + \xmeasuremax\lipschitz \delta 
+ 2\xmeasuremax\sqrt{2 \delta /\stronglyconvex} 
\leq 
\ymeasurei{j} + \xmeasuremax\lipschitz \delta 
+ 2\xmeasuremax\sqrt{2 \delta /\stronglyconvex} 
$ for all $j\in[n]$, which implies that~$\profile$ is a feasible solution of~\eqref{upper-level-alter-delta}. 
Using $\pricemdeltai{\delta}{j} + \delta \geq \pricei{j} $ and the first inequality in \cref{lemma:doubleinequalities-part-two:new}, we find 
$
\OPTdelta{\delta}{\delta\Integer} 
\geq
\sum_{j=1}^n (\pricemdeltai{\delta}{j} + \delta )
\xmeasure{\Intervali{j}{\profilemdelta{\delta}}}
\geq 
\sum_{j=1}^n \pricei{j}
\big(\xMeasure{\Intervali{j}{\profile}} 
- \xmeasuremax\lipschitz \delta\big)
\geq 
\OPT{\Real} - n\xmeasuremax\lipschitz \delta\pricemax$.
\qed\end{proof}

\subsubsection{Proof of \cref{prop:upper_bound}} 
\label{section:monitoring}

\noindent
The proof of \cref{prop:upper_bound} requires a technical result.

\renewcommand{\slot}{j}%
\renewcommand{\altslot}{i}%
\renewcommand{\altaltslot}{k}%
\begin{lemma}\label{lemma:reduction}
\renewcommand{\altprofile}{\altpricefunction}%
\renewcommand{\altpricefunction}{p'}%
There exists $\cstgain>0$ such that, for all~$\profile\in
[\pricemindelta{\delta},+\infty)
^n$, $\slot\in[n]$, and $\Delta\geq 0$, the following holds: 
\begin{easylist}
\ListProperties(Numbers=r,Mark=.,FinalMark={)},FinalMark3={)})
\item $\bullet$
$[\xvar,\xvar+ \cstgain\Delta]\subseteq\Intervali{\slot}{\profile}$ for all $\xvar\in\bigIntervali{\slot}{(\pricei{1}{\,+\,}\Delta,\dots,\pricei{\slot-1}{\,+\,}\Delta,\pricei{\slot}{\,+\,}\Delta,\pricei{\slot+1},\dots,\pricei{n})}$. 
\item $\bullet$
$[\xvar- \cstgain\Delta,\xvar ]\subseteq\Intervali{\slot}{\profile}$ for all $\xvar\in\bigIntervali{\slot}{(\pricei{1},\dots,\pricei{\slot-1},\pricei{\slot}{\,+\,}\Delta,\pricei{\slot+1}{\,+\,}\Delta,\dots,\pricei{n}{\,+\,}\Delta)}$.
\end{easylist}
\end{lemma}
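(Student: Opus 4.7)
I establish case~(i); case~(ii) follows by a symmetric argument (from $\altprofile$ to $\profile$ it is now the prices of slots $\slot,\slot+1,\dots,n$ that decrease by~$\Delta$, making slot~$\slot$ relatively more attractive than the lower-indexed slots, so $\Intervali{\slot}{\profile}$ extends to the \emph{left} rather than the right). The first step is to bring everything into a compact region. Fix $s\in\Intervali{\slot}{\altprofile}$ (otherwise the inclusion is vacuous). Since $s\in\sublevelseti{\slot}{\pricei{\slot}+\Delta}$, one has $\dist{s-\yvari{\slot}}\leq -\pricei{\slot}-\Delta$; combined with $\dist{s-\yvari{\slot}}\geq\dist{0}=-\pricemax$, this forces $\Delta\leq \pricemax-\pricemindelta{\delta}$ and, by two applications of \cref{lem:strong-lb} (once to $\distfunction$ when $s-\yvari{\slot}\geq 0$, once to $\xvar\mapsto\dist{-\xvar}$ when $s-\yvari{\slot}<0$), $\tfrac{\stronglyconvex}{2}(s-\yvari{\slot})^2\leq \dist{s-\yvari{\slot}}-\dist{0}\leq\pricemax-\pricemindelta{\delta}$, so $|s-\yvari{\slot}|\leq R_0:=\sqrt{2(\pricemax-\pricemindelta{\delta})/\stronglyconvex}$. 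Both bounds depend only on fixed problem data.

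I next verify $s\in\Intervali{\slot}{\profile}$: for $i<\slot$, $\dist{s-\yvari{\slot}}+\pricei{\slot}\leq\dist{s-\yvari{i}}+\pricei{i}$ is equivalent to $\dist{s-\yvari{\slot}}+\altpricei{\slot}\leq\dist{s-\yvari{i}}+\altpricei{i}$ (both sides shift by $\Delta$); for $k>\slot$, $\dist{s-\yvari{\slot}}+\pricei{\slot}=\dist{s-\yvari{\slot}}+\altpricei{\slot}-\Delta\leq\dist{s-\yvari{k}}+\altpricei{k}-\Delta=\dist{s-\yvari{k}}+\pricei{k}-\Delta$; finally $\dist{s-\yvari{\slot}}+\pricei{\slot}\leq\dist{s-\yvari{\slot}}+\altpricei{\slot}\leq 0$. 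Now let $\tau:=\max_{l,l'\in[n]}|\yvari{l}-\yvari{l'}|$ and let $L_d$ be a Lipschitz constant for $\distfunction$ on the compact interval $K:=[-R_0-\tau-1,\,R_0+\tau+1]$, which exists because convex functions on $\Real$ are locally Lipschitz. Setting $\cstgain:=\min\bigl(1/(2L_d),\,1/(\pricemax-\pricemindelta{\delta})\bigr)$ ensures $\cstgain\Delta\leq 1$, so $r-\yvari{l}\in K$ for every $r\in[s,s+\cstgain\Delta]$ and every $l\in[n]$.

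For such an $r$, the Lipschitz estimate $|\dist{r-\yvari{l}}-\dist{s-\yvari{l}}|\leq L_d\cstgain\Delta\leq\Delta/2$ applied to each $l$ yields: (a)~$\dist{r-\yvari{\slot}}\leq-\pricei{\slot}-\Delta+\Delta/2\leq -\pricei{\slot}$, so $r\in\sublevelseti{\slot}{\pricei{\slot}}$; (b)~for $i<\slot$, \cref{fact:increasing} applied to the pair $(i,\slot)$ says that $r\mapsto\dist{r-\yvari{\slot}}-\dist{r-\yvari{i}}$ is non-increasing, hence $\dist{r-\yvari{\slot}}-\dist{r-\yvari{i}}\leq\dist{s-\yvari{\slot}}-\dist{s-\yvari{i}}\leq\pricei{i}-\pricei{\slot}$ (using $s\in\fullIntervali{\slot}{\profile}$ from the previous paragraph); (c)~for $k>\slot$, combining the Lipschitz bound on both $\dist{\cdot-\yvari{\slot}}$ and $\dist{\cdot-\yvari{k}}$ with $\dist{s-\yvari{\slot}}-\dist{s-\yvari{k}}\leq\altpricei{k}-\altpricei{\slot}=\pricei{k}-\pricei{\slot}-\Delta$ (from $s\in\fullIntervali{\slot}{\altprofile}$) gives $\dist{r-\yvari{\slot}}-\dist{r-\yvari{k}}\leq\pricei{k}-\pricei{\slot}-\Delta+2L_d\cstgain\Delta\leq\pricei{k}-\pricei{\slot}$. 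Thus $r\in\Intervali{\slot}{\profile}$, proving $[s,s+\cstgain\Delta]\subseteq\Intervali{\slot}{\profile}$.

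The hardest part is producing a single $\cstgain>0$ valid for \emph{all} $(\profile,\slot,\Delta)$; this is resolved by the two a~priori bounds $|s-\yvari{\slot}|\leq R_0$ (via strong convexity) and $\Delta\leq\pricemax-\pricemindelta{\delta}$ (via $\pricei{\slot}+\Delta\leq\pricemax$), which together place every evaluation of $\distfunction$ appearing in the argument inside the fixed compact set~$K$, on which $\distfunction$ has a uniform Lipschitz constant $L_d$.
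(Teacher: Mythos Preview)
Your proof is correct and follows essentially the same strategy as the paper's: confine all evaluations of $d$ to a fixed compact set and then exploit its Lipschitz constant there, using \cref{fact:increasing} for the indices $i<\slot$ and the extra $\Delta$ slack for $k>\slot$ and for the sublevel-set condition. The bookkeeping differs slightly: the paper works with the sublevel set $S=\{s:d(s)+p^{\min}\le 0\}$ and a bootstrapping argument showing that $[s-\Delta/L_d,s+\Delta/L_d]+\,t_j$ stays in $S$ (no a priori bound on~$\Delta$), whereas you first deduce $\Delta\le p^{\max}-p^{\min}$ and $|s-t_j|\le R_0$ via strong convexity, and then build an explicit compact box~$K$. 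Both routes yield the same conclusion.

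One small point to patch: your formula $\gamma=\min\bigl(1/(2L_d),\,1/(p^{\max}-p^{\min})\bigr)$ is undefined when $p^{\max}=p^{\min}$. The paper disposes of this degenerate case up front (then $S=\{0\}$, and either $\Delta=0$ or $I_j(p')=\varnothing$); you should add the same one-line observation, or simply replace the second term by $1/(1+p^{\max}-p^{\min})$ so that $\gamma$ is always well defined.
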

\begin{proof}
\renewcommand{\Idist}{S}%
\providecommand{\Idistij}[2]{\Idist_{#1,#2}}%
\renewcommand{\altprofile}{\altpricefunction}%
\renewcommand{\altpricefunction}{p'}%
Let~$\profile\in\Real^n$, $\slot\in[n]$, and $\Delta\geq 0$. We only derive the result for~$\Intervali{\slot}{\altprofile}$ with $\altprofile = (\pricei{1}+\Delta,\dots,\pricei{\slot-1}+\Delta,\pricei{\slot}+\Delta,\pricei{\slot+1},\dots,\pricei{n})$. 
A proof for~$\Intervali{\slot}{\altprofile}$ with $\altprofile = (\pricei{1},\dots,\pricei{\slot-1},\pricei{\slot}+\Delta,\pricei{\slot+1}+\Delta,\dots,\pricei{n}+\Delta)$ is obtained similarly.

Consider $\Idist =  \{ \xvar \in\Real \colon \dist{\xvar} + \pricemin \leq 0 \}$.  
If $\pricemin=\pricemax$, then $\Idist=\{0\}$ and we have either~$\Delta=0$, or $\altpricei{\slot}>\pricemax$ and $\Intervali{\slot}{\altprofile}=\varnothing$.
Since the result is immediate in both cases, we can assume that~$\Idist$ is a nondegenerate interval.
Let~$\IdistLipschitz$ be the Lipschitz constant of~$\distfunction$ over~$\Idist$, 
and let~$\Smooth$ be the maximum Lipschitz constant among the functions~$\xvar\mapsto\dist{\xvar-\yvari{i}}-\dist{\xvar-\yvari{j}}$ over the intervals~$
 \Idistij{i}{j}=\{ \xvar+\yvar \colon \xvar\in\Idist,\, \yvar\in [\yvari{i},\yvari{j}] \}
$ for $i,j\in[n]$ with~$i<j$. 
We show the result for $\cstgain=
1/\max(\IdistLipschitz,\Smooth)$. 

Let~$\xvar\in\Intervali{\slot}{\altprofile}$. It follows from~\eqref{intersections:a} that 
$\xvar
\in\fullIntervali{\slot}{\altprofile}$ and $\xvar\in\sublevelseti{\slot}{\altpricei{\slot}}$.
Since $\xvar\in\sublevelseti{\slot}{\altpricei{\slot}}$, we have 
$
\dist{\xvar-\yvari{\slot}} +\pricemin
\leq
\dist{\xvar-\yvari{\slot}} +\pricei{\slot}
=
\dist{\xvar-\yvari{\slot}} +\altpricei{\slot}-\Delta
\leq
-\Delta
$, and~$\Delta\geq 0$ yields $\xvar-\yvari{\slot}\in\Idist$. 
Since the result is immediate when $\Intervali{\slot}{\altprofile}=\varnothing$, we assume both $\fullIntervali{\slot}{\altprofile}\neq\varnothing$ and $\sublevelseti{\slot}{\altpricei{\slot}}\neq\varnothing$, and we show 
that, for all $\xvar\in \Intervali{\slot}{\altprofile}$ and $\altxvar\in  [\xvar,\xvar+ \cstgain\Delta]$, we have
$\altxvar\in\Intervali{\slot}{\profile}$.

We first establish the preliminary property that every~$\altxvar\in[\xvar - \Delta/\IdistLipschitz,\xvar + \Delta/\IdistLipschitz]$ satisfies 
$\altxvar-\yvari{\slot}\in\Idist$ and $\altxvar\in\Idistij{\slot}{\altaltslot}$ for all~$k\in[n]$ with $\slot<\altaltslot$.
%
Consider any~$\altxvar\in[\xvar - \Delta/\IdistLipschitz,\xvar + \Delta/\IdistLipschitz]$ and suppose $
\dist{\altxvar-\yvari{\slot}} +\pricemin
> 0$. By continuity of~$\distfunction$, we can find~$\altaltxvar\in(\xvar - \Delta/\IdistLipschitz,\xvar + \Delta/\IdistLipschitz)$ such that $
\dist{\altaltxvar-\yvari{\slot}} +\pricemin
= 0$, which implies $\altaltxvar-\yvari{\slot}\in\Idist$.
It follows from $\xvar-\yvari{\slot}\in\Idist$, the convexity of~$\Idist$, and the definition of~$\IdistLipschitz$ that $
\dist{\altaltxvar-\yvari{\slot}} +\pricemin
\leq 
\dist{\xvar-\yvari{\slot}} +\pricemin
+ \IdistLipschitz |\altaltxvar-\xvar|
\leq 
- \Delta + \IdistLipschitz |\altaltxvar-\xvar|
<0
$, which is a contradiction. 
Hence, 
$
\dist{\altxvar-\yvari{\slot}} +\pricemin
\leq 0
$, which
implies the desired property.

For $\altxvar\in[\xvar,\xvar + \cstgain\Delta]$, we have~$\altxvar\in[\xvar - \Delta/\IdistLipschitz,\xvar + \Delta/\IdistLipschitz]$, and the preliminary property yields
$\altxvar-\yvari{\slot}\in\Idist$.
It follows from $\xvar-\yvari{\slot}\in\Idist$, the convexity of~$\Idist$, and the definition of~$\IdistLipschitz$ that $
\dist{\altxvar-\yvari{\slot}} + \pricei{\slot}
\leq 
\dist{\xvar-\yvari{\slot}} + \IdistLipschitz |\altxvar-\xvar| + \pricei{\slot}  
\leq 
\dist{\xvar-\yvari{\slot}} + \IdistLipschitz\cstgain\Delta + \pricei{\slot}  
\leq 
\dist{\xvar-\yvari{\slot}} + \Delta + \pricei{\slot}  
= 
\dist{\xvar-\yvari{\slot}} + \altpricei{\slot} 
\leq 
0$. Thus, for $\altxvar\in[\xvar,\xvar + \cstgain\Delta]$, we have $\altxvar\in\sublevelseti{\slot}{\pricei{\slot}}$.

Now, consider $\altaltslot\in[n]$. First suppose $\altaltslot \leq \slot$.
For $\altxvar\geq\xvar$,  \cref{fact:increasing} 
yields $  \dist{\altxvar-\yvari{\altaltslot}} - \dist{\altxvar-\yvari{\slot}} \geq \dist{\xvar-\yvari{\altaltslot}} - \dist{\xvar-\yvari{\slot}}$. It follows that
\[
\dist{\altxvar-\yvari{\slot}} + \pricei{\slot} 
\leq 
\dist{\altxvar-\yvari{\altaltslot}} + \pricei{\altaltslot} + [\dist{\xvar-\yvari{\slot}} + \altpricei{\slot} ] - [ \dist{\xvar-\yvari{\altaltslot}} +\altpricei{\altaltslot} ] 
\leq 
\dist{\altxvar-\yvari{\altaltslot}} + \pricei{\altaltslot}
\, ,
\]
where we use $
\pricei{\slot}
=
\pricei{\altaltslot}+\altpricei{\slot}-
\altpricei{\altaltslot}
$ and $\xvar\in\fullIntervali{\slot}{\altprofile}$.
Suppose now $\altaltslot > \slot$.
For $\altxvar\in[\xvar,\xvar + \cstgain\Delta]$, we have~$\altxvar\in[\xvar - \Delta/\IdistLipschitz,\xvar + \Delta/\IdistLipschitz]$, and 
the preliminary property yields
$\altxvar\in\Idistij{\slot}{\altaltslot}$.
It follows from $\xvar\in\Idistij{\slot}{\altaltslot}$, the convexity of~$\Idistij{\slot}{\altaltslot}$, and the definition of~$\Smooth$ that $  \dist{\altxvar-\yvari{\slot}} - \dist{\altxvar-\yvari{\altaltslot}} 
\leq 
\dist{\xvar-\yvari{\slot}} - \dist{\xvar-\yvari{\altaltslot}} + \Smooth (\altxvar-\xvar) 
\leq 
\dist{\xvar-\yvari{\slot}} - \dist{\xvar-\yvari{\altaltslot}} + \Smooth\cstgain \Delta
\leq 
\dist{\xvar-\yvari{\slot}} - \dist{\xvar-\yvari{\altaltslot}} + \Delta$. Consequently,
\[
\dist{\altxvar-\yvari{\slot}} + \pricei{\slot} 
\leq 
\dist{\altxvar-\yvari{\altaltslot}} + \pricei{\altaltslot} + [ \dist{\xvar-\yvari{\slot}}+ \altpricei{\slot} ] - [\dist{\xvar-\yvari{\altaltslot}} + \altpricei{\altaltslot}]
\leq
\dist{\altxvar-\yvari{\altaltslot}} + \pricei{\altaltslot}
\, ,
\]
where we use $
\pricei{\slot}=\pricei{\altaltslot} + \altpricei{\slot}-\altpricei{\altaltslot}-\Delta
$ and $\xvar\in\fullIntervali{\slot}{\altprofile}$.
Thus, for $\altxvar\in[\xvar,\xvar + \Delta/\Smooth]$ we have $
\dist{\altxvar-\yvari{\slot}} + \pricei{\slot} 
\leq
\dist{\altxvar-\yvari{\altaltslot}} + \pricei{\altaltslot}
$ for all $\altaltslot\in[n]$, and $\altxvar\in\fullIntervali{\slot}{\profile}$.

Finally, combining our last two conclusions and using~\eqref{intersections:a}, we find that $\altxvar
\in\fullIntervali{\slot}{\profile}\cap\sublevelseti{\slot}{\pricei{\slot}}=\Intervali{\slot}{\profile}$ holds
for all $\xvar\in
\Intervali{\slot}{\altprofile}$ and all $\altxvar\in
[\xvar,\xvar+ \cstgain\Delta]$.
 \qed\end{proof}

In order to show \cref{prop:upper_bound}, we proceed as follows. We consider a discrete optimal solution~$\profile$ of~\eqref{upper-level-alter-delta} in accordance with~\cref{lem:upper}, and then we alter the individual prices one time slot at a time until a new discrete price profile is obtained that satisfies the constraints of~\eqref{upper-level}, thus providing us with a lower bound to~$\OPT{\delta\Integer}$. To do so, we use two main tools: \cref{lemma:sponge-slot} identifies for~$\profile$ a particular time slot~$\spongeslot$ where a substantial decrease in service load~$\Intervalifunction{\spongeslot}$ can be obtained through simultaneous increase of all prices, and \cref{lemma:tool} provides bounds 
for the variations in service load at the time slots resulting from the successive price alterations.
Starting from~$\profile$, our strategy is then to progressively reduce service load though successive applications of \cref{lemma:tool} from the extreme time slots towards the specified slot~$\spongeslot$, where initially the service load had been sufficiently reduced so as to compensate for the cumulated side effects of the future price jumps at the other time slots.

We set $
\deltamax = 
8/(\lipschitz^2\stronglyconvex)
$, as this constant plays a role in the upcoming results.

\newcommand{\deltaboundfunction}{\bar{\delta}}
\newcommand{\deltabound}[1]{\deltaboundfunction(#1)}%
\newcommand{\IdistdeltaLipschitz}{\IdistLipschitz}%
\newcommand{\Idistdelta}{\Idist}%
\begin{lemma}\label{lemma:sponge-slot}
 Let $\csth >0$. There exist 
 $
\cstDelta{\csth}>0
$
and
 $
0<\deltabound{\csth}\leq \deltamax
$
such that, for every $\delta\in(0,\deltabound{\csth}
 ]$ and  every~$\profile\in\big(
[\pricemindelta{\delta}-\delta,\pricemax+\delta]\cap\delta\Integer\big)^n$ that is an optimal solution of~\eqref{upper-level-alter-delta}, setting $\MCspongefunction= \big\lceil 
\cstDelta{\csth} \delta^{-{3}/{4}}\big\rceil$ yields one of the following: 
$
\sum_{\slot=1}^{n}
\xmeasure{\Intervali{\slot}{\profile+\MCspongefunction\delta}}
=
0
$, 
or one can find a time slot~$\spongeslot\in[n]$ such that
$
\xmeasure{\Intervali{\spongeslot}{\profile+\MCspongefunction\delta}}
\leq
\xMeasure{\Intervali{\spongeslot}{\profile}}  
-  \csth \sqrt{\delta}
$.
\end{lemma}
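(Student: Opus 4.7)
The plan is to pick $\MCspongefunction=\lceil \cstDelta{\csth}\delta^{-3/4}\rceil$ for a constant $\cstDelta{\csth}$ to be calibrated, so that $\MCspongefunction\delta\geq \cstDelta{\csth}\delta^{1/4}$, set $\altprofile=\profile+\MCspongefunction\delta\onevec$, and introduce the lower envelope $\psi(\xvar):=\min_{j\in[n]}\big(\dist{\xvar-\yvari{j}}+\pricei{j}\big)$. The key structural observation is that uniform price shifts leave the preference intervals $\fullIntervalifunction{j}{\cdot}$ unchanged (only relative prices matter), so $\bigcup_j\Intervali{j}{\profile}=\{\psi\leq 0\}$ and $\bigcup_j\Intervali{j}{\altprofile}=\{\psi\leq -\MCspongefunction\delta\}$. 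By the disjoint-interiors property in~\cref{prop:intervals}, this yields
\[
\sum_j\big(\xmeasure{\Intervali{j}{\profile}}-\xmeasure{\Intervali{j}{\altprofile}}\big)
=\xmeasure{\{-\MCspongefunction\delta<\psi\leq 0\}\cap\supp(\xmeasurefunction)},
\]
so by pigeonhole it suffices to bound the right-hand side from below by $n\csth\sqrt\delta$ whenever $\sum_j\xmeasure{\Intervali{j}{\altprofile}}>0$.

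The analysis then splits into two complementary cases. In the first case, there is $\xvar^*\in\supp(\xmeasurefunction)$ with $\psi(\xvar^*)>0$, meaning that some user is already rejected at $\profile$. Combined with the positivity of the total load at $\altprofile$---which, using the density lower bound~$\xmeasuremin$, yields $\xvar^{**}\in\supp(\xmeasurefunction)$ with $\psi(\xvar^{**})\leq -\MCspongefunction\delta$---the intermediate-value theorem and the Lipschitz continuity of $\psi$ on $\supp(\xmeasurefunction)$ (with a Lipschitz constant $\lipschitzprime$ depending only on $\distfunction$, the $\yvari{j}$'s, and the endpoints of $\supp(\xmeasurefunction)$, and independent of $\profile$) produce a subinterval of $[\xvar^*,\xvar^{**}]$ of length at least $\MCspongefunction\delta/\lipschitzprime$ on which $-\MCspongefunction\delta<\psi\leq 0$. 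Multiplying by $\xmeasuremin$ delivers the desired bound once $\cstDelta{\csth}\geq n\csth\lipschitzprime/\xmeasuremin$ and $\delta\leq 1$.

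In the second case $\psi\leq 0$ throughout $\supp(\xmeasurefunction)$, i.e., every user is served at $\profile$, so $\sum_j\xmeasure{\Intervali{j}{\profile}}$ attains the total mass $M:=\xmeasure{\supp(\xmeasurefunction)}$. If $T:=\sum_j\xmeasure{\Intervali{j}{\altprofile}}\leq M/2$, then the total shrinkage is at least $M/2$, and pigeonhole already delivers a slot-wise drop of at least $M/(2n)\geq\csth\sqrt\delta$ for $\delta$ below a constant depending on $M,n,\csth$. Otherwise $T>M/2$, in which case I would invoke the optimality of $\profile$: since $\altprofile$ is feasible (loads are non-increasing under uniform upward shifts), the revenue difference equals $\MCspongefunction\delta\cdot T-\sum_j(\pricei{j}+\delta)\big(\xmeasure{\Intervali{j}{\profile}}-\xmeasure{\Intervali{j}{\altprofile}}\big)$ and must be non-positive, which combined with $\pricei{j}+\delta\leq\pricemax+2\delta$ forces the total shrinkage to be at least $\MCspongefunction\delta T/(\pricemax+2\delta)=\Omega(\delta^{1/4})$, once again beating $n\csth\sqrt\delta$.

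The main obstacle is packaging the several constants---$\lipschitzprime$, $M$, $\pricemax$, and $\xmeasuremin$---and the various size conditions on $\delta$ (including $\delta\leq\deltamax$, $\delta\leq 1$, and $\delta\leq M^2/(4n^2\csth^2)$) into a single admissible choice of $\cstDelta{\csth}$ and $\deltabound{\csth}$; the $\profile$-independence of $\lipschitzprime$ follows from the fact that each $\xvar\mapsto\dist{\xvar-\yvari{j}}+\pricei{j}$ has a Lipschitz constant on the compact $\supp(\xmeasurefunction)$ that depends only on $\distfunction$ and the $\yvari{j}$, and $\psi$ is their pointwise minimum.
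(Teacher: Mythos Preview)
Your proof is correct and follows a genuinely different case split from the paper's. The paper decomposes into three cases: (A) $\MCspongefunction\delta$ exceeds the price range (so every shifted price exceeds $\pricemax$ and the first alternative holds); (B) the \emph{total} load at $\profile$ is already below a threshold $\tau=O(\delta^{1/4})$ (then a Lipschitz argument on $\distfunction$ shows the shift kills all service, again the first alternative); (C) the remaining case, where optimality of $\profile$ against $\profile+\MCspongefunction\delta$ forces $\sum_j D_j\geq \tau\MCspongefunction\delta/(n\pi)$ for a positive constant $\pi$, and pigeonhole finishes. You instead split on whether the envelope $\psi$ is nonpositive throughout $\supp(\xmeasurefunction)$: when it is not, you exploit the existence of both a rejected user and a still-served user to extract, via the intermediate-value theorem and the Lipschitz bound on $\psi$, a subinterval of length $\geq\MCspongefunction\delta/\lipschitzprime$ sitting inside the shrinkage band; when it is, you fall back on the same optimality comparison the paper uses.

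What each buys: your decomposition is arguably cleaner in that it avoids an explicit ``$\MCspongefunction\delta$ large'' case and treats the Lipschitz step as a direct lower bound on shrinkage rather than as a route to the first alternative. The paper's decomposition, on the other hand, yields slightly more explicit constants and makes it transparent that $\cstDelta{\csth}$ and $\deltabound{\csth}$ depend only on $\distfunction$, the $\yvari{j}$'s, $\supp(\xmeasurefunction)$, and the bounds $\xmeasuremin,\xmeasuremax$---a point used downstream. Your constants involve $M=\xmeasure{\Real}$ and $\pricemax$; to match the paper's independence claim you should replace $M$ by its lower bound $\xmeasuremin\,|\supp(\xmeasurefunction)|$, and note that when $\pricemax+2\delta\leq 0$ the optimality inequality forces $T=0$ directly (first alternative), so the division is harmless. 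These are cosmetic fixes; the argument is sound.
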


In~\cref{lemma:sponge-slot} and its proof, we use the notation
$\profile+\MCspongefunction\delta=(\pricei{1}+\MCspongefunction\delta,\dots,\pricei{n}+\MCspongefunction\delta)$.
We note that the existence of an optimal solution~$\profile\in\big(
[\pricemindelta{\delta}-\delta,\pricemax+\delta]
\cap\delta\Integer\big)^n$  of the modified problem~\eqref{upper-level-alter-delta} follows from \cref{prop:bounded-delta}.
The message of~\cref{lemma:sponge-slot} is that the solution~$\profile$ of~\eqref{upper-level-alter-delta} is either~$O(\delta^{1/4})
$ close to a feasible profile~$\altprofile\in(\delta\Integer)^n$ of~\eqref{upper-level} under which no user decides to be served, or that there is one particular time slot~$\spongeslot$ where any overhead~$\csth \sqrt{\delta}$ in service load can be cut by simultaneously increasing all prices by a quantity~$\MCspongefunction\delta
=O(\delta^{1/4})
$.

\begin{proof}[
\cref{lemma:sponge-slot}]
\label{proof:lemma:sponge-slot}%
\renewcommand{\Idist}{S'}
\renewcommand{\IdistdeltaLipschitz}{\IdistLipschitz'}%
\newcommand{\positivebound}{\pi}%
\newcommand{\Diff}{D}%
\newcommand{\Diffi}[1]{\Diff_{#1}}%
\newcommand{\altpricepdeltaimax}{\pi}%
We show the lemma for the maps
$
\cstDelta{\csth}
= 
[ 
n\positivebound(\IdistdeltaLipschitz/\xmeasuremin)\csth  
]^{ 1 / 2}
$
and
$
\deltabound{\csth} 
= 
\min \big(\deltamax, [\IdistdeltaLipschitz\xmeasure{\Real}/(\xmeasuremin\cstDelta{\csth})]^{4} \big)
$,
where 
$ 
\positivebound=  \max(0,\pricemax+ 2\deltamax)+\pricemax-\pricemindelta{\deltamax} + 
\deltamax>0
$ 
and~$\IdistdeltaLipschitz$ denotes the Lipschitz constant of~$\distfunction$ over the interval~$\Idistdelta=\{ \xvar \in\Real \colon \dist{\xvar} + 
\pricemindelta{\deltamax} - \deltamax
\leq 0 \}
$.
Consider~$\delta$, $\profile$, and~$\MCspongefunction$ as introduced in the statement of the lemma, and set
$
 \loadmax 
 = 
(\xmeasuremin/\IdistdeltaLipschitz)\cstDelta{\csth}\sqrt[4]{\delta}
$.

First consider the case 
$ \MCspongefunction\delta> \pricemax-\pricemindelta{\deltamax} + 
\delta$. 
By definition of~$\profile$  and~$\delta$, we have $\pricei{\slot}+\MCspongefunction\delta> \pricemax
$ for all~$\slot\in[n]$, and therefore no user is served in~\eqref{upper-level} under~$\profile+\MCspongefunction\delta$. Hence,
$
\sum_{\slot=1}^{n}
\xmeasure{\Intervali{\slot}{\profile+\MCspongefunction\delta}}
=
0
$, which is the first alternative.

Now, consider the case $
\sum_{\slot=1}^{n}
\xmeasure{\Intervali{j}{\profile}}
< 
\loadmax 
$. 
We prove that the first alternative $
\sum_{\slot=1}^{n}\xmeasure{\Intervali{\slot}{\profile+\MCspongefunction\delta}}
=
0
$ holds, once again by showing that no user is served under~$\profile+\MCspongefunction\delta$.
To do this, we take $\xvar\in\supp(\xmeasurefunction)$ and ${\altaltslot\in[n]}$, for which we show that $\dist{\xvar-\yvari{\altaltslot}}+\pricei{\altaltslot} +\MCspongefunction\delta  > 0$. %
Since $\dist{\xvar-\yvari{\altaltslot}}+\pricei{\altaltslot} +\MCspongefunction\delta  > 0$ is immediate when $\dist{\xvar-\yvari{\altaltslot}}+\pricei{\altaltslot} > 0$, we can suppose that 
$\dist{\xvar-\yvari{\altaltslot}}+\pricei{\altaltslot} \leq 0$, which implies $\xvar-\yvari{\altaltslot}\in\Idistdelta$. 
Since $
\delta
\leq
\deltabound{\csth} 
\leq 
[\IdistdeltaLipschitz\xmeasure{\Real}/(\xmeasuremin\cstDelta{\csth})]^{4} 
$, we have $ 
\loadmax 
\leq
\xmeasure{\Real}
$,
and 
therefore $
\sum_{\slot=1}^{n}\xmeasure{\Intervali{\slot}{\profile}} 
< \xmeasure{\Real}
$%
. 
Hence, one can find~$\altxvar\in\supp(\xmeasurefunction)$ such that $\dist{\altxvar-\yvari{\slot}}+\pricei{\slot}>0$ for all~$\slot\in[n]$.
In particular, $ \dist{\altxvar-\yvari{\altaltslot}}+\pricei{\altaltslot}  
>0$, and it follows from the continuity of~$\distfunction$ and the convexity of~$\supp(\xmeasurefunction)$ that there exists~$\altaltxvar\in\supp(\xmeasurefunction)$ such that $\dist{\altaltxvar-\yvari{\altaltslot}}+\pricei{\altaltslot}=0$, $\altaltxvar-\yvari{\altaltslot}\in\Idistdelta$, and $\dist{\hat\xvar-\yvari{\altaltslot}}+\pricei{\altaltslot} \leq 0$
for all~$\hat\xvar$ located between~$\xvar$ and~$\altaltxvar$. This last property yields  $\hat\xvar\in\bigcup_{\slot=1}^n \Intervali{\slot}{\profile}$ for all such~$\hat\xvar$, and consequently $|\xvar-\altaltxvar| \leq   \sum_{\slot=1}^n |\Intervali{\slot}{\profile}|
$.  
Since $
\MCspongefunction\delta 
\geq 
{\IdistdeltaLipschitz \loadmax}/{\xmeasuremin} 
$, 
we have
$
\MCspongefunction\delta >
(\IdistdeltaLipschitz/{\xmeasuremin})
\sum_{\slot=1}^{n}\xmeasure{\Intervali{\slot}{\profile}} 
\geq
\IdistdeltaLipschitz \sum_{\slot=1}^n |\Intervali{\slot}{\profile}|
\geq
\IdistdeltaLipschitz |\xvar-\altaltxvar|
$%
. 
Consequently,  $\dist{\xvar-\yvari{\altaltslot}}+\pricei{\altaltslot}+\MCspongefunction\delta 
\geq 
\dist{\altaltxvar-\yvari{\altaltslot}}+\pricei{\altaltslot} - \IdistdeltaLipschitz |\xvar-\altaltxvar| +\MCspongefunction\delta
>
\dist{\altaltxvar-\yvari{\altaltslot}}+\pricei{\altaltslot}
=
0
$, where the first inequality results from the definition of~$\IdistdeltaLipschitz$.

Lastly, consider the case 
$ \MCspongefunction\delta \leq \pricemax-\pricemindelta{\deltamax} +
\delta$ and  
$%
\sum_{\slot=1}^{n}
\xmeasure{\Intervali{j}{\profile}}
\geq \loadmax
$. 
Let the price profile $\altprofile$ be defined for $\slot\in[n]$ by
$
\altpricei{\slot} = \pricei{\slot}+\MCspongefunction\delta
$, and 
let $
\Diffi{\slot}
=
\xmeasure{\Intervali{\slot}{\profile}} - 
\xmeasure{\Intervali{\slot}{\altprofile}}
$. 
By definition of~$\profile$ and~$\altprofile$, we have $
\fullIntervali{\slot}{\profile}
=
\fullIntervali{\slot}{\altprofile}
$
and 
$
\sublevelseti{\slot}{\pricei{\slot}}
\supseteq
\sublevelseti{\slot}{\altpricei{\slot}}
$.
It follows from~\eqref{intersections:a} 
that 
$
\Intervali{\slot}{\profile}
\supseteq
\Intervali{\slot}{\altprofile}
$, and therefore $\Diffi{\slot}\geq 0$. 
Optimality of~$\profile$ in~\eqref{upper-level-alter-delta} and~$\profile,\altprofile\in(\delta\Integer)^n$ then imply
\[
 0
 \geq
 \sum_{\slot=1}^n 
 (
 \altpricei{\slot} 
 + \delta )
 \xmeasure{\Intervali{\slot}{\altprofile}} 
 -
 \sum_{\slot=1}^n 
 (
 \pricei{\slot}
 +\delta)
\xmeasure{\Intervali{\slot}{\profile}}
 =
\MCspongefunction\delta
 \sum_{\slot=1}^n 
\xmeasure{\Intervali{\slot}{\profile}}
 -
 \sum_{\slot=1}^n
 (
 \altpricei{\slot} 
 +\delta)
 \Diffi{\slot}
\, .
\]
Using $\delta\leq\deltamax$, the definition of~$\profile$ and~$\altprofile$, and the assumption on~$\MCspongefunction\delta$, we find
$
\altpricei{\slot}
+\delta
= 
\pricei{\slot}+\delta+\MCspongefunction\delta
\leq
(\pricemax+\delta)+\delta+ (\pricemax-\pricemindelta{\deltamax} +
\delta)
\leq
\positivebound$ for all~$\slot\in[n]$.
It follows under our assumption that
\[
 \loadmax
 \leq
 \sum_{\slot=1}^n 
\xmeasure{\Intervali{\slot}{\profile}}
 \leq
 \frac{1}{\MCspongefunction\delta} \sum_{\slot=1}^n 
 (
 \altpricei{\slot}
 +\delta)
 \Diffi{\slot}
 \leq
 \frac{
 \positivebound
 }{\MCspongefunction\delta}  \sum_{\slot=1}^n  \Diffi{\slot}
 \leq
 \frac{n   \positivebound
}{\MCspongefunction\delta}  \max_{\slot\in[n]}  \Diffi{\slot}
\, .
\]
Consequently, we can find~$\spongeslot\in[n]$ such that
$
 \Diffi{\spongeslot}
 \geq
 \loadmax
 \MCspongefunction\delta/(n \positivebound
)  
$.
It follows from $\profile,\altprofile\in(\delta\Integer)^n$
that
$
\xmeasure{\Intervali{\spongeslot}{\altprofile}}
=
\xmeasure{\Intervali{\spongeslot}{\profile}} 
-
\Diffi{\spongeslot}
\leq
\xmeasure{\Intervali{\spongeslot}{\profile}} 
-
\loadmax
\MCspongefunction\delta/(n \positivebound)
$.
Introducing the expressions for~$ \loadmax$ and~$\cstDelta{\csth}$ into the last inequality and using 
$\MCspongefunction \geq
\cstDelta{\csth} \delta^{-{3}/{4}}$,
we find
$
\xmeasure{\Intervali{\spongeslot}{\altprofile}}
\leq
\xMeasure{\Intervali{\spongeslot}{\profile}}  -  \csth \sqrt{\delta}
$,
and we get the second alternative.
\qed\end{proof}

Given a price profile, suppose we simultaneously raise the prices at the first~$j$ (or last~$j$) time slots 
in order to reduce attendance at those time slots.
\cref{lemma:tool} provides bounds for the decrease in their individual service loads, and bounds for the variations in service load at the remaining time slots.

\begin{lemma}\label{lemma:tool}
\renewcommand{\altprofile}{\altpricefunction}%
\renewcommand{\altpricefunction}{p'}%
There exists $\MCtool>0$
such that for every $\delta\in(0,\deltamax]$, $\profile\in
\big([\pricemindelta{\delta},+\infty)
\cap\delta\Integer
\big)^n$,
and $\spongeslot\in[n]$ with~$\xmeasure{\fullIntervali{\spongeslot}{\profile}}>0
$, the following holds:

\begin{easylist}
\ListProperties(Numbers=r,Mark=.,FinalMark={)},FinalMark3={)})
\begin{enumerate}[label=(\roman*)\!]
 \item 
\label{lemma:tool:left}
 For $\slot{\,<\,}\spongeslot$, $\Delta{\,>\,}0$, and $\altprofile {\,=\,} (\pricei{1}+\MCtool\Delta,\dots,\pricei{\slot-1}+\MCtool\Delta,\pricei{\slot}+\MCtool\Delta,\pricei{\slot+1},\dots,\pricei{n})$, we have $
 \xmeasure{\fullIntervali{\spongeslot}{\altprofile}}>0
$, and
 \[
 \nocolsep
  \begin{array}{rcll}
   \xmeasure{\Intervali{\altaltslot}{\altprofile}} 
   &\leq&
   \xmeasure{\Intervali{\altaltslot}{\profile}}
   &
   \text{ if }
   1 \leq \altaltslot < \slot \, ,
   \\
   \xmeasure{\Intervali{\slot}{\altprofile}} 
   &\leq&
   \max\big(0,\xmeasure{\Intervali{\slot}{\profile}} - \Delta\big)
    \, , &
   \\
    \xmeasure{\Intervali{\altaltslot}{\altprofile}} 
   &\leq&
    \xmeasure{\Intervali{\altaltslot}{\profile}} +
    \xmeasuremax\MCtool\lipschitz
    \Delta
   &    \text{ if }
    \slot< \altaltslot \leq \spongeslot
    \, ,
   \\
   \xmeasure{\Intervali{\altaltslot}{\altprofile}} 
   &=&
   \xmeasure{\Intervali{\altaltslot}{\profile}}
   &
   \text{ if }
   \spongeslot < \altaltslot \leq n \, .
  \end{array}
 \]

 \item 
\label{lemma:tool:right}
 For $\slot{\,>\,}\spongeslot$, $\Delta{\,>\,}0$, and $\altprofile {\,=\,} (\pricei{1},\dots,\pricei{\slot-1},\pricei{\slot}+\MCtool\Delta,\pricei{\slot+1}+\MCtool\Delta,\dots,\pricei{n}+\MCtool\Delta)$, we have $
 \xmeasure{\fullIntervali{\spongeslot}{\altprofile}}>0
$, and
 \[
 \nocolsep
  \begin{array}{rcll}
   \xmeasure{\Intervali{\altaltslot}{\altprofile}} 
   &=&
   \xmeasure{\Intervali{\altaltslot}{\profile}}
   &
   \text{ if }
   1 \leq \altaltslot < \spongeslot \, ,
   \\
    \xmeasure{\Intervali{\altaltslot}{\altprofile}}
   &\leq&
   \xmeasure{\Intervali{\altaltslot}{\profile}} + 
   \xmeasuremax\MCtool\lipschitz
   \Delta
   & 
   \text{ if }
   \spongeslot\leq \altaltslot <\slot
    \, ,\\
   \xmeasure{\Intervali{\slot}{\altprofile}}
   &\leq& 
   \max\big(0,\xmeasure{\Intervali{\slot}{\profile}} - \Delta\big)
    \, , &
   \\
   \xmeasure{\Intervali{\altaltslot}{\altprofile}}
   &\leq&
   \xmeasure{\Intervali{\altaltslot}{\profile}}
   &
   \text{ if }
   \slot < \altaltslot \leq n \, .
  \end{array}
 \]
\end{enumerate}
\end{easylist}
\end{lemma}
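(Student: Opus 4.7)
The proof takes case~(i) in detail, case~(ii) being obtained by reversing the order of the time slots. Fix $\MCtool\geq 1/(\xmeasuremin\cstgain)$ with $\cstgain$ the constant from \cref{lemma:reduction}, and let $\altprofile$ be as in case~(i), so that $\altpricei{\ell}-\pricei{\ell}=\MCtool\Delta$ for $\ell\leq\slot$ and $0$ otherwise. Because the constraints defining $\fullIntervali{\spongeslot}{\altprofile}$ loosen (those involving $\ell\leq\slot$) or stay unchanged compared with those for $\fullIntervali{\spongeslot}{\profile}$, we have $\fullIntervali{\spongeslot}{\profile}\subseteq\fullIntervali{\spongeslot}{\altprofile}$, and hence $\xmeasure{\fullIntervali{\spongeslot}{\altprofile}}>0$. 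For $\altaltslot<\slot$ the same monotonicity runs the other way (only the constraints with $\ell>\slot$ change, and they tighten), so $\fullIntervali{\altaltslot}{\altprofile}\subseteq\fullIntervali{\altaltslot}{\profile}$; combined with $\sublevelseti{\altaltslot}{\altpricei{\altaltslot}}\subseteq\sublevelseti{\altaltslot}{\pricei{\altaltslot}}$, \eqref{intersections:a} gives $\Intervali{\altaltslot}{\altprofile}\subseteq\Intervali{\altaltslot}{\profile}$, yielding the first bound of case~\ref{lemma:tool:left}.

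For $\slot<\altaltslot\leq\spongeslot$, the plan is to apply \cref{lemma:twolemmasinone} with $\eta=\MCtool\Delta$, $\eta'=0$, obtaining $|\fullIntervali{\altaltslot}{\altprofile}\setminus\fullIntervali{\altaltslot}{\profile}|\leq \MCtool\lipschitz\Delta$; since $\altpricei{\altaltslot}=\pricei{\altaltslot}$ implies $\sublevelseti{\altaltslot}{\altpricei{\altaltslot}}=\sublevelseti{\altaltslot}{\pricei{\altaltslot}}$, we have $\Intervali{\altaltslot}{\altprofile}\setminus\Intervali{\altaltslot}{\profile}\subseteq\fullIntervali{\altaltslot}{\altprofile}\setminus\fullIntervali{\altaltslot}{\profile}$, and the density upper bound $\xmeasuremax$ closes the inequality. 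For $\altaltslot>\spongeslot$, I would prove the stronger statement $\fullIntervali{\altaltslot}{\altprofile}=\fullIntervali{\altaltslot}{\profile}$. The inclusion $\supseteq$ is immediate as before; conversely, any $s\in\fullIntervali{\altaltslot}{\altprofile}$ satisfies $s\geq s_\star$, the right endpoint of $\fullIntervali{\spongeslot}{\altprofile}$ (by \cref{lemma:monotonicity}), and this endpoint coincides with the right endpoint of $\fullIntervali{\spongeslot}{\profile}$ because it is determined by comparisons with slots $>\slot$ whose prices do not change. At $s_\star$ the constraints of $\fullIntervali{\spongeslot}{\profile}$ yield $\dist{s_\star-\yvari{\ell}}-\dist{s_\star-\yvari{\spongeslot}}\geq \pricei{\spongeslot}-\pricei{\ell}$ for every $\ell\leq\slot$, and \cref{fact:increasing} extends this to $s\geq s_\star$; combining with the constraint of slot $\spongeslot$ on $\altaltslot$ (unchanged from $\profile$ to $\altprofile$ and satisfied at $s\in\fullIntervali{\altaltslot}{\altprofile}$) yields the constraint of $\ell$ on $\altaltslot$ in $\profile$, placing $s$ in $\fullIntervali{\altaltslot}{\profile}$; since $\sublevelseti{\altaltslot}{\cdot}$ is also unchanged, $\Intervali{\altaltslot}{\altprofile}=\Intervali{\altaltslot}{\profile}$.

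The main obstacle is the case $\altaltslot=\slot$. If $\xmeasure{\Intervali{\slot}{\altprofile}}=0$ the bound is trivial, so assume $\xmeasure{\Intervali{\slot}{\altprofile}}>0$ and write $\Intervali{\slot}{\altprofile}=[a',b']$ by \cref{prop:intervals}. \cref{lemma:reduction} delivers $[x,x+\cstgain \MCtool\Delta]\subseteq\Intervali{\slot}{\profile}$ for every $x\in[a',b']$, so $\Intervali{\slot}{\profile}\supseteq[a',b'+\cstgain \MCtool\Delta]$. The subtle point is that this Lebesgue cushion alone would fail to carry mass $\Delta$ if it sat past $\sup\supp\xmeasurefunction$. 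The crucial observation is that $\xmeasure{\fullIntervali{\spongeslot}{\profile}}>0$ forces the right endpoint of $\fullIntervali{\slot}{\profile}$, hence of $\Intervali{\slot}{\profile}$, to lie strictly below $\sup\supp\xmeasurefunction$: by \cref{prop:intervals} the intervals $\fullIntervali{\slot+1}{\profile},\ldots,\fullIntervali{\spongeslot}{\profile}$ sit between them, and $\fullIntervali{\spongeslot}{\profile}$ meets the interval $\supp\xmeasurefunction$ on a set of positive measure. Combined with $b'\geq\inf\supp\xmeasurefunction$ (which follows from $\xmeasure{\Intervali{\slot}{\altprofile}}>0$), this yields $[b',b'+\cstgain \MCtool\Delta]\subseteq\supp\xmeasurefunction$, and the density lower bound $\xmeasuremin$ then gives $\xmeasure{(b',b'+\cstgain \MCtool\Delta]}\geq \xmeasuremin\cstgain \MCtool\Delta\geq\Delta$ by our choice of $\MCtool$; since this slab is disjoint from $\Intervali{\slot}{\altprofile}$ and contained in $\Intervali{\slot}{\profile}$, we conclude $\xmeasure{\Intervali{\slot}{\altprofile}}\leq\xmeasure{\Intervali{\slot}{\profile}}-\Delta$.
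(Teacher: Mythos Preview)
Your proof is correct and follows essentially the same route as the paper: the same choice of $\MCtool$ via \cref{lemma:reduction}, the same use of \cref{lemma:twolemmasinone} for $\slot<\altaltslot\leq\spongeslot$, and the same key observation that $\xmeasure{\fullIntervali{\spongeslot}{\profile}}>0$ together with interval support of $\xmeasurefunction$ guarantees the slab $[b',b'+\cstgain\MCtool\Delta]$ carries $\xmeasurefunction$-mass at least $\Delta$. The only notable variation is in the case $\altaltslot>\spongeslot$, where the paper picks an arbitrary point of $\fullIntervali{\spongeslot}{\profile}$ as reference, while you use specifically the common right endpoint of $\fullIntervali{\spongeslot}{\profile}$ and $\fullIntervali{\spongeslot}{\altprofile}$; both arguments are valid and equally short.
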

\begin{proof}
\label{proof:lemma:tool}
\renewcommand{\altprofile}{\altpricefunction}%
\renewcommand{\altpricefunction}{p'}%
We show~\ref{lemma:tool:left} for the constants
$\MCtool 
= 
\lceil 1/(\xmeasuremin\cstgain)\rceil
$, where~$\cstgain$ is specified by \cref{lemma:reduction}. 
The proof of~\ref{lemma:tool:right} is omitted as it can be done similarly and with the same constants.
Let~$\slot<\spongeslot$, $\Delta>0$, and $\altprofile = (\pricei{1}+\MCtool\Delta,\dots,\pricei{\slot-1}+\MCtool\Delta,\pricei{\slot}+\MCtool\Delta,\pricei{\slot+1},\dots,\pricei{n})$. 
First consider~$\xvar\in\fullIntervali{\spongeslot}{\profile}$.
For $\ell\in[n]$, we have
$\pricei{\ell} 
\leq\altpricei{\ell}$ and $\pricei{\spongeslot}=\altpricei{\spongeslot}$, which yield 
$
\dist{\xvar-\yvari{\spongeslot}} + \altpricei{\spongeslot} 
=
\dist{\xvar-\yvari{\spongeslot}} + \pricei{\spongeslot}  
\leq
\dist{\xvar-\yvari{\ell}} + \pricei{\ell} 
\leq
\dist{\xvar-\yvari{\ell}} + \altpricei{\ell} 
$, and therefore $\xvar \in \fullIntervali{\spongeslot}{\altprofile}$. Hence, $ \fullIntervali{\spongeslot}{\profile}\subseteq\fullIntervali{\spongeslot}{\altprofile}$. Since $
\xmeasure{\fullIntervali{\spongeslot}{\profile}}>0
$, we also have $
\xmeasure{\fullIntervali{\spongeslot}{\altprofile}}>0
$.
We now verify each remaining claim of~\ref{lemma:tool:left} individually.
Consider~$\altaltslot\in[n]$.

Suppose $1 \leq \altaltslot < \slot$. Then, for all $\xvar\in\Intervali{\altaltslot}{\altprofile}$ and $\ell\in [n]$, it holds that
$ 
\dist{\xvar-\yvari{\altaltslot}}+\pricei{\altaltslot} 
=
\dist{\xvar-\yvari{\altaltslot}}+\altpricei{\altaltslot} 
-
\MCtool\Delta
\leq
\min(0,\dist{\xvar-\yvari{\ell}}+\altpricei{\ell}) 
-
\MCtool\Delta
\leq
\min(0,\dist{\xvar-\yvari{\ell}}+\pricei{\ell}) 
$, so that 
$\xvar\in\Intervali{\altaltslot}{\profile}$. Hence, $\Intervali{\altaltslot}{\altprofile}\subseteq\Intervali{\altaltslot}{\profile}$, and $   \xmeasure{\Intervali{\altaltslot}{\altprofile}}\leq
   \xmeasure{\Intervali{\altaltslot}{\profile}} 
   $.

Suppose $\altaltslot=\slot$. %
In the case $\Intervali{\slot}{\altprofile}=\varnothing$, we have $\xmeasure{\Intervali{\slot}{\altprofile}}=0$.
Now, consider the case
$\Intervali{\slot}{\altprofile}\neq\varnothing$.
\cref{lemma:reduction} yields $[\altxvar,\altxvar+ \cstgain\MCtool\Delta]\subseteq\Intervali{\slot}{\profile}$ for all $\altxvar\in\Intervali{\slot}{\altprofile}$.
Since $\xmeasure{\fullIntervali{\spongeslot}{\profile}}>0$, $\slot<\spongeslot$, and~$\xmeasurefunction$ is interval supported, it follows from \cref{prop:intervals} that $  \xmeasure{\Intervali{\slot}{\profile}}\geq \xmeasure{\Intervali{\slot}{\altprofile}}+\xmeasuremin\cstgain\MCtool\Delta\geq \xmeasure{\Intervali{\slot}{\altprofile}}+\Delta$.
In any case, we have
$   \xmeasure{\Intervali{\slot}{\altprofile}} 
   \leq
    \max\big(0,\xmeasure{\Intervali{\slot}{\profile}} - \Delta\big)$.

Suppose $\slot<\altaltslot\leq\spongeslot$. 
Using \cref{lemma:twolemmasinone} with parameters~$\eta=\MCtool\Delta$ and~$\eta'=0$, we find
$
|\fullIntervali{\altaltslot}{\altprofile}\setminus\fullIntervali{\altaltslot}{\profile}|
\leq
\lipschitz\MCtool\Delta
$.
Since $\altpricei{\altaltslot}=\pricei{\altaltslot}$, we also have
$
\sublevelseti{\altaltslot}{\altpricei{\altaltslot}} 
=
\sublevelseti{\altaltslot}{\pricei{\altaltslot}} 
$.
It then follows from~\eqref{intersections:a} that
$
| \Intervali{\altaltslot}{\altprofile} \setminus \Intervali{\altaltslot}{\profile} | 
=
| (\fullIntervali{\altaltslot}{\altprofile} \setminus \fullIntervali{\altaltslot}{\profile}) \cap
\sublevelseti{\altaltslot}{\pricei{\altaltslot}} 
 | 
\leq
| \fullIntervali{\altaltslot}{\altprofile} \setminus\fullIntervali{\altaltslot}{\profile} | 
\leq 
\MCtool\lipschitz
$, 
and thus
$
\xmeasure{\Intervali{\altaltslot}{\altprofile}} 
   \leq
    \xmeasure{\Intervali{\altaltslot}{\profile}} + \xmeasure{ \Intervali{\altaltslot}{\altprofile} \setminus \Intervali{\altaltslot}{\profile}}
   \leq
    \xmeasure{\Intervali{\altaltslot}{\profile}} + \xmeasuremax| \Intervali{\altaltslot}{\altprofile} \setminus \Intervali{\altaltslot}{\profile}|
   \leq
    \xmeasure{\Intervali{\altaltslot}{\profile}} + 
    \xmeasuremax\MCtool\lipschitz
    \Delta 
    $.

Suppose $\spongeslot< \altaltslot \leq n$. We successively show $\Intervali{\altaltslot}{\profile}\subseteq\Intervali{\altaltslot}{\altprofile}$ and $\Intervali{\altaltslot}{\altprofile}\subseteq\Intervali{\altaltslot}{\profile}$, which imply $\Intervali{\altaltslot}{\profile}=\Intervali{\altaltslot}{\altprofile}$ and $\xmeasure{\Intervali{\altaltslot}{\profile}}=\xmeasure{\Intervali{\altaltslot}{\altprofile}}$. 
First, consider~$\xvar\in\Intervali{\altaltslot}{\profile}$.
For $\ell\in[n]$, we have 
$ 
\dist{\xvar-\yvari{\altaltslot}}+\altpricei{\altaltslot} 
=
\dist{\xvar-\yvari{\altaltslot}}+\pricei{\altaltslot} 
\leq
\min(0,\dist{\xvar-\yvari{\ell}}+\pricei{\ell})
\leq
\min(0,\dist{\xvar-\yvari{\ell}}+\altpricei{\ell}) 
$. Hence, 
$\xvar\in\Intervali{\altaltslot}{\altprofile}$, and therefore $\Intervali{\altaltslot}{\profile}\subseteq\Intervali{\altaltslot}{\altprofile}$.
Now, consider~$\xvar\in\Intervali{\altaltslot}{\altprofile}$, and pick~$\altxvar\in\fullIntervali{\spongeslot}{\profile}$.
    For $\ell\in[n]$, we have $\dist{\altxvar-\yvari{\spongeslot}}+\altpricei{\spongeslot}=\dist{\altxvar-\yvari{\spongeslot}}+\pricei{\spongeslot} \leq 
    \dist{\altxvar-\yvari{\ell}}+\pricei{\ell} \leq 
    \dist{\altxvar-\yvari{\ell}}+\altpricei{\ell} $ and therefore~$\altxvar\in\fullIntervali{\spongeslot}{\altprofile}$. 
        It follows from \cref{lemma:monotonicity} that $\altxvar\leq\xvar$.
    For 
    $1\leq\ell<\spongeslot$,
   using $\altxvar\leq\xvar$ and \cref{fact:increasing}  
   yields $  \dist{\altxvar-\yvari{\ell}}
-\dist{\altxvar-\yvari{\spongeslot}}
\leq \dist{\xvar-\yvari{\ell}} -\dist{\xvar-\yvari{\spongeslot}} $, and it follows that
$
\dist{\altxvar-\yvari{\spongeslot}}+\pricei{\spongeslot} 
\leq
\dist{\altxvar-\yvari{\ell}}+\pricei{\ell} 
\leq
\dist{\xvar-\yvari{\ell}}+\pricei{\ell} 
+ \dist{\altxvar-\yvari{\spongeslot}} - \dist{\xvar-\yvari{\spongeslot}}
$. 
Consequently, 
$ 
\dist{\xvar-\yvari{\altaltslot}}+\pricei{\altaltslot} 
=
\dist{\xvar-\yvari{\altaltslot}}+\altpricei{\altaltslot} 
\leq
\min(0,\dist{\xvar-\yvari{\spongeslot}}+\altpricei{\spongeslot})
=
\min(0,\dist{\altxvar-\yvari{\spongeslot}}+\pricei{\spongeslot} +\dist{\xvar-\yvari{\spongeslot}} - \dist{\altxvar-\yvari{\spongeslot}})
\leq
\min(0,\dist{\xvar-\yvari{\ell}}+\pricei{\ell})
$.
For 
$\spongeslot\leq\ell\leq n$,
we have 
$ 
\dist{\xvar-\yvari{\altaltslot}}+\pricei{\altaltslot} 
=
\dist{\xvar-\yvari{\altaltslot}}+\altpricei{\altaltslot} 
\leq
\min(0,\dist{\xvar-\yvari{\ell}}+\altpricei{\ell})
=
\min(0,\dist{\xvar-\yvari{\ell}}+\pricei{\ell}) 
$. Hence, 
$\xvar\in\Intervali{\altaltslot}{\profile}$, and therefore $\Intervali{\altaltslot}{\altprofile}\subseteq\Intervali{\altaltslot}{\profile}$. 
\qed\end{proof}

\smallskip

\newcommand{\coefficientdist}[2]{C}
\newcommand{\altcoefficientdist}[2]{C'}
\newcommand{\deltabounddist}[2]{\zeta}

The next lemma claims the existence of a feasible solution of~\eqref{upper-level} at distance~$O(\delta^{1/4})$ from an optimal solution of~\eqref{upper-level-alter-delta}.
\cref{lemma:solutiongap} relies on \cref{lemma:sponge-slot,lemma:tool}, and it is the basis of our proof to \cref{prop:upper_bound}.

\begin{lemma}\label{lemma:solutiongap}
There exist 
$\coefficientdist{\distfunction}{\yvari{1:n}},\deltabounddist{\distfunction}{\yvari{1:n}}>0$, 
and maps $\profile,\altprofile \colon (0,\deltabounddist{\distfunction}{\yvari{1:n}}] 
\to \Real^n$ such that 
$\profile(\delta)\in 
\big(
[\pricemindelta{\delta}-\delta,\pricemax+\delta]
\cap
\delta\Integer\big)^n
$ is an optimal solution of~\eqref{upper-level-alter-delta}, $\altprofile(\delta)$ is a feasible solution of~\eqref{upper-level}, and we have $\|\profile(\delta)-\altprofile(\delta)\|_\infty\leq\coefficientdist{\distfunction}{\yvari{1:n}} \sqrt[4]\delta$.
\end{lemma}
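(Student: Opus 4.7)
The plan is to take $\profile(\delta)$ to be any optimal solution of~\eqref{upper-level-alter-delta} lying in $\big([\pricemindelta{\delta}-\delta,\pricemax+\delta]\cap\delta\Integer\big)^n$, whose existence is granted by \cref{prop:bounded-delta}, and to build $\altprofile(\delta)$ by correcting $\profile(\delta)$ one slot at a time through a two-phase perturbation. Write $\varepsilon(\delta)\coloneqq\xmeasuremax\lipschitz\delta+2\xmeasuremax\sqrt{2\delta/\stronglyconvex}$; feasibility of $\profile(\delta)$ in~\eqref{upper-level-alter-delta} gives $\xmeasure{\Intervali{\slot}{\profile(\delta)}}\leq\ymeasurei{\slot}+\varepsilon(\delta)$ for every $\slot\in[n]$, with $\varepsilon(\delta)=O(\sqrt\delta)$. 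The target is a feasible correction whose sup-norm cost is $O(\delta^{1/4})$.

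In the first phase I would fix a constant $\csth>0$ (to be calibrated below) and apply \cref{lemma:sponge-slot}, obtaining $\MCspongefunction=\lceil\cstDelta{\csth}\delta^{-3/4}\rceil$ and the shifted profile $\profile^{(0)}\coloneqq\profile(\delta)+\MCspongefunction\delta$. A uniform shift of all prices leaves each $\fullIntervali{\slot}{\cdot}$ unchanged and can only shrink each sublevel set $\sublevelseti{\slot}{\cdot}$, so every load at $\profile^{(0)}$ is bounded by the corresponding load at $\profile(\delta)$. If the trivial alternative of \cref{lemma:sponge-slot} holds, all loads at $\profile^{(0)}$ are zero and setting $\altprofile(\delta)\coloneqq\profile^{(0)}$ yields feasibility with $\|\profile(\delta)-\altprofile(\delta)\|_\infty=\MCspongefunction\delta=O(\delta^{1/4})$. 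Otherwise, the lemma produces a sponge slot $\spongeslot\in[n]$ at which the load has dropped by at least $\csth\sqrt\delta$; this slot will absorb the side effects of the next phase.

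In the second phase I would sweep the remaining slots by iterated use of \cref{lemma:tool}: starting from $\profile^{(0)}$, process slots $\slot=1,\dots,\spongeslot-1$ in increasing order via part~(i), then slots $\slot=n,\dots,\spongeslot+1$ in decreasing order via part~(ii), each time choosing $\Delta_\slot\coloneqq\max\bigl(0,\xmeasure{\Intervali{\slot}{\profile^{(\slot-1)}}}-\ymeasurei{\slot}\bigr)$ so that the new profile $\profile^{(\slot)}$ satisfies the capacity at slot~$\slot$. The monotone structure of \cref{lemma:tool} ensures that loads at already-processed slots are not disturbed, loads beyond $\spongeslot$ are left invariant, and the load at any not-yet-processed slot in the interval between $\slot$ and $\spongeslot$ grows by at most $\xmeasuremax\MCtool\lipschitz\Delta_\slot$. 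A short induction, relying on the fact that $n$ is a fixed dimension, then yields a uniform bound $\max_\slot\Delta_\slot=O(\sqrt\delta)$. Let $\altprofile(\delta)$ be the profile at the end of the sweep.

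Feasibility at $\spongeslot$ follows from a single inequality: summing the side effects gives $\xmeasure{\Intervali{\spongeslot}{\altprofile(\delta)}}\leq\xmeasure{\Intervali{\spongeslot}{\profile^{(0)}}}+(n-1)\xmeasuremax\MCtool\lipschitz\max_\slot\Delta_\slot\leq\ymeasurei{\spongeslot}+\varepsilon(\delta)-\csth\sqrt\delta+O(\sqrt\delta)$, and choosing $\csth$ larger than the absolute constants hidden in the $O(\sqrt\delta)$ term makes the right-hand side at most $\ymeasurei{\spongeslot}$ for all $\delta$ below a threshold $\deltabounddist{\distfunction}{\yvari{1:n}}$. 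The total perturbation is then $\|\profile(\delta)-\altprofile(\delta)\|_\infty\leq\MCspongefunction\delta+\MCtool\max_\slot\Delta_\slot=O(\delta^{1/4})$, giving the lemma with an appropriate constant $\coefficientdist{\distfunction}{\yvari{1:n}}$. The main obstacle will be the inductive control of $\max_\slot\Delta_\slot$: each $\Delta_\slot$ inherits side effects from all earlier adjustments, so the bound must be closed against itself. Finiteness of the sweep keeps the amplification factor at a dimensional constant, after which $\csth$ can be chosen large enough to make the sponge-slot slack dominate the cumulative error. A minor technicality is that the intermediate profiles $\profile^{(\slot)}$ need not lie in $\delta\Integer$, which must be checked against the hypotheses of \cref{lemma:tool}; the grid plays no essential role in its proof, only the lower bound $\pricemindelta{\delta}$ on the prices, and this bound is preserved throughout the sweep because all adjustments are nonnegative.
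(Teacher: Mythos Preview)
Your approach is essentially the paper's: take an optimal $\profile(\delta)$ for \eqref{upper-level-alter-delta}, uniformly shift by $\MCspongefunction\delta$ via \cref{lemma:sponge-slot} to create a sponge slot, then sweep the remaining slots with \cref{lemma:tool}. The paper differs only in one implementation detail: instead of your adaptive $\Delta_\slot=\max(0,\xmeasure{\Intervali{\slot}{\cdot}}-\ymeasurei{\slot})$, it sets the $\Delta_\slot$ a priori as a geometric sequence $\Delta_\slot=(1+\xmeasuremax\MCtool\lipschitz)^{\slot-1}\Delta$ (and symmetrically on the right), which lets it verify feasibility and compute $\csth$ explicitly via geometric sums rather than by an inductive amplification argument. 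Both routes give the same $O(\delta^{1/4})$ bound.

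One small correction: your final perturbation estimate $\|\profile(\delta)-\altprofile(\delta)\|_\infty\leq\MCspongefunction\delta+\MCtool\max_\slot\Delta_\slot$ undercounts. Processing slot $\slot<\spongeslot$ via part~(i) adds $\MCtool\Delta_\slot$ to \emph{every} slot $1,\dots,\slot$, so slot~$1$ accumulates $\MCtool\sum_{\slot=1}^{\spongeslot-1}\Delta_\slot$, not $\MCtool\max_\slot\Delta_\slot$. The correct bound is $\MCspongefunction\delta+\MCtool\sum_\slot\Delta_\slot$, which is still $O(\delta^{1/4})$ since $\sum_\slot\Delta_\slot\leq(n-1)\max_\slot\Delta_\slot=O(\sqrt\delta)$. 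Your observation that the $\delta\Integer$ hypothesis in \cref{lemma:tool} is inessential is correct (and indeed the paper's own sweep applies \cref{lemma:tool} to profiles with non-grid entries).
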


We emphasize that the proof of \cref{lemma:solutiongap} will make clear that the constants~$\coefficientdist{\distfunction}{\yvari{1:n}}$ and~$\deltabounddist{\distfunction}{\yvari{1:n}}$ involved in the lemma
only depend on the function~$\distfunction$, the support of~$\xmeasurefunction$ and its lower and upper bounds, and the slot times~$\yvari{1},\dots,\yvari{n}$, and are independent of the values of~$\xmeasurefunction$ and~$\ymeasurefunction$.

\begin{proof}[
\cref{lemma:solutiongap}]
\newcommand{\Deltai}[1]{\Delta_{#1}}%
\newcommand{\Deltaij}[2]{\Delta^{(#1)}_{#2}}%
\newcommand{\Ccst}{C}
\newcommand{\Ecst}{E}
\newcommand{\Acst}{A}
\newcommand{\Bcst}{B}
\newcommand{\Dcst}{D}
Applying~\cref{lemma:sponge-slot} with parameter value $
\csth 
= 
4\xmeasuremax (1+ \MCwave)^{n-1}\sqrt{2 /\stronglyconvex}
$,
where $\MCwave = 
\xmeasuremax\MCtool\lipschitz$ and~$\MCtool$ is specified by \cref{lemma:tool}, yields
specific constants $
 \cstDelta{\csth},\deltabound{\csth}>0$. We show the lemma for $ 
 \coefficientdist{\distfunction}{\yvari{1:n}}
 = 
 2\big(
3 +  (1+ \MCwave)^{n-1} \big)(2\lipschitz^2\stronglyconvex)^{-3/4} + 
\cstDelta{\csth}
$, $\deltabounddist{\distfunction}{\yvari{1:n}} = \deltabound{\csth} $, and for maps $\profile,\altprofile$ that we specify as follows. In view of \cref{prop:bounded-delta}, we set~$\profile(\delta)$ to any discrete optimal solution of~\eqref{upper-level-alter-delta} in~$\big(
[\pricemindelta{\delta}-\delta,\pricemax+\delta]
\cap\delta\Integer
\big)^n$.
We are going to construct $\altprofile(\delta)$ based on $\profile(\delta)$,
for the range $
\delta\in(0,\deltabound{\csth} ]
$ in which~\cref{lemma:sponge-slot} applies.
To lighten the text, we drop the argument~$\delta$ and write $\profile\equiv\profile(\delta)$.
Set~$\MCspongefunction= \big\lceil 
\cstDelta{\csth} \delta^{-{3}/{4}}\big\rceil$, $\altaltprofile = \profile+\MCspongefunction\delta$,
and $\Delta= 4\xmeasuremax\sqrt{2\delta /\stronglyconvex} $.
By definition of~$\profile$ and~$\altaltprofile$, we have $
\fullIntervali{\slot}{\profile}
=
\fullIntervali{\slot}{\altaltprofile}
$
and 
$
\sublevelseti{\slot}{\pricei{\slot}}
\supseteq
\sublevelseti{\slot}{\altaltpricei{\slot}}
$ for $\slot\in[n]$. 
It follows from~\eqref{intersections:a} 
that 
$
\Intervali{\slot}{\profile}
\supseteq
\Intervali{\slot}{\altaltprofile}
$ 
and
$
\xmeasure{
\Intervali{\slot}{\profile}
}
\geq
\xmeasure{
\Intervali{\slot}{\altaltprofile}
}
$ 
for $\slot\in[n]$.
Since~$
\profile
\in(\delta\Integer)^n$ and~$\profile$ is an optimal solution of~\eqref{upper-level-alter-delta}, we get, for $\slot\in[n]$,   
\begin{equation}\label{doubleinequality:both}
\xMeasure{\Intervali{j}{\altaltprofile}} 
\leq
\xMeasure{\Intervali{\slot}{\profile}}
=
\xMeasure{
\Intervali{\slot}{\profilemdelta{\delta}}
}
\leq
\ymeasurei{\slot} + \xmeasuremax\lipschitz \delta 
+ 2\xmeasuremax\sqrt{2 \delta /\stronglyconvex}  
\leq
\ymeasurei{\slot} 
+ \Delta  
\, ,
\end{equation}
where we use 
$\delta\leq\sqrt{\deltamax\delta}
=
(2/\lipschitz)\sqrt{2 \delta/\stronglyconvex}
$.

\smallskip

First suppose that 
$
\sum_{\slot=1}^{n}
\xmeasure{\Intervali{\slot}{\altaltprofile}}
=
0
$.
 Then, we have $
\xmeasure{\Intervali{\slot}{\altaltprofile}}
=
0
$ 
for all~$\slot\in[n]$. Hence, $\altprofile=\altaltprofile
$ is a feasible price profile of~\eqref{upper-level}.

\smallskip

Otherwise, according to \cref{lemma:sponge-slot}, there is a time slot~$\spongeslot\in[n]$ such that 
\begin{equation}\label{spongeassumption}
\xmeasure{\Intervali{\spongeslot}{\altaltprofile}}
+
 \csth \sqrt{\delta}
 \leq
\xMeasure{\Intervali{\spongeslot}{\profile}}  
=
\xMeasure{\Intervali{\spongeslot}{\profilemdelta{\delta}}}  
\refereq{\eqref{doubleinequality:both}}{\leq}
\ymeasurei{\spongeslot} 
+ \Delta  
 \, . 
\end{equation}
From~\eqref{intersections:a} and the first inequality in~\eqref{spongeassumption}, we find
$
\xmeasure{\fullIntervali{\spongeslot}{\profile}} \geq
\xMeasure{\Intervali{\spongeslot}{\profile}}  
\geq
\xmeasure{\Intervali{\spongeslot}{\altaltprofile}}
+  \csth \sqrt{\delta}
>0
$.
 Hence, 
the service load $\xmeasure{\fullIntervali{\spongeslot}{\profile}}$ is a positive quantity.
Now, consider the vectors
\[
\heavisidemvectori{\slot} 
=
(\underbrace{1,\dots,1}_{\slot},0,\dots,0)
\quad\text{and}\quad
\heavisidepvectori{\slot} 
=
(\underbrace{0,\dots,0,1}_{\slot},1,\dots,1)
\quad
\textup{for }
 \slot\in[n]
 \, .
\]
We derive the price profile
$
\altprofile
=
\altaltprofile+ \MCtool \sum_{\slot=1}^{\spongeslot-1}\Deltai{\slot}\heavisidemvectori{\slot}
+
 \MCtool \sum_{\slot=\spongeslot+1}^{n}\Deltai{\slot}\heavisidepvectori{\slot}
$,
where
$\Deltai{\slot} = (1+ \MCwave)^{\slot-1}\Delta$ for $\slot=1,\dots,\spongeslot-1$ and $\Deltai{\slot} = (1+ \MCwave)^{n-\slot}\Delta$ for $\slot=\spongeslot+1,\dots,n$.
We apply $\spongeslot-1$ times 
 \cref{lemma:tool}\ref{lemma:tool:left} with successive parameters~$\Deltai{1},\dots,\Deltai{\spongeslot-1}$, and then $n-\spongeslot$ times \cref{lemma:tool}\ref{lemma:tool:right}  with successive parameters~$\Deltai{n},\dots,\Deltai{\spongeslot+1}$. 
 This is possible because 
 $\xmeasure{\fullIntervali{\spongeslot}{\profile}}>0$, 
 $\fullIntervali{\spongeslot}{\altaltprofile} = \fullIntervali{\spongeslot}{\profile}$ (all the prices are translated upward by the same quantity), and by induction the~$n-1$ price profiles successively considered for \cref{lemma:tool} 
 meet this condition as well%
, with prices not lower than~$\pricemin$%
.
Using
$
\sum_{\altaltslot=1}^{\slot-1}\MCwave\Deltai{\altaltslot} 
-
\Deltai{\slot} 
=
-\Delta
$ 
when 
$1 \leq \slot < \spongeslot$,
$
\sum_{\altaltslot=\slot+1}^{n}\MCwave\Deltai{\altaltslot}  
-
\Deltai{\slot} 
=
-\Delta
$
when
$\spongeslot < \slot \leq  n$,
and the inequality
$
\sum_{\altaltslot=1}^{\spongeslot-1}\MCwave\Deltai{\altaltslot} + \sum_{\altaltslot=\spongeslot+1}^{n}\MCwave\Deltai{\altaltslot}
=
(1+ \MCwave)^{\spongeslot-1}\Delta + (1+ \MCwave)^{n-\spongeslot}\Delta-2\Delta 
\leq 
(1+ \MCwave)^{n-1}\Delta - \Delta
=
\csth \sqrt{\delta} - \Delta
$,
we find
\[
\begin{array}{ll}
 \bigxmeasure{\Intervali{\slot}{\altprofile}}
 \leq
 \max\big(0,\xmeasure{\Intervali{\slot}{\altaltprofile}} + \sum_{\altaltslot=1}^{\slot-1}\MCwave\Deltai{\altaltslot} - \Deltai{\slot}\big)
 \refereq{\eqref{doubleinequality:both}}{\leq}
 \ymeasurei{\slot}
&
\text{if }
   1 \leq \slot < \spongeslot \, ,
\\
 \bigxmeasure{\Intervali{\slot}{\altprofile}}
 \leq
 \max\big(0,\xmeasure{\Intervali{\slot}{\altaltprofile}} + \sum_{\altaltslot=\slot+1}^{n}\MCwave\Deltai{\altaltslot} - \Deltai{\slot}\big)
 \refereq{\eqref{doubleinequality:both}}{\leq}
 \ymeasurei{\slot}
\quad&
\text{if }
   \spongeslot < \slot \leq  n 
\, ,
\\
\multicolumn{2}{l}{
\bigxmeasure{\Intervali{\spongeslot}{\altprofile}}
 \leq
 \max\big(0,\xmeasure{\Intervali{\spongeslot}{\altaltprofile}} + \sum_{\altaltslot=1}^{\spongeslot-1}\MCwave\Deltai{\altaltslot} + \sum_{\altaltslot=\spongeslot+1}^{n}\MCwave\Deltai{\altaltslot}\big)
 \refereq{\eqref{spongeassumption}}{\leq}
  \ymeasurei{\spongeslot}
\, .
}
\end{array}
\]
Hence, we have $  \bigxmeasure{\Intervali{\slot}{\altprofile}}
 \leq
 \ymeasurei{\slot}
$
for all $\slot\in[n]$, and~$\altprofile$ is a feasible price profile of~\eqref{upper-level}.

\smallskip

In any case, $\altprofile(\delta)\equiv\altprofile$ is a feasible price profile of~\eqref{upper-level}. 
Given $\delta\in(0,\deltabound{\csth}]$, we find 
$
\delta\leq\sqrt[4]{(\deltamax)^{3}\delta}
 =
8 \sqrt[4]{\delta/(2\lipschitz^2\stronglyconvex)^3}
$, $
\sqrt{\delta}\leq\sqrt[4]{\deltamax\delta}
=
2\sqrt[4]{\delta/(2\lipschitz^2\stronglyconvex)}
$ and, after computations, 
$ 
\|\profile(\delta)-\altprofile(\delta) \|_\infty 
\leq
\big[8 [2(1+ \MCwave)^{n-1}-1] (2\lipschitz^2\stronglyconvex)^{-3/4} + 
\cstDelta{\csth} \big] \delta^{{1}/{4}}
$,
which completes the proof.
\qed\end{proof}

\smallskip
We are now in a position to show \cref{prop:upper_bound}.

\begin{proof}[
\cref{prop:upper_bound}]
\newcommand{\profiledistance}{z}
Using \cref{lemma:solutiongap}, we derive constants $\coefficientdist{\distfunction}{\yvari{1:n}}$, $\deltabounddist{\distfunction}{\yvari{1:n}}$, 
and maps $\profile,\altprofile \colon (0,\deltabounddist{\distfunction}{\yvari{1:n}}] 
\to \Real^n$ 
such that $\profile(\delta)\in 
\big(
[\pricemindelta{\delta}-\delta,\pricemax+\delta]
\cap
\delta\Integer\big)^n
$ is an optimal solution of~\eqref{upper-level-alter-delta}, $\altprofile(\delta)$ is a feasible solution of~\eqref{upper-level}, and  $
\profiledistance
\coloneqq
\|\profile(\delta)-\altprofile(\delta)\|_\infty\leq\coefficientdist{\distfunction}{\yvari{1:n}} \sqrt[4]{\delta}
$ holds for all~$\delta\in(0,\deltabounddist{\distfunction}{\yvari{1:n}}]$.

Let~$\delta\in(0,\deltabounddist{\distfunction}{\yvari{1:n}}]$.
To lighten the text, we drop the argument~$\delta$ and write $\profile\equiv\profile(\delta)$ and $\altprofile\equiv\altprofile(\delta)$.
We compute the distance between~$\profit{\altprofile}\coloneq \sum_{\altaltslot=1}^n \altpricei{\altaltslot} \mu(\Intervali{\altaltslot}{\altprofile})$, which defines the revenue induced by~$\altprofile$ in~\eqref{upper-level}, 
and the upper bound $\profitdelta{\delta}{\profile}+n\xmeasuremax\lipschitz \delta\pricemax$, where
$\profitdelta{\delta}{\profile} := \sum_{\altaltslot=1}^n
(\pricemdeltai{\delta}{\altaltslot}+\delta)
\xmeasure{
\Intervali{\altaltslot}{\profilemdelta{\delta}}
} = \OPTdelta{\delta}{\Real}$ defines the revenue induced by~$\profile$ in problem~\eqref{upper-level-alter-delta}.
We know from~\eqref{intersections:a} that $
\Intervali{\altaltslot}{\profile} \symdif \Intervali{\altaltslot}{\altprofile}
\subseteq
\big(\fullIntervali{\altaltslot}{\profile}\symdif\fullIntervali{\altaltslot}{\altprofile}\big)  \cup \big(\sublevelseti{\altaltslot}{\pricei{\altaltslot}}\symdif \sublevelseti{\altaltslot}{\altpricei{\altaltslot}}\big)
$.
Using \cref{lemma:twolemmasinone} with the parameter values~$\eta=\eta'=\profiledistance$, we find
$
|\fullIntervali{j}{\altprofile}\setminus\fullIntervali{j}{\profile}|
\leq
2\profiledistance\lipschitz
$ and $
|\fullIntervali{j}{\profile}\setminus \fullIntervali{j}{\altprofile}| \leq 2\profiledistance\lipschitz
$, which imply
 $| \fullIntervali{j}{\profile} \symdif \fullIntervali{j}{\altprofile} | = |\fullIntervali{j}{\altprofile}\setminus \fullIntervali{j}{\profile}| + |\fullIntervali{j}{\profile}\setminus \fullIntervali{j}{\altprofile}|\leq 4  \profiledistance\lipschitz$.
From \cref{lemma:sublevelsets}, 
we also have $
| \sublevelseti{\altaltslot}{\pricei{\altaltslot}} \symdif \sublevelseti{\altaltslot}{\altpricei{\altaltslot}} | 
\leq 
2\sqrt{2 /\stronglyconvex}|\pricei{\altaltslot}-\altpricei{\altaltslot}|^{1 /2}
\leq
2\sqrt{2\profiledistance /\stronglyconvex}
$.
It follows that
$
| \Intervali{\altaltslot}{\profile} \symdif \Intervali{\altaltslot}{\altprofile} | 
\leq
|\fullIntervali{\altaltslot}{\profile}\symdif\fullIntervali{\altaltslot}{\altprofile}
|+|
\sublevelseti{\altaltslot}{\pricei{\altaltslot}}\symdif \sublevelseti{\altaltslot}{\altpricei{\altaltslot}}
|
\leq 
4 \profiledistance\lipschitz
+
2\sqrt{2\profiledistance /\stronglyconvex}
$. Hence,
$\xmeasure{\Intervali{\altaltslot}{\profile}}
\leq
\xmeasure{\Intervali{\altaltslot}{\altprofile}}
+
\xmeasure{\Intervali{\altaltslot}{\profile}\symdif\Intervali{\altaltslot}{\altprofile}}
\leq 
\xmeasure{\Intervali{\altaltslot}{\altprofile}}
+
4 \xmeasuremax\profiledistance\lipschitz
+
2\xmeasuremax\sqrt{2\profiledistance /\stronglyconvex}
$
holds for~$\altaltslot\in[n]$.
Using $\profile\in(\delta\Integer)^n$, we find
\begin{equation*}\label{profitdelta}
\nocolsep
\begin{array}{ll}
 \profitdelta{\delta}{\profile} 
 \hspace{-5mm}&\hspace{5mm}
=
\sum_{\altaltslot=1}^n (\pricei{\altaltslot} + \delta)  \bigxmeasure{\Intervali{\altaltslot}{\profile}}
 \\&
\leq
\sum_{\altaltslot=1}^n 
(\pricei{\altaltslot} + \delta)
\big( \xmeasure{\Intervali{\altaltslot}{\altprofile}}
+
4 \xmeasuremax \profiledistance\lipschitz
+
2\xmeasuremax\sqrt{2\profiledistance /\stronglyconvex}
 \big)
\\&
=
\sum_{\altaltslot=1}^n 
(\altpricei{\altaltslot}+(\pricei{\altaltslot} -\altpricei{\altaltslot}) + \delta) \xmeasure{\Intervali{\altaltslot}{\altprofile}}
+
2\xmeasuremax
\big( 
2 \profiledistance\lipschitz
+
\sqrt{2\profiledistance /\stronglyconvex}
 \big)
\sum_{\altaltslot=1}^n 
(\pricei{\altaltslot} + \delta)
\ \\& 
=
\profit{\altprofile} 
+
\sum_{\altaltslot=1}^n 
((\pricei{\altaltslot} -\altpricei{\altaltslot}) + \delta) \xmeasure{\Intervali{\altaltslot}{\altprofile}}
\\&\hfill
+
2\xmeasuremax
\big( 
2 \profiledistance\lipschitz
+
\sqrt{2\profiledistance /\stronglyconvex}
 \big)
\big(n\delta+ \sum_{\altaltslot=1}^n 
\pricei{\altaltslot} \big) 
\, ,
\end{array}
\end{equation*}
where $\pricei{\altaltslot}\leq\pricemax+\delta$ for all~$\altaltslot\in[n]$.
For all~$\delta\in(0,\deltabounddist{\distfunction}{\yvari{1:n}}]$, we thus find the inequality
$
 \profitdelta{\delta}{\profile(\delta)} 
+n\xmeasuremax\lipschitz \delta\pricemax
\leq
\profit{\altprofile(\delta)} 
+
\vanishing{\delta}
$, 
where the last term is given by 
$
\vanishing{\delta}
=
n \xmeasure{\Real} (\profiledistance + \delta) 
+
2n\xmeasuremax
\big( 
2 \profiledistance\lipschitz
+
\sqrt{2\profiledistance /\stronglyconvex}
 \big)
(\pricemax + 2 \delta)
+n\xmeasuremax\lipschitz \delta\pricemax
$.
It follows from
$
\profiledistance
\leq
\coefficientdist{\distfunction}{\yvari{1:n}} \sqrt[4]{\delta}
$
that $ \vanishing{\delta} =O(\delta^{1/8}) $.
Finally, we find
$ 
\OPTdelta{\delta}{\Real}
+n\xmeasuremax\lipschitz \delta\pricemax
=
\profitdelta{\delta}{\profile(\delta)} 
+n\xmeasuremax\lipschitz \delta\pricemax
\leq
\profit{\altprofile(\delta)} 
+
\vanishing{\delta}
\leq
\OPT{\delta\Integer}
+
\vanishing{\delta}
$, which completes the proof.%
\qed\end{proof}
%


\newcommand{\types}{\ell}%
\newcommand{\durations}{m}%

\section{Conclusion}

We devise polynomial-time algorithms for an industrial problem of time slot pricing formulated as a bilevel program. In our framework, revenue maximization is reduced to finding longest paths in directed acyclic graphs. The solutions developed in this paper rest on a series of assumptions.

When the set of prices~$P$ is finite, the users' ability to choose a unique ($\xmeasurefunction$-almost everywhere) time slot follows from the strict convexity of the cost function~$\distfunction$---on which \cref{lemma:monotonicity,prop:intervals} directly rely---together with the non-atomic nature of the population~$\xmeasurefunction$. Without these two assumptions, the bilevel problem becomes ambiguous as to which time slots indecisive users may choose. An optimistic approach then is to introduce additional incentives to assist the indecisive users in their final decisions. A more conservative approach is to stick to the assumption that each user may choose any time slot that minimizes their total cost, in which case the sets $\fullIntervali{1}{\profile},\dots,\fullIntervali{n}{\profile}$ remain ordered as in \cref{prop:intervals} but cease to have disjoint interiors. Relaxing these two assumptions would constitute a natural extension of the present work. How the results extend to this setting remains to be clarified.

When~$P$ is~$\Real$, lower bounds for the optimal revenue are obtained under the same assumptions by solving~\eqref{upper-level} over discretized~$P$. Strong convexity of~$\xmeasurefunction$ allows us to derive upper bounds in a similar way by solving an approximate problem~\eqref{upper-level-alter-delta}. Asymptotical exactness of the bounds and polynomial convergence in the discretization step follow under the additional assumption that~$\xmeasurefunction$ is interval-supported and its density is bounded above zero.

Key aspects of the graphical approach taken in this work are simplicity of use, depth of analysis, and flexbility. 
Refinements can be incorporated into the basic model to reflect user diversity, for instance in cost function or in service duration---some of these refinements translating into simple operations on graphs. We leave open the question of computational complexity in more elaborate settings of the time slot pricing problem.


\begin{acknowledgements}
We are grateful to our colleagues at EDF R\&D, Guilhem Dupuis and Cheng Wan, for many stimulating discussions that played a key role in identifying the problem and clarifying its industrial relevance. This research benefited from the support of the FMJH Program Gaspard Monge for optimization and operations research and their interactions with data science.
No new data were created or analyzed in this study. Data sharing is not applicable to this article.
\end{acknowledgements}

\bibliographystyle{spmpsci}
\urlstyle{same} 
\bibliography{ENPC}

@inproceedings{akkerman22,
author = {Akkerman, Fabian and Mes, Martijn and Lalla-Ruiz, Eduardo},
title = {Dynamic Time Slot Pricing Using Delivery Costs Approximations},
year = {2022},
isbn = {978-3-031-16578-8},
publisher = {Springer-Verlag},
address = {Berlin, Heidelberg},
url = {https://doi.org/10.1007/978-3-031-16579-5_15},
doi-off = {10.1007/978-3-031-16579-5_15},
abstract = {Attended home delivery (AHD) is a popular type of home delivery for which companies typically offer delivery time slots. The costs for offering time slots are often double compared to standard home delivery services (Yrj\"{o}l\"{a}, 2001). To influence customers to choose a time slot that results in fewer travel costs, companies often give incentives (discounts) or penalties (delivery charges) depending on the costs of a time slot. The main focus of this paper is on determining the costs of a time slot and adjusting time slot pricing accordingly, i.e., dynamic pricing. We compare two time slot cost approximation methods, a cheapest insertion formula and a method employing random forests with a limited set of features. Our results show that time slot incentives have added value for practice. In a hypothetical situation where customers are infinitely sensitive to incentives, we can plan 6\% more customers and decrease the per-customer travel costs by 11\%. Furthermore, we show that our method works especially well when customer locations are heavily clustered or when the area of operation is sparsely populated. For a realistic case of a European e-grocery retailer, we show that we can save approximately 6\% in per-customer travel costs, and plan approximately 1\% more customers when using our time slot incentive policy.},
booktitle = {Computational Logistics: 13th International Conference, ICCL 2022, Barcelona, Spain, September 21–23, 2022, Proceedings},
editor = {de Armas, Jesica and Ramalhinho, Helena and Vo\ss{}, Stefan},
pages = {214–230},
numpages = {17},
keywords = {Cost approximation, Machine learning, Vehicle routing, Dynamic pricing, Time slot management},
location = {Barcelona, Spain}
}

@book{bertsekas2003convex,
  title={Convex Analysis and Optimization},
  author={Bertsekas, Dimitri and Nedic, Angelia and Ozdaglar, Asuman},
  volume-off={1},
  year={2003},
  publisher={Athena Scientific},
  isbn={1-886529-45-0}
}

@article{burkard96,
title = {Perspectives of {M}onge properties in optimization},
journal-off = {Discrete Applied Mathematics},
journal = {Discrete Appl. Math.},
volume = {70},
number = {2},
pages = {95-161},
year = {1996},
issn = {0166-218X},
doi-off = {10.1016/0166-218X(95)00103-X},
url = {https://doi.org/10.1016/0166-218X(95)00103-X},
url-off = {https://www.sciencedirect.com/science/article/pii/0166218X9500103X},
author = {Rainer E. Burkard and Bettina Klinz and R\"{u}diger Rudolf},
keywords = {Monge property, Monge matrices, Combinatorial optimization, Recognition problems},
abstract = {An m × n matrix C is called Monge matrix if cij + crs ⩽ cis + crj for all 1 ⩽ i < r ⩽ m, 1 ⩽ j < s ⩽ n. In this paper we present a survey on Monge matrices and related Monge properties and their role in combinatorial optimization. Specifically, we deal with the following three main topics: (i) fundamental combinatorial properties of Monge structures, (ii) applications of Monge properties to optimization problems and (iii) recognition of Monge properties.}
}

@article{colson2007overview,
  title={An overview of bilevel optimization},
  author={Colson, Beno{\^\i}t and Marcotte, Patrice and Savard, Gilles},
  journal={Ann. Oper. Res.},
  volume={153},
  pages={235--256},
  year={2007},
  publisher={Springer},
  url={https://doi.org/10.1007/s10479-007-0176-2},
  doi-off={10.1007/s10479-007-0176-2},
  url-off={https://link.springer.com/article/10.1007/s10479-007-0176-2}
}

@article{chinhuihao24,
title = {Dynamic pricing in consumer-centric electricity markets: {A} systematic review and thematic analysis},
journal = {Energy Strateg. Rev.},
volume = {52},
pages = {101349},
year = {2024},
issn = {2211-467X},
doi-off = {10.1016/j.esr.2024.101349},
url = {https://doi.org/10.1016/j.esr.2024.101349},
url-off = {https://www.sciencedirect.com/science/article/pii/S2211467X24000567},
author = {Chin Hui Hao and Presley K Wesseh and Jin Wang and Hermas Abudu and Kingsley E Dogah and David Iheke Okorie and Eric Evans {Osei Opoku}},
abstract = {Tariff reforms in electricity markets is becoming an area of active evaluation as efforts intensify to decarbonize electricity generation, stabilize the power grid, and maximize the welfare of electricity producers and consumers. Peaks in load profiles occur when demand is not regulated, requiring huge capacity additions, which are not utilized during off-peak periods thus undermining system efficiency. With dynamic pricing of electricity, opportunities can arise to shift demand from peak to off-peak periods and this can help to reduce the need for huge capacity additions. This paper conducts a systematic review of dynamic electricity pricing literature in order to understand the evolution of research in this growing field. 218 relevant journal papers published between 1949 and 2020 are identified from which thematic maps and ontological tables are drawn to understand key subject areas that have emerged over the years and their interlinkages. Six major thematic areas of dynamic electricity pricing research are reported including 1) pricing scheme and modeling, 2) pricing impacts, 3) user demand response, 4) consumption scheduling, 5) load scheduling technologies, and 6) cybersecurity threats and fairness issues. These six major thematic areas are then broken down into twenty second-order themes and fifty eight first-order themes. The review provides an overview of the development of the field while providing insights on its future development. It concludes that, despite coherent themes, the research on dynamic electricity pricing has enormous potential for follow-up work and theory development.}
}

@article{crew95,
  title={The theory of peak-load pricing: {A} survey},
  author={Crew, Michael A and Fernando, Chitru S and Kleindorfer, Paul R},
  journal={J. Regul. Econ.},
  volume={8},
  number={3},
  pages={215--248},
  year={1995},
  publisher={Springer},
  doi-off={10.1007/BF01070807},
  url={https://doi.org/10.1007/BF01070807},
  url-off={https://link.springer.com/article/10.1007/BF01070807}
}

@inproceedings{kalakanti25,
author = {Kalakanti, Arun Kumar and Rao, Shrisha},
title = {Dynamic Pricing for Electric Vehicle Charging},
year = {2025},
isbn = {9798400706295},
publisher = {Association for Computing Machinery},
address = {New York, NY, USA},
doi-off = {10.1145/3672608.3707745},
url = {https://doi.org/10.1145/3672608.3707745},
abstract = {Dynamic pricing addresses smart charging challenges by reacting to changes in operating conditions, unlike time-of-use and stationary pricing rates, which reduce charging station revenue and impact grid stability. Previous studies evaluated single objectives or linear combinations of objectives for electric vehicle charging station pricing solutions. We develop a novel formulation for the dynamic pricing problem by addressing multiple conflicting objectives like revenue, quality of service, and peak-to-average ratio efficiently. Our dynamic pricing model quantifies the demand-to-price relationship while simultaneously solving multiple conflicting objectives. We present a three-part dynamic pricing approach integrating the Bayesian model, multi-objective optimization to find optimal Pareto solutions using Non-dominated Sorting Genetic Algorithms II and III, and multi-criteria decision-making. Two California charging sites' real-world data validates our approach.},
booktitle = {Proceedings of the 40th ACM/SIGAPP Symposium on Applied Computing},
editor = {Hong, Jiman and Battiato, Sebastiano and Esposito, Christian and Park, Juw Won and Przybylek, Adam},
pages = {1609–1611},
numpages = {3}
}

@article{kazemtarghi24,
title = {Dynamic pricing strategy for electric vehicle charging stations to distribute the congestion and maximize the revenue},
journal = {Int. J. Elec. Power},
volume = {158},
pages = {109946},
year = {2024},
issn = {0142-0615},
doi-off = {10.1016/j.ijepes.2024.109946},
url = {https://doi.org/10.1016/j.ijepes.2024.109946},
url-off = {https://www.sciencedirect.com/science/article/pii/S0142061524001674},
author = {Abed Kazemtarghi and Ayan Mallik and Yan Chen},
keywords = {Charging station pricing, Dynamic pricing strategy, Electric vehicles charging, Scenario-based stochastic optimization},
abstract = {Electric vehicle (EV) charging station (CS) congestion is highly dependent on the EV owner’s behavior and their selected CS as charging choice. A fixed pricing strategy causes some CSs to be congested with EVs waiting in line to charge, while there are some other CSs with available electric plugs, which adversely affects both charging station operator (CSO) revenue and EV users’ welfare. To solve this problem, this paper presents a dynamic pricing strategy aimed at conducting EVs from congested CSs to the uncongested ones through controlling the charging prices of CSs at different times. The problem is formulated as a scenario-based stochastic optimization with the objective of maximizing overall revenue of the CSO. Moreover, an attraction function model is developed to quantify the charging choice of the EV owners by considering the effective parameters of CSs in EV charging choice decisions. QGIS software is used in this work to formulate a realistic modeling for CS locations and EV routes to calculate the distances from EVs to CSs. Three scenarios are designed to evaluate the performance of the proposed framework and to compare the results with the fixed pricing approach. The results indicate that the proposed dynamic pricing strategy mitigates the congestion of CSs while facilitating an increased number of charged EVs up to 48% as well as increasing the overall revenue of the CSO.}
}

@article{liu23,
author = {Liu, Yuxi and Zhu, Jie and Sang, Yuanrui and Ardakani, Mostafa and Jing, Tianjun and Zhao, Yongning and Zheng, Yingying},
year = {2023},
month = {01},
pages = {1037253},
title = {An aggregator-based dynamic pricing mechanism and optimal scheduling scheme for the electric vehicle charging},
volume = {10},
journal = {Front. Energy Res.},
doi-off = {10.3389/fenrg.2022.1037253},
url = {https://doi.org/10.3389/fenrg.2022.1037253},
URL-off={https://www.frontiersin.org/journals/energy-research/articles/10.3389/fenrg.2022.1037253},
ISSN={2296-598X},
ABSTRACT={High penetration of electric vehicles (EVs) in an uncontrolled manner could have disruptive impacts on the power grid, however, such impacts could be mitigated through an EV demand response program. The successful implementation of an efficient, effective, and aggregated demand response from EV charging depends on the incentive pricing mechanism and the load shifting protocols. In this study, a genetic algorithm-based multi-objective optimization model is developed to generate hourly dynamic Time-of-Use electricity tariffs and facilitate the decision making in load scheduling. As an illustrative example, a case study was carried out to examine the effect of applying demand response for EVs in Beijing, China. With the assumptions made, the maximum peak load can be reduced by 9.8% and the maximum customer savings for the EVs owners can reach 11.85%, compared to the business-as-usual case.}}

@ARTICLE{moghaddam20,
  author={Moghaddam, Valeh and Yazdani, Amirmehdi and Wang, Hai and Parlevliet, David and Shahnia, Farhad},
  journal={IEEE Access}, 
  title={An Online Reinforcement Learning Approach for Dynamic Pricing of Electric Vehicle Charging Stations}, 
  year={2020},
  volume={8},
  number={},
  pages={130305-130313},
  keywords={Cascading style sheets;Pricing;Electric vehicle charging;Load modeling;Charging stations;Mathematical model;Learning (artificial intelligence);Electric vehicles;charging stations;pricing strategy;reinforcement learning},
  doi-off={10.1109/ACCESS.2020.3009419},
  url={https://doi.org/10.1109/ACCESS.2020.3009419},
  url-off={https://ieeexplore.ieee.org/document/9141236}}

@article{moravek70,
title = {A note upon minimal path problem},
journal = {J. Math. Anal. Appl.},
volume = {30},
pages = {702-717},
year = {1970},
issn = {0022-247X},
doi-off = {10.1016/0022-247X(70)90154-X},
url = {https://doi.org/10.1016/0022-247X(70)90154-X},
url-off = {https://www.sciencedirect.com/science/article/pii/0022247X7090154X},
author = {Mor\'{a}vek, Jaroslav},
abstract = {In this paper we shall determine the minimal number of comparisons required for the solution of the minimal path problem in the general directed acyclic graph with labelled edges. The main result of this paper, Theorem 2, shows that the dynamic programming method [1] yields an algorithm for the solution of the considered problem, which requires minimum number of comparisons. The results of this paper generalize Theorem 5 of [2]. The main results of [2] are summarized in [3].}
}

@book{nesterov14,
 author = {Nesterov, Yurii},
 title = {Introductory Lectures on Convex Optimization: A Basic Course},
series={Applied Optimization},
series-editor = {Pardalos, Panos M. and Hearn, Donald W.},
doi-off={10.1007/978-1-4419-8853-9},
url={https://doi.org/10.1007/978-1-4419-8853-9},
url-off={https://link.springer.com/book/10.1007/978-1-4419-8853-9},
publisher={Springer New York, NY},
eBookPackages={Springer Book Archive},
Hardcover-ISBN={978-1-4020-7553-7},
ISBN={978-1-4020-7553-7, 978-1-4419-8853-9},
day={1},
month=dec,
year={2013},
issn={1384-6485},
edition={1st},
numpages={XVIII, 236},
keywords={Optimization, Theory of Computation} 
}

@book{santambrogio15,
  author = {Santambrogio, Filippo},
  title = {Optimal Transport for Applied Mathematicians: Calculus of Variations, PDEs and Modeling},
  series={Progress in Nonlinear Differential Equations and Their Applications}, 
  doi-off={10.1007/978-3-319-20828-2}, 
  url={https://doi.org/10.1007/978-3-319-20828-2},
  url-off = {https://www.math.u-psud.fr/~filippo/OTAM-cvgmt.pdf},
  publisher={Birkh{\"a}user Cham}, 
  eBookPackages={Mathematics and Statistics, Mathematics and Statistics (R0)},
  ISBN={978-3-319-20827-5, 978-3-319-36581-7, 978-3-319-20828-2},
  ISSN={1421-1750, 2374-0280}, 
  edition={1st}, 
  numpages={XXVII, 353},  
  day=17,
  month=oct,
  year = 2015
}

@book{schrijver03,
title={Combinatorial Optimization: Polyhedra and Efficiency},
Author={Schrijver, Alexander},
Series={Algorithms and Combinatorics},
Publisher={Springer Berlin, Heidelberg},
Copyright={Springer-Verlag Berlin Heidelberg 2003},
ISBN={978-3-540-44389-6},
day=10,
month=dec,
year=2002,
ISSN={0937-5511, 2197-6783},
edition={1st},
numpages={CIV, 1879}
}

@incollection{van2005introduction,
  title={An introduction to revenue management},
  author={Van Ryzin, Garrett J and Talluri, Kalyan T},
  booktitle={Emerging Theory, Methods, and Applications},
  editor={Smith, J. Cole},
  series={INFORMS TutORials in Operations Research},
  series-editor={Greenberg, Harvey J.},
  pages={142-194},
  year={2005},
  month=sep,
  publisher={Informs},
chapter = {6},
doi-off = {10.1287/educ.1053.0019},
url = {https://doi.org/10.1016/10.1287/educ.1053.0019},
URL-off = {https://pubsonline.informs.org/doi/abs/10.1287/educ.1053.0019},
eprint = {https://pubsonline.informs.org/doi/pdf/10.1287/educ.1053.0019},
    abstract = { Revenue management (RM) refers to the collection of strategies and tactics firms use to scientifically manage demand for their products and services. It has gained attention recently as one of the most successful application areas of operations research (OR). The practice has grown from its origins as a relatively obscure practice among a handful of major airlines in the postderegulation era in the United States (circa 1978) to its status today as a mainstream business practice with a growing list of industry users, ranging from Walt Disney Resorts to National Car Rental. The economic impact of RM is significant, with increases in revenue of 5\% or more reported in several industry applications of RM systems. Professional practice and research in the area is also expanding. There are now several major industry RM conferences each year, and published research on the methodology of RM has been growing rapidly. This chapter provides an introduction to this increasingly important subfield of OR. It is based on excerpts from our book The Theory and Practice of Revenue Management [K. T. Talluri and G. J. van Ryzin. The Theory and Practice of Revenue Management. Springer Science + Business Media, Berlin, Germany, 2004.]. }
}

@article{strauss2021dynamic,
  title={Dynamic pricing of flexible time slots for attended home delivery},
  author={Strauss, Arne and G{\"u}lp{\i}nar, Nalan and Zheng, Yijun},
  journal={	Eur. J. Oper. Res.},
journal-off = {European Journal of Operational Research},
volume = {294},
number = {3},
  pages={1022-1041},
  year={2021},
issn = {0377-2217},
doi-off = {10.1016/j.ejor.2020.03.007},
url = {https://doi.org/10.1016/j.ejor.2020.03.007},
url-off = {https://www.sciencedirect.com/science/article/pii/S0377221720302216},
keywords = {Revenue management, E-commerce, Flexible product, Dynamic programming},
abstract = {In e-commerce, customers are usually offered a menu of home delivery time windows of which they need to select exactly one, even though at least some customers may be more flexible. To exploit the flexibility of such customers, we propose to introduce flexible delivery time slots, defined as any combination of such regular time windows (not necessarily adjacent). In selecting a flexible time slot (out of a set of windows that form the flexible product), the customer agrees to be informed only shortly prior to the dispatching of the delivery vehicle in which regular time window the goods will arrive. In return for providing this flexibility, the company may offer the customer a reduced delivery charge and/or highlight the environmental benefits. Our framework also can accommodate customized flexible slots where customers can self-select a set of regular slots in which a delivery may take place. The vehicle routing problem (VRP) in the presence of flexible time slots bookings corresponds to a VRP with multiple time windows. We build on literature on demand management and vehicle routing for attended home delivery, as well as on flexible products. These two concepts have not yet been combined, and indeed the results from the flexible products literature do not carry over directly because future expected vehicle routing implications need to be taken into account. The main methodological contribution is the development of a tractable linear programming formulation that links demand management decisions and routing cost implications, whilst accounting for customer choice behavior. The output of this linear program provides information on the (approximate) opportunity cost associated with specific orders and informs a tractable dynamic pricing policy for regular and flexible slots. Numerical experiments, based on realistically-sized scenarios, indicate that expected profit may increase significantly depending on demand intensity when adding flexible slots rather than using only regular slots.}
}

@article{asdemir2009dynamic,
  title={Dynamic pricing of multiple home delivery options},
  author={Asdemir, Kursad and Jacob, Varghese S and Krishnan, Ramayya},
  journal={Eur. J. Oper. Res.},
  volume={196},
  pages={246--257},
  year={2009},
  doi-off = {10.1016/j.ejor.2008.03.005},
  url = {https://doi.org/10.1016/j.ejor.2008.03.005},
  url-off = {https://www.sciencedirect.com/science/article/pii/S0377221708002634}
}

@article{lebedev2021dynamic,
  title = {A dynamic programming framework for optimal delivery time slot pricing}, 
  author = {Denis Lebedev and Paul Goulart and Kostas Margellos},
  old-author={Lebedev, Denis and Goulart, Paul and Margellos, Kostas},
  journal={Eur. J. Oper. Res.},
  old-journal = {European Journal of Operational Research},
  publisher={Elsevier},
  volume = {292}, 
  number = {2},
  pages = {456-468},
  year = {2021},
  issn = {0377-2217},
  doi-off = {10.1016/j.ejor.2020.11.010},
  url = {https://doi.org/10.1016/j.ejor.2020.11.010},
  url-off= {https://www.sciencedirect.com/science/article/pii/S0377221720309565},
  keywords = {Dynamic programming, Revenue management, Discrete convex analysis}, 
  abstract = {We study the dynamic programming approach to revenue management in the context of attended home delivery. We draw on results from dynamic programming theory for Markov decision problems to show that the underlying Bellman operator has a unique fixed point. We then provide a closed-form expression for the resulting fixed point and show that it admits a natural interpretation. Moreover, we also show that – under certain technical assumptions – the value function, which has a discrete domain and a continuous codomain, admits a continuous extension, which is a finite-valued, concave function of its state variables, at every time step. Furthermore, we derive results on the monotonicity of prices with respect to the number of orders placed in our setting. These results open the road for achieving scalable implementations of the proposed formulation, as it allows making informed choices of basis functions in an approximate dynamic programming context. We illustrate our findings on a low-dimensional and an industry-sized numerical example using real-world data, for which we derive an approximately optimal pricing policy based on our theoretical results.}
 }

@incollection{anjos2024integrated,
  title={Integrated Location, Sizing, and Pricing for {EV} Charging Stations}, 
  author={Anjos, Miguel F. and Bouras, Ikram and Brotcorne, Luce and Weldeyesus, Alemseged G. and Alasseur, Cl{\'e}mence and Zorgati, Riadh},
  booktitle={Comb. Optim. Appl.: A Tribute to Bernard Gendron},
  bookTitle-old={Combinatorial Optimization and Applications: A Tribute to Bernard Gendron},
  editor={Crainic, Teodor Gabriel and Gendreau, Michel and Frangioni, Antonio},
  pages={431--448},
  year={2024},  
  publisher={Springer Nature Switzerland},
  address={Cham},
  abstract="We propose a bilevel optimization model to support decision-making about locating, sizing and pricing of electric vehicle (EV) charging stations by taking into account the behaviour of EV users. This is done by adopting a preference-list or rank-based approach that characterizes users by a set of distinct ordered sets of predefined preferences or products. In the upper-level, the charging service provider is in charge of making decisions on the location, size and pricing of the charging stations to maximize its profit. In the lower level, EV users select their first available charging stations from their preference lists. We solve the bilevel optimization problem using a KKT-based single-level reformulation and provide extensive managerial insights on a randomly generated set of problem instances.",
  isbn="978-3-031-57603-4",
  doi-off="10.1007/978-3-031-57603-4_18", 
  url="https://doi.org/10.1007/978-3-031-57603-4_18"
}

@inproceedings{xiong16,
author = {Xiong, Yanhai and Gan, Jiarui and An, Bo and Miao, Chunyan and Soh, Yeng Chai},
title = {Optimal Pricing for Efficient Electric Vehicle Charging Station Management},
year = {2016},
month=may,
day={9-13},
isbn = {978-1-4503-4239-1},
publisher = {International Foundation for Autonomous Agents and Multiagent Systems},
address = {Richland, SC},
abstract = {The rapid development of Electric Vehicles (EVs) seen in recent years has been drawing increasing attentions from the public, markets, decision-makers, and academia. Notwithstanding the progress, issues still remain. Because of the widely complained disadvantages of limited battery capacity and long charging time, charging convenience has become a top concern that greatly hinders the adoption of EVs. Specialized EV charging station, which provides more than 10 times faster charging speed than domestic charging, is therefore a critical element for successful EV promotion.While most existing researches focus on optimizing spatial placement of charging stations, they are inflexible and inefficient against rapidly changing urban structure and traffic pattern. Therefore, this paper approaches the management of EV charging stations from the pricing perspective as a more flexible and adaptive complement to established charging station placement. In this paper, we build a realistic pricing model in consideration of residential travel pattern and {EV} drivers' self-interested charging behavior, traffic congestion, and operating expense of charging stations. We formulate the pricing problem as a mixed integer non-convex optimization problem, and propose a scalable algorithm to solve it. Experiments on both mock and real data are also conducted, which show scalability of our algorithm as well as our solution's significant improvement over existing approaches.},
booktitle = {Proceedings of the 2016 International Conference on Autonomous Agents \& Multiagent Systems},
editor = {Jonker, Catholijn M. and Marsella, Stacy and Thangarajah, John and Tuyls, Karl},
pages = {749--757},
numpages = {9},
keywords = {pricing, game theory, electric vehicle, charging station},
location = {Singapore, Singapore},
series = {AAMAS 2016}
}

@article{yang17,
author = {Yang, Xinan and Strauss, Arne K.},
year = {2017},
month = jun,
pages = {935-945},
title = {An Approximate Dynamic Programming Approach to Attended Home Delivery Management},
volume = {263},
number = {3},
journal = {Eur. J. Oper. Res.},
journal-off = {European Journal of Operational Research},
issn = {0377-2217},
url = {https://doi.org/10.1016/j.ejor.2017.06.034},
url-off = {https://www.sciencedirect.com/science/article/pii/S0377221717305738},
doi-off = {10.1016/j.ejor.2017.06.034},
keywords = {E-commerce, Revenue management, Dynamic programming},
abstract = {We propose a new method of controlling demand through delivery time slot pricing in attended home delivery management with a focus on developing an approach suitable for industry-scale implementation. To this end, we exploit a relatively simple yet effective way of approximating delivery costs by decomposing the overall delivery problem into a collection of smaller, area-specific problems. These cost estimations serve as inputs into an approximate dynamic programming method that provides estimates of the opportunity cost associated with having a customer from a specific area book delivery in a specific time slot. These estimates depend on the area and on the delivery time slot under consideration. Using real, large-scale industry data, we estimate a demand model including a multinomial logit model of customers’ delivery time slot choice, and show in simulation studies that we can improve profits by over two percent in all tested instances relative to using a fixed-price policy commonly encountered in e-commerce. These improvements are achieved despite making strong assumptions in estimating delivery cost. These assumptions allow us to reduce computational run-time to a level suitable for real-time decision making on delivery time slot feasibility and pricing. Our approach provides quantitative insight into the importance of incorporating expected future order displacement costs into opportunity cost estimations alongside marginal delivery costs.}
}

@inproceedings{zhang22,
author = {Zhang, Weijia and Liu, Hao and Han, Jindong and Ge, Yong and Xiong, Hui},
title = {Multi-Agent Graph Convolutional Reinforcement Learning for Dynamic Electric Vehicle Charging Pricing},
year = {2022},
isbn = {9781450393850},
editor = {Ma, Yao and Tang, Jiliang},
publisher = {Association for Computing Machinery},
address = {New York, NY, USA},
url = {https://doi.org/10.1145/3534678.3539416},
doi-off = {10.1145/3534678.3539416},
abstract = {Electric Vehicles (EVs) have been emerging as a promising low-carbon transport target. While a large number of public charging stations are available, the use of these stations is often imbalanced, causing many problems to Charging Station Operators (CSOs). To this end, in this paper, we propose a Multi-Agent Graph Convolutional Reinforcement Learning (MAGC) framework to enable CSOs to achieve more effective use of these stations by providing dynamic pricing for each of the continuously arising charging requests with optimizing multiple long-term commercial goals. Specifically, we first formulate this charging station request-specific dynamic pricing problem as a mixed competitive-cooperative multi-agent reinforcement learning task, where each charging station is regarded as an agent. Moreover, by modeling the whole charging market as a dynamic heterogeneous graph, we devise a multi-view heterogeneous graph attention networks to integrate complex interplay between agents induced by their diversified relationships. Then, we propose a shared meta generator to generate individual customized dynamic pricing policies for large-scale yet diverse agents based on the extracted meta characteristics. Finally, we design a contrastive heterogeneous graph pooling representation module to learn a condensed yet effective state action representation to facilitate policy learning of large-scale agents. Extensive experiments on two real-world datasets demonstrate the effectiveness of MAGC and empirically show that the overall use of stations can be improved if all the charging stations in a charging market embrace our dynamic pricing policy.},
booktitle = {Proceedings of the 28th ACM SIGKDD Conference on Knowledge Discovery and Data Mining},
pages = {2471–2481},
numpages = {11},
keywords = {charging station dynamic pricing, graph contrastive learning, graph neural networks, multi-agent reinforcement learning},
location = {Washington DC, USA},
series = {KDD '22}
}

\end{document}